\newtheorem{theorem}{Theorem}[section]
\newtheorem{corollary}[theorem]{Corollary}
\newtheorem{lemma}[theorem]{Lemma}
\theoremstyle{definition}
\newtheorem{definition}[theorem]{Definition}
\newtheorem{example}[theorem]{Example}
\newtheorem{remark}[theorem]{Remark}
\newtheorem*{ack}{Acknowledgments}
\newcommand{\Z}{\mathbb{Z}}
\newcommand{\Q}{\mathbb{Q}}
\newcommand{\C}{\mathbb{C}}
\renewcommand{\L}{\mathbb{L}}
\newcommand{\PP}{\mathbb{P}}
\newcommand{\CP}{\mathbb{CP}}
\DeclareMathAlphabet{\pazocal}{OMS}{zplm}{m}{n}
\newcommand{\YY}{{\pazocal Y}}
\newcommand{\RR}{{\mathcal R}}
\newcommand{\VV}{{\mathcal V}}
\newcommand{\F}{{\mathcal{F}}}
\newcommand{\cE}{{\mathcal{E}}}
\newcommand{\X}{{\mathcal{X}}}
\newcommand{\g}{{\mathfrak{g}}}
\newcommand{\h}{{\mathfrak{h}}}
\newcommand{\gl}{{\mathfrak{gl}}}
\renewcommand{\sl}{{\mathfrak{sl}}}
\newcommand{\sol}{{\mathfrak{sol}}}
\DeclareMathOperator{\im}{im}
\DeclareMathOperator{\id}{id}
\DeclareMathOperator{\ab}{{ab}}
\DeclareMathOperator{\GL}{GL}
\DeclareMathOperator{\SL}{SL}
\DeclareMathOperator{\PSL}{PSL}
\DeclareMathOperator{\OS}{OS}
\DeclareMathOperator{\Hom}{{Hom}}
\DeclareMathOperator{\ad}{ad}
\DeclareMathOperator{\ideal}{ideal}
\DeclareMathOperator{\orb}{orb}
\DeclareMathOperator{\abf}{abf}
\DeclareMathOperator{\Sol}{Sol}
\DeclareMathOperator{\Zero}{\mathbf{V}}
\newcommand{\surj}{\twoheadrightarrow}
\newcommand{\inj}{\hookrightarrow}
\newcommand\isom{\xrightarrow{
 \,\smash{\raisebox{-0.4ex}{\ensuremath{\scriptstyle\simeq}}}\,}}
\def\dot{\mathchar"013A}  
\newcommand{\hdot}{{\raise2pt\hbox to0.35em{\Huge $\dot$}}} 
\newcommand{\bwedge}{\mbox{\small $\bigwedge$}}
\newcommand{\oM}{\overline{M}}
\newcommand{\oC}{\overline{C}}
\newcommand{\Lie}{\textup{\texttt{Lie}}}
\newcommand{\dga}{\large{\textsc{cdga}}}
\definecolor{dkgreen}{RGB}{0,100,0}
\definecolor{dkbrown}{RGB}{139,69,19}
\numberwithin{equation}{section}
\begin{document}

\title[Rank $2$ jump loci of quasi-projective manifolds]{%
Rank two topological and infinitesimal embedded \\ jump loci of
quasi-projective manifolds}

\author[Stefan Papadima]{Stefan Papadima$^1$}
\address{Simion Stoilow Institute of Mathematics, 
P.O. Box 1-764,
RO-014700 Bucharest, Romania}
\email{\href{mailto:Stefan.Papadima@imar.ro}{Stefan.Papadima@imar.ro}}
\thanks{$^1$Work partially supported by the Romanian Ministry 
of Research and Innovation, CNCS--UEFISCDI, grant
PN-III-P4-ID-PCE-2016-0030, within PNCDI III}

\author[Alexander~I.~Suciu]{Alexander~I.~Suciu$^2$}
\address{Department of Mathematics,
Northeastern University,
Boston, MA 02115, USA}
\email{\href{mailto:a.suciu@northeastern.edu}{a.suciu@northeastern.edu}}
\urladdr{\href{http://web.northeastern.edu/suciu/}%
{web.northeastern.edu/suciu/}}
\thanks{$^2$Partially supported by the Simons Foundation 
collaboration grant for mathematicians 354156}

\subjclass[2010]{Primary
14F35,  
55N25. 
Secondary
20C15,  
55P62.  
}

\keywords{Representation variety, variety of flat connections, cohomology jump loci, 
analytic germ, differential graded algebra model, quasi-projective manifold, 
admissible map, Deligne weight filtration, holonomy Lie algebra, semisimple Lie algebra}

\begin{abstract}
We study the germs at the origin of $G$-representation varieties  
and the degree $1$ cohomology jump loci of fundamental groups of  
quasi-projective manifolds.  Using the Morgan--Dupont model 
associated to a convenient compactification of such a manifold, 
we relate these germs to those of their infinitesimal  counterparts, 
defined in terms of flat connections on those models. When the 
linear algebraic group $G$ is either $\SL_2(\C)$ or its standard Borel 
subgroup and the depth of the jump locus is $1$, this dictionary works 
perfectly, allowing us to describe in this way explicit irreducible 
decompositions for the germs of these embedded jump loci. On the 
other hand, if either $G=\SL_n(\C)$ for some $n\ge 3$, or the depth 
is greater than $1$, then certain natural inclusions of germs are strict. 
\end{abstract}

\maketitle
\setcounter{tocdepth}{1}
\tableofcontents
\newpage
\section{Introduction and statement of results}
\label{sect:intro}

\subsection{Representation varieties and cohomology jump loci}
\label{subsec:intro1}

Let $X$ be a path-connected space having the homotopy 
type of a finite CW-complex, and let $G$ be a complex, 
linear algebraic group. The representation 
variety $\Hom(\pi_1(X),G)$ is the parameter space for locally 
constant sheaves on $X$ whose monodromy factors through  $G$. 
The {\em characteristic varieties}\/ of $X$ with respect to a 
rational representation $\iota\colon G\to \GL(V)$ are the jump loci
\begin{equation}
\label{eq:jumps}
\VV^i_r(X, \iota)=
\{ \rho \in \Hom(\pi_1(X),G) \mid \dim H^i(X,  V_{\iota \circ \rho}) \ge r \},
\end{equation}
defined for each degree $i\ge 0$ and depth $r\ge 0$.  These loci,  
which record the variation of twisted cohomology inside the parameter space, 
encode subtle information about the topology of $X$ and its covering spaces.  
We focus here mainly on the degree $1$ jump loci, which depend only on the 
fundamental group $\pi_1(X)$ and the representation $\iota$.

We work throughout over $\C$, unless otherwise mentioned. 
For an affine variety $\X$, we denote by $\X_{(x)}$ the analytic germ of $\X$ 
at a point $x\in \X$.  Affine varieties and analytic germs are always reduced.

The study of analytic germs of embedded cohomology jump loci is a basic problem in 
deformation theory with homological constraints.  Building on the foundational 
work of Goldman and Millson \cite{GM}, it was shown in \cite{DP-ccm} that 
the germs at the origin $1\in \Hom(\pi_1(X),G)$ of those loci are 
isomorphic to the germs at the origin of embedded infinitesimal jump loci
of a commutative differential graded algebra (for short, $\dga$) that is a 
finite model for the space $X$.    In \cite{BW}, Budur and Wang 
extended this result away from the origin, by developing a theory of 
differential graded Lie algebra modules which control the corresponding 
deformation problem. 

The case when $G=\SL_2(\C)$  has received a lot 
of attention in the literature. For instance, results from \cite{PS-17} reveal 
an unexpected connection between $\SL_2(\C)$ representation varieties 
and the monodromy action on the homology of Milnor fibers 
of central hyperplane arrangements: for a line arrangement in $\CP^2$, 
combinatorial information (namely the nonexistence of points of intersection 
with multiplicity properly divisible by $3$) implies the fact that all  roots of 
the characteristic polynomial of the monodromy action on the first homology
of the Milnor fiber of order a power of $3$ have multiplicity at most $2$ (a delicate 
topological property). The proof from \cite[\S7.3]{PS-17} uses a construction 
related to the irreducible decomposition of the analytic germ 
$\Hom(\pi_1(X), \SL_2(\C))_{(1)}$, where $X$ is the arrangement complement. 
On the other hand, the universality theorem of Kapovich and Millson \cite{KM} 
shows that rank $2$ representation varieties may have arbitrarily bad singularities 
away from $1$. This lead us to focus on germs at the origin of such varieties, 
and look for explicit descriptions via infinitesimal $\dga$ methods.

\subsection{Flat connections and resonance varieties}
\label{subsec:intro2}

The infinitesimal analogue of the $G$-represen\-tation variety 
around the origin is the set $\F(A,\g)$ of $\g$-valued flat connections on 
a commutative, differential, positively-graded $\C$-algebra $A$, where $\g$ is the 
Lie algebra of the Lie group $G$.  This set  consists of all elements 
$\omega \in A^1\otimes \g$ which satisfy the Maurer--Cartan equation, 
$d\omega +\frac{1}{2}[\omega,\omega]=0$.  If 
$A^1$ is finite dimensional, then the set of flat connections is a Zariski-closed
subset of the affine space $A^1\otimes \g$.  Furthermore, $\F(A,\g)$ contains 
the closed subvariety $\F^1(A,\g)$ consisting of all tensors of the form 
$\eta \otimes g$ with $d \eta=0$. 

To define the infinitesimal counterpart of the jump loci \eqref{eq:jumps}, 
let $\theta\colon \g\to \gl(V)$ be a finite-dimensional representation.  
For each $\g$-valued flat connection $\omega$, there is an associated 
covariant derivative, $d_{\omega}\colon A^{\hdot}\otimes V\to 
A^{\hdot+1}\otimes V$, given by $d_{\omega}=d\otimes \id_V +\ad_{\omega}$,   
and satisfying $d_{\omega}^2=0$. 
The {\em resonance varieties}\/ of $A$ with respect to $\theta$ are the sets 
\begin{equation}
\label{eq:resonant}
\RR^i_r(A,\theta)=
\{ \omega \in \F(A,\g) \mid \dim H^i(A\otimes V, d_{\omega})\ge r \}. 
\end{equation} 

If $A$ is finite-dimensional, then these sets are Zariski-closed in $\F(A,\g)$. 
Furthermore, if $H^i(A)\ne 0$, then $\RR^i_1(A,\theta)$ 
contains the closed subvariety $\Pi(A,\theta)$  
consisting of all elements $\eta \otimes g\in \F^1(A,\g)$ 
with $\det \theta(g)=0$. 

\subsection{Quasi-K\"{a}hler manifolds and admissible maps}
\label{subsec:intro3}
We now turn our attention to a class of spaces for which the characteristic 
varieties are constrained by some powerful structural results. 
Let $M$ be a quasi-K\"{a}hler manifold, that is, the complement 
of a normal crossing divisor $D$ in a compact, connected  K\"{a}hler 
manifold $\oM$.  A map $f\colon M \to C$ from such a manifold to a 
smooth complex curve $C$ is said to be {\em admissible}\/ if $f$ is 
holomorphic and surjective, and $f$ admits a holomorphic, surjective 
extension between suitable compactifications, $\bar{f}\colon \oM \to \oC$, 
such that all the fibers of $\bar{f}$ are connected. 

As shown by Arapura in \cite{Ar}, there exists a finite set $\cE(M)$ of equivalence 
classes of `admissible' maps from $M$ to smooth curves of negative Euler 
characteristic, up to reparametrization in the target.  
For each such map 
$f\colon M\to M_f$, we denote by $f_{\sharp}\colon \pi\to \pi_f$ the 
induced homomorphism on fundamental groups; the admissibility 
condition insures that $f_{\sharp}$ is surjective. 
Let $\abf \colon \pi \surj \pi_{\abf}$ be the projection of the group 
$\pi$ onto its maximal torsion-free abelian quotient. 
We will denote by $f_0 \colon M \to K(\pi_{\abf}, 1)$ 
the corresponding classifying map, which is determined 
up to homotopy by the property that $(f_0)_{\sharp}= \abf$.   
Furthermore, we will write $E(M)=\cE(M)\cup \{ f_0\}$.

Let $G$ be a complex linear algebraic group, 
let $\iota\colon G\to \GL(V)$ be a rational  representation, and let 
$\theta \colon \g \to \gl(V)$ be its tangential representation.  
For all $r\ge 0$, we have inclusions
\begin{equation}
\label{eq:vincldeg1-intro}
\VV^1_r(\pi,\iota) \supseteq \bigcup_{f\in E(M)} f_{\sharp}^{*} \VV^1_r (\pi_f,\iota),
\end{equation}
where $f_{\sharp}^{*}\colon \Hom (\pi_f,G) \to \Hom (\pi,G)$ denotes the induced 
morphism on representation varieties. For $r=0$ and $1$, the inclusions from 
\eqref{eq:vincldeg1-intro} are equivalent to the two inclusions
\begin{align}
\label{eq:repincl-intro}
\Hom(\pi,G) &\supseteq  \abf^{*} \Hom (\pi_{\abf},G) \cup 
\bigcup_{f\in \cE(M)} f_{\sharp}^{*} \Hom (\pi_f,G),
\\
\label{eq:vincl-intro}
\VV^1_1(\pi,\iota) &\supseteq \abf^{*} \VV^1_1 (\pi_{\abf},\iota) \cup 
\bigcup_{f\in \cE(M)} f_{\sharp}^{*} \Hom (\pi_f,G).
\end{align}

The case when $b_1(M)=0$ is trivial, since $\Hom(\pi,G)_{(1)}=\{ 1\}$, while 
both $\VV^1_1(\pi,\iota)_{(1)}$ and $\cE(M)$ are empty in that situation. 
So, it is harmless to assume that $b_1(M)>0$.

In the rank $1$ case, i.e., the case when $G=\C^{\times}$ 
and $\iota$ identifies $\C^{\times}$ with $\GL_1(\C)$, equality near 
$1$ in \eqref{eq:vincl-intro} holds, by a deep result of 
Arapura \cite{Ar} on the structure of $\VV^1_1 (\pi,\iota)_{(1)}$.  
In particular, every nontrivial character $\rho\in \Hom( \pi, \C^{\times})$ 
sufficiently close to the trivial character and such that 
$H^1(\pi, \C_{\rho})\ne 0$ must belong to 
$f_{\sharp}^{*} \Hom (\pi_f,\C^{\times})$, for some $f\in \cE(M)$. 
For a more general treatment of factorization results of this 
nature we refer to the book by  Zuo \cite{Zu} and to 
the recent work of Campana, Claudon, and 
Eyssidieux \cite{CCE14, CCE15}. 

\subsection{Quasi-projective manifolds and transversality}
\label{subsec:intro4}
We specialize now to the case when $M$ is a connected, smooth 
quasi-projective variety (for short, a quasi-projective manifold). 
Let $(\oM,D)$ be a convenient compactification of $M$, where 
$\oM$ is now a projective manifold, and $D$ is a union of smooth 
hypersurfaces, intersecting locally like hyperplanes.  
Work of Morgan \cite{Mo}, as recently sharpened  
by Dupont in \cite{Du}, associates to these data a bigraded, 
rationally defined $\dga$, $A=\OS(\overline{M},D)$, 
called the Orlik--Solomon model of $M$.  This $\dga$ 
is a finite model of $M$, i.e., it is connected ($A^0=\C$),   
finite-dimensional as a $\C$-vector space,  
and weakly equivalent to the de Rham algebra of $M$. 
Furthermore, $A$  is functorial with respect to regular 
morphisms of pairs $(\oM,D)$ as above. 

For an admissible map $f\colon M\to M_f$, 
we will denote by $\Phi_f\colon A_f \to A$ the induced $\dga$ map 
between the respective Orlik--Solomon models. Let $ \Phi_f^{*}\colon  \F(A_f,\g) \to \F(A,\g)$
be the morphism induced by $\Phi_f$ between the respective varieties of flat connections.
Assuming as before that $b_1(M)>0$, we obtain the following infinitesimal 
counterparts of inclusions \eqref{eq:repincl-intro}--\eqref{eq:vincl-intro}:
\begin{align}
\label{eq:flatincl-intro}
\F(A,\g) &\supseteq \F^1(A,\g)\cup 
\bigcup_{f\in \mathcal{E}(M)}  \Phi_f^{*} \F(A_f,\g),
\\
\label{eq:rincl-intro}
\RR^1_1(A,\theta) &\supseteq  \Pi(A,\theta)\cup 
\bigcup_{f\in \mathcal{E}(M)}  \Phi_f^{*} \F(A_f,\g).
\end{align}

This brings us to our first result, which can be viewed as a `transversality' theorem 
for the subvarieties which appear on the right-hand side of inclusions 
\eqref{eq:repincl-intro}--\eqref{eq:rincl-intro}.  
This result  summarizes Theorems \ref{thm:nonabtrans} and \ref{thm:toptrans}, 
and will be proved in Sections \ref{subsec:trans} and \ref{subsec:trans-bis}.

\begin{theorem}
\label{thm:tran}
Let $M$ be a quasi-K\"{a}hler manifold, and 
let $f,g\in \cE(M)$ be two distinct admissible maps.  
\begin{enumerate}
\item \label{fg1}
If $M$ is a quasi-projective manifold, then 
\[
\Phi_f^{*} \F(A_f,\g) \cap \Phi_g^{*} \F(A_g,\g)= \{ 0\}.
\]
\item \label{fg2}
If $M$ is either a compact, connected K\"{a}hler manifold or the complement 
of a central complex hyperplane arrangement, then
\[
f_{\sharp}^{*} \Hom (\pi_f,G)_{(1)} \cap g_{\sharp}^{*} \Hom (\pi_g,G)_{(1)} = \{ 1\}.
\]
\end{enumerate}
\end{theorem}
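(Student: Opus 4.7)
My plan is to treat part (\ref{fg1}) first at the infinitesimal level using the Orlik--Solomon models, and then derive part (\ref{fg2}) by transporting the conclusion to the representation varieties via formality.

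\smallskip

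\noindent\textbf{Part (\ref{fg1}).} Suppose $\omega \in \Phi_f^{*} \F(A_f,\g) \cap \Phi_g^{*} \F(A_g,\g)$, so that $\omega = \Phi_f^{*}(\omega_f) = \Phi_g^{*}(\omega_g)$ for flat connections $\omega_f \in A_f^1\otimes \g$ and $\omega_g \in A_g^1\otimes \g$. Choosing a basis $\{e_{\alpha}\}$ of $\g$ and expanding $\omega = \sum_{\alpha} \omega^{\alpha}\otimes e_{\alpha}$, each component $\omega^{\alpha} \in A^1$ lies in $\Phi_f^{*}(A_f^1) \cap \Phi_g^{*}(A_g^1)$. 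The problem therefore reduces to the purely linear-algebraic statement
\[
\Phi_f^{*}(A_f^1) \cap \Phi_g^{*}(A_g^1) = 0 \quad \text{inside } A^1.
\]
I would establish this identity by combining two ingredients. First, by functoriality of the Orlik--Solomon construction with respect to admissible compactifications, the maps $\Phi_f$, $\Phi_g$ are controlled at the level of weight-graded pieces, and for a smooth curve $C$ of negative Euler characteristic the weight filtration on $A^1(C)$ identifies $A_f^1$ with a concrete subspace of $H^1(M_f;\C)$. Second, the admissibility hypothesis (surjectivity of $\bar f$ and $\bar g$ together with connected fibres) forces $\Phi_f^{*}$ and $\Phi_g^{*}$ to inject on $A^1$. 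Once these are in place, the desired transversality reduces, at the level of $H^1$, to the classical fact due to Catanese and Arapura: two distinct pencils onto curves of negative Euler characteristic cannot share a nonzero pull-back one-form on $M$, since otherwise they would factor through a common admissible map, contradicting the fact that $f$ and $g$ represent distinct classes in $\cE(M)$.

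\smallskip

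\noindent\textbf{Part (\ref{fg2}).} Both classes of spaces in part (\ref{fg2}) are known to be $1$-formal: for compact K\"{a}hler manifolds the de Rham algebra is formal with model $H^*(M;\C)$ (Deligne--Griffiths--Morgan--Sullivan), while for a central hyperplane arrangement the Orlik--Solomon algebra is formal (Brieskorn). In both settings, the Goldman--Millson theory, in the form used by Dupont--Papadima--Suciu, produces an analytic isomorphism of germs $\Hom(\pi,G)_{(1)} \isom \F(A,\g)_{(0)}$ that is natural with respect to admissible maps and thus intertwines $f_{\sharp}^{*}$ with $\Phi_f^{*}$ and $g_{\sharp}^{*}$ with $\Phi_g^{*}$. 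Applying this isomorphism to both sides converts the desired intersection $f_{\sharp}^{*}\Hom(\pi_f,G)_{(1)} \cap g_{\sharp}^{*}\Hom(\pi_g,G)_{(1)}$ into $\Phi_f^{*}\F(A_f,\g)_{(0)} \cap \Phi_g^{*}\F(A_g,\g)_{(0)}$. In the arrangement case this is covered directly by part (\ref{fg1}), while in the compact K\"{a}hler case one runs the same argument with the cohomology algebra playing the role of the Orlik--Solomon model (there is no boundary divisor, and the transversality argument goes through unchanged).

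\smallskip

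\noindent\textbf{Main obstacle.} The cornerstone is the separation identity $\Phi_f^{*}(A_f^1) \cap \Phi_g^{*}(A_g^1) = 0$. The essential difficulty is to lift the geometric ``distinct pencils are independent'' principle from $H^1$ to the Orlik--Solomon $A^1$, which involves pieces of weights $1$ and $2$ of the mixed Hodge structure. This requires checking that a shared element of weight~$2$ in $A^1$ would also force a factorisation of $f$ and $g$ through a common admissible map. Once this key linear-algebraic identity is in hand, together with the naturality of the Goldman--Millson--Dupont isomorphism, both parts of the theorem follow by formal unpacking of the definitions.
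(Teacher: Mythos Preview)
Your overall strategy for Part~(\ref{fg2}) is close to the paper's: use formality to transport the question to flat connections, then invoke the infinitesimal transversality. The paper does exactly this via the \emph{uniform formality} machinery of \cite{PS-16}, which supplies the natural isomorphism you need; note that Remark~\ref{rem:nonnat} warns that the naturality from \cite{DP-ccm} alone fails in the non-abelian case, so the correct reference matters. Once the natural identification is in place, the paper observes that in these two cases $A^1=H^1(M)$, so the required transversality is literally $\im H^1(f)\cap \im H^1(g)=0$, which is the rank-one fact from \cite{DPS-duke}.

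For Part~(\ref{fg1}), however, there is a genuine gap. You reduce to the purely linear claim $\Phi_f^{*}(A_f^1)\cap \Phi_g^{*}(A_g^1)=0$, which is \emph{strictly stronger} than what is needed and which you do not prove; your ``main obstacle'' paragraph is exactly where the argument stalls. The paper does \emph{not} establish this linear statement and instead avoids it entirely by exploiting the Maurer--Cartan equation. The key observation (Lemma~\ref{lem:oscurve}) is that in the weight decomposition $\Omega=\Omega_1+\Omega_2$ of a flat connection, the weight-$2$ component of the flatness equation reads $\partial\Omega_2+\tfrac12[\Omega_1,\Omega_1]=0$; hence if $\Omega_1=0$ then $\Omega_2$ is \emph{closed}. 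The argument then proceeds in two passes: the weight-$1$ parts are automatically closed, so the rank-one $H^1$-transversality kills them; with $\Omega_1=0$, the lemma forces the weight-$2$ parts to be closed as well, and the same $H^1$-transversality finishes. Your attempt to lift the pencil-separation principle from $H^1$ to all of $A^1$ by a factorisation argument is not carried out, and it is unclear that a weight-$2$ coincidence in $A^1$ (with no flatness imposed) would yield any geometric factorisation. The moral is that flatness is not incidental here: it is precisely what lets you bootstrap from $H^1$-level transversality to the full statement without ever needing $A^1$-level transversality.
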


In the rank $1$ case, part \eqref{fg2} of the theorem also follows from results 
in \cite{DPS-duke}.  Moreover, if $M$ is an arrangement complement, an 
equivalent statement can be found in \cite{LY}, again only in the rank $1$ case. 
The novelty here is that a completely analogous statement holds for arbitrary 
complex linear algebraic groups $G$.

\subsection{Topological versus infinitesimal factorizations}
\label{subsec:intro5}

Our main goal in this paper is to analyze the decomposition 
into irreducible components of the germs at $1$ of the embedded jump loci 
$(\Hom(\pi,G), \VV^1_1(\pi, \iota))$ and the germs at $0$ of their infinitesimal 
analogues, $(\F(A,\g), \RR^1_1(A,\theta))$, in the case when $\pi$ is the 
fundamental group of a quasi-projective manifold. 

A key step in this direction is the next theorem, which 
establishes a very strong connection between equalities in 
\eqref{eq:repincl-intro}--\eqref{eq:rincl-intro}, and opens the way for using infinitesimal
computations to derive factorization results near $1$.

\begin{theorem}
\label{thm:main}
Let $M$ be quasi-projective manifold  with $b_1(M)>0$.  
For an arbitrary rational representation of $G=\SL_2(\C)$ 
or its standard Borel subgroup $\Sol_2(\C)$, 
the following statements are equivalent.
\begin{enumerate}
\item \label{m1}
The inclusion \eqref{eq:repincl-intro} becomes an equality near $1$.
\item \label{m2}
Both \eqref{eq:repincl-intro} and \eqref{eq:vincl-intro} become 
equalities near $1$.
\item \label{m3}
The inclusion \eqref{eq:flatincl-intro} is an equality, for some 
convenient compactification of $M$.
\item \label{m4}
Both \eqref{eq:flatincl-intro} and \eqref{eq:rincl-intro} are equalities, 
for any convenient compactification of $M$.
\end{enumerate}
\end{theorem}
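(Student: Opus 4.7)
The plan is to use the Orlik--Solomon model $A=\OS(\overline{M},D)$ as a bridge between the topological data (representation variety and characteristic variety) and the infinitesimal data (flat connections and resonance variety), and then to reduce the four-way equivalence to a single core implication. Both \eqref{m2}$\Rightarrow$\eqref{m1} and \eqref{m4}$\Rightarrow$\eqref{m3} are immediate. The backbone of the reduction is the Goldman--Millson / \cite{DP-ccm} analytic isomorphism of germs
\[
(\Hom(\pi,G),\VV^1_1(\pi,\iota))_{(1)} \isom (\F(A,\g),\RR^1_1(A,\theta))_{(0)},
\]
which I would enhance, via functoriality of the Orlik--Solomon construction with respect to admissible maps and the abelianization, to match the admissible-map subvarieties on both sides: $f_\sharp^*\Hom(\pi_f,G)$ with $\Phi_f^*\F(A_f,\g)$, and $\abf^*\Hom(\pi_\abf,G)$ with $\F^1(A,\g)$, with analogous correspondences on the jump-loci side. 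The weight grading on the Morgan--Dupont model makes the Maurer--Cartan equation weight-homogeneous, so germ equality at the origin promotes to global equality of the Zariski-closed loci in question; independence of \eqref{m4} from the choice of convenient compactification follows from quasi-isomorphism invariance of finite models. This yields \eqref{m1}$\Leftrightarrow$\eqref{m3} and \eqref{m2}$\Leftrightarrow$\eqref{m4}.

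It then remains to establish the key implication \eqref{m3}$\Rightarrow$\eqref{m4}. Assuming \eqref{m3}, I would intersect both sides of \eqref{eq:flatincl-intro} with $\RR^1_1(A,\theta)$ and verify two things. First, $\F^1(A,\g)\cap \RR^1_1(A,\theta)=\Pi(A,\theta)$: for a decomposable flat connection $\omega=\eta\otimes g$ with $d\eta=0$, the covariant derivative $d_\omega$ on $A\otimes V$ splits into blocks along the generalized eigenspaces of $\theta(g)$, and an explicit Koszul-style calculation of $H^1$ shows that it is nonzero precisely when $\theta(g)$ has $0$ as an eigenvalue, i.e., $\det\theta(g)=0$. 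Second, $\Phi_f^*\F(A_f,\g)\subseteq\RR^1_1(A,\theta)$ for every $f\in\cE(M)$: this is a propagation statement, reflecting that the curve $M_f$ has negative Euler characteristic and hence $H^1(A_f\otimes V,d_{\omega_f})\ne 0$ at every flat connection $\omega_f$, and the surjectivity of $\Phi_f$ on degree one inherits this to $A$. Both arguments are uniform across $G=\SL_2(\C)$ and $G=\Sol_2(\C)$.

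The main obstacle I anticipate is upgrading the \cite{DP-ccm} dictionary from the ambient pairs $(\Hom,\VV^1_1)$ and $(\F,\RR^1_1)$ to a subvariety-compatible form, matching each admissible-map piece on the left with its infinitesimal counterpart on the right. This requires intertwining the Goldman--Millson deformation functor with the CDGA maps $\Phi_f$ coming from Morgan--Dupont functoriality, and pushing the equivalence through the Koszul-type duality controlling rank two flat connections. Here the rank-$2$ hypothesis on $G$ is essential: for $\SL_2(\C)$ and its standard Borel the abelian and curve-induced components account for all irreducible components of $\F(A,\g)_{(0)}$, while for higher rank extra components arise, as the abstract anticipates. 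A subsidiary point is excluding non-conical components of $\F(A,\g)$ near the origin, which is handled by the strong formality properties of quasi-projective manifolds supplied by Morgan \cite{Mo} and Dupont \cite{Du}.
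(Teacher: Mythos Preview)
Your overall architecture is sensible, but the central step has a genuine gap. You propose to obtain \eqref{m1}$\Leftrightarrow$\eqref{m3} and \eqref{m2}$\Leftrightarrow$\eqref{m4} by upgrading the \cite{DP-ccm} isomorphism
\[
(\Hom(\pi,G),\VV^1_1(\pi,\iota))_{(1)}\;\cong\;(\F(A,\g),\RR^1_1(A,\theta))_{(0)}
\]
to a \emph{natural} isomorphism identifying each $f_\sharp^*\Hom(\pi_f,G)_{(1)}$ with $\Phi_f^*\F(A_f,\g)_{(0)}$ and $\abf^*\Hom(\pi_{\abf},G)_{(1)}$ with $\F^1(A,\g)_{(0)}$. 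That naturality is exactly what is \emph{not} available here: the argument from \cite[Thm.~B(2)]{DP-ccm} establishing naturality works only when $\g$ is abelian, and it is known to break down for $G=\SL_2(\C)$ and $\Sol_2(\C)$; see Remark~\ref{rem:nonnat}. You flag this as ``the main obstacle,'' but your plan offers no mechanism to overcome it beyond the functoriality of the Orlik--Solomon construction, which produces the maps $\Phi_f$ on the $\dga$ side but does not make the Goldman--Millson/Kuranishi local isomorphism commute with them.

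The paper bypasses this obstruction entirely. Instead of matching the pieces by naturality, it proves that each of the finitely many subgerms on the right of \eqref{eq:repincl-intro}--\eqref{eq:rincl-intro} is irreducible and computes their dimensions (Lemmas~\ref{lem:flatirr} and \ref{lem:moreirr}), checks non-redundancy (\S\ref{subsec:nonred}--\S\ref{subsec:red}), and then invokes the elementary combinatorial Lemma~\ref{lem:irrtrick}: if a germ $X$ is covered by irreducible pieces $X_i$ and also contains irreducible, pairwise non-nested pieces $Y_i$ with $\dim Y_i=\dim X_i$, then after a permutation $Y_i=X_i$. This forces the unions on the topological and infinitesimal sides to coincide with the ambient germ simultaneously, yielding \eqref{m3}$\Rightarrow$\eqref{m2} (Lemmas~\ref{lem:main step1}--\ref{lem:main step2}) and \eqref{m1}$\Rightarrow$\eqref{m3} (Lemma~\ref{lem:main step3}); a separate treatment is needed for the redundant case $b_1(M_f)=b_1(M)$, handled via Lemma~\ref{lem:tori} and the Hopfian Lemma~\ref{lem:p3}. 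Your proposed \eqref{m3}$\Rightarrow$\eqref{m4} on the purely infinitesimal side is close in spirit to Corollary~\ref{cor:1to2}, though the precise equality $\F^1(A,\g)\cap\RR^1_1(A,\theta)=\Pi(A,\theta)$ you assert requires justification and is not what the paper uses; it instead appeals to \cite[Prop.~4.1]{BMPP}.
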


This theorem, which will be proved in \S\ref{subsec:pfmain}, 
provides a topological interpretation for Question 8.4 from \cite{PS-15}, 
which asks whether statement \eqref{m4} from above always holds.  

\subsection{Irreducible decompositions for germs of embedded jump loci}
\label{subsec:intro6}

The next theorem, which will be proved in \S\ref{subsec:pfirr}, 
is our main result regarding the irreducible decomposition around 
the origin of the rank $2$ topological and infinitesimal embedded jump loci 
of quasi-projective manifolds. 

\begin{theorem}
\label{thm:irrtran}
With notation as above, suppose the equivalent properties 
from Theorem \ref{thm:main} are satisfied.  
\begin{enumerate}
\item \label{t2}
If $b_1(M_f)\ne b_1(M)$ for all $f\in \cE(M)$, then we have the following 
decompositions into irreducible components of analytic germs:
\begin{align}
\label{eq:repincl-irr}
\Hom (\pi,G)_{(1)} &= \abf^{*} \Hom (\pi_{\abf},G)_{(1)} \cup 
\bigcup_{f\in \cE(M)} f_{\sharp}^{*} \Hom (\pi_f,G)_{(1)}, 
\\
\label{eq:vincl-irr}
\VV^1_1 (\pi,\iota)_{(1)} &= \abf^{*} \VV^1_1 (\pi_{\abf},\iota)_{(1)} \cup 
\bigcup_{f\in \cE(M)} f_{\sharp}^{*} \Hom (\pi_f,G)_{(1)},
\\
\label{eq:flatincl-irr}
\F(A,\g)_{(0)} &= \F^1(A,\g)_{(0)} \cup \bigcup_{f\in \mathcal{E}(M)}  
\Phi_f^{*} \F(A_f,\g)_{(0)},
\\
\label{eq:rincl-irr}
\RR^1_1(A,\theta)_{(0)} &=\Pi(A,\theta)_{(0)} \cup 
\bigcup_{f\in \mathcal{E}(M)} \Phi_f^{*} \F(A_f,\g)_{(0)}.
\end{align}

\item  \label{t3}
If $b_1(M_f)= b_1(M)$ for some $f\in \cE(M)$, then we have the 
following equalities of irreducible germs:
\begin{align}
\label{eq:repincl-spec}
\Hom (\pi,G)_{(1)} &= f_{\sharp}^{*} \Hom (\pi_f,G)_{(1)}, 
\\
\label{eq:vincl-spec}
\VV^1_1 (\pi,\iota)_{(1)} &= f_{\sharp}^{*} \Hom (\pi_f,G)_{(1)},
\\
\label{eq:flatincl-spec}
\F(A,\g)_{(0)} &=  \Phi_f^{*} \F(A_f,\g)_{(0)},
\\
\label{eq:rincl-spec}
\RR^1_1(A,\theta)_{(0)} &=\Phi_f^{*} \F(A_f,\g)_{(0)}.
\end{align}

\item  \label{t1}
For any two distinct admissible maps $f,g\in \cE(M)$, 
\[
f_{\sharp}^{*} \Hom (\pi_f,G)_{(1)} \cap g_{\sharp}^{*} \Hom (\pi_g,G)_{(1)} = \{ 1\}.
\]
\end{enumerate}
\end{theorem}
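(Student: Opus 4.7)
My plan is to prove everything infinitesimally and then transfer to the topological side using the germ dictionary underlying Theorem~\ref{thm:main}. Under the standing hypothesis, an analytic isomorphism of embedded germs $(\Hom(\pi,G)_{(1)},\VV^1_1(\pi,\iota)_{(1)})\cong(\F(A,\g)_{(0)},\RR^1_1(A,\theta)_{(0)})$ carries $\abf^{*}\Hom(\pi_{\abf},G)_{(1)}$ onto $\F^1(A,\g)_{(0)}$ and each $f_{\sharp}^{*}\Hom(\pi_f,G)_{(1)}$ onto $\Phi_f^{*}\F(A_f,\g)_{(0)}$, and the four inclusions \eqref{eq:repincl-intro}--\eqref{eq:rincl-intro} become equalities of germs. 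Part~\eqref{t1} then falls out of Theorem~\ref{thm:tran}\eqref{fg1}: the infinitesimal intersection $\{0\}$ transports to the topological intersection $\{1\}$ through the dictionary.

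For part~\eqref{t2}, what remains after invoking the hypothesis is to show each listed germ is irreducible and that no two are comparable. Irreducibility is standard: $\Hom(\pi_{\abf},G)\cong G^{b_1(M)}$ is irreducible since $G$ is connected, and $\Hom(\pi_f,G)$ is irreducible because $\pi_f$ is either a free group or an orientable (possibly punctured) surface group with $G\in\{\SL_2(\C),\Sol_2(\C)\}$; correspondingly $\F(A_f,\g)$ is either the vector space $A_f^1\otimes\g$ or a Maurer--Cartan quadric hypersurface inside it; $\F^1(A,\g)$ is the Segre cone image of the irreducible variety $Z^1(A)\times\g$; and $\Pi(A,\theta)$ is the analogous image of $Z^1(A)\times\{X\in\g:\det\theta(X)=0\}$, whose second factor is irreducible for the rank-two representations at hand. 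For incomparability, Theorem~\ref{thm:tran}\eqref{fg1} immediately rules out any inclusion $\Phi_f^{*}\F(A_f,\g)_{(0)}\subseteq\Phi_g^{*}\F(A_g,\g)_{(0)}$ with $f\ne g$, since such an inclusion would collapse a positive-dimensional germ to $\{0\}$. The Betti-number hypothesis then separates the $f$-pieces from $\F^1(A,\g)_{(0)}$: an inclusion $\F^1(A,\g)_{(0)}\subseteq\Phi_f^{*}\F(A_f,\g)_{(0)}$ would force the closed 1-forms $Z^1(A)$ to lie inside $\Phi_f(Z^1(A_f))$, whose dimension is at most $b_1(M_f)<b_1(M)$.

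For part~\eqref{t3}, when $b_1(M_f)=b_1(M)$ for some $f\in\cE(M)$, the induced surjection $(f_{\sharp})_{\abf}\colon\pi_{\abf}\to(\pi_f)_{\abf}$ of torsion-free abelian groups of equal rank is an isomorphism; consequently every abelian representation of $\pi$ factors through $\pi_f$, yielding $\abf^{*}\Hom(\pi_{\abf},G)_{(1)}\subseteq f_{\sharp}^{*}\Hom(\pi_f,G)_{(1)}$, and infinitesimally the induced map $\Phi_f$ becomes an isomorphism on $H^1$ so every decomposable closed connection $\eta\otimes X$ lifts to $\tilde\eta\otimes X\in\F(A_f,\g)$ and $\F^1(A,\g)_{(0)}\subseteq\Phi_f^{*}\F(A_f,\g)_{(0)}$. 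If another admissible map $h\in\cE(M)\setminus\{f\}$ existed, the union equality would place $h_{\sharp}^{*}\Hom(\pi_h,G)_{(1)}$ inside $f_{\sharp}^{*}\Hom(\pi_f,G)_{(1)}$; by part~\eqref{t1} the intersection is $\{1\}$, contradicting positive-dimensionality. Thus $\cE(M)=\{f\}$ and the four equalities of part~\eqref{t3} collapse directly from the union equalities.

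The most delicate step is the distinctness argument in part~\eqref{t2} separating $\Phi_f^{*}\F(A_f,\g)_{(0)}$ from $\F^1(A,\g)_{(0)}$. The Betti-number inequality controls one direction, but the reverse --- showing $\Phi_f^{*}\F(A_f,\g)_{(0)}\not\subseteq\F^1(A,\g)_{(0)}$ --- demands explicit information about $\Phi_f$: for a curve $M_f$ of negative Euler characteristic the Morgan--Dupont model $A_f$ carries non-closed logarithmic 1-forms whose images in $\Phi_f(A_f^1)$ lie outside $Z^1(A)$, producing flat connections in $\Phi_f^{*}\F(A_f,\g)$ not of the decomposable closed form $\eta\otimes X$ with $d\eta=0$.
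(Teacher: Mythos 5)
Your proposal has a genuine gap at its foundation: you invoke a ``germ dictionary'' that carries $\abf^{*}\Hom(\pi_{\abf},G)_{(1)}$ onto $\F^1(A,\g)_{(0)}$ and each $f_{\sharp}^{*}\Hom(\pi_f,G)_{(1)}$ onto $\Phi_f^{*}\F(A_f,\g)_{(0)}$, and then use it to ``transport'' the transversality statement $\{0\}$ from Theorem~\ref{thm:tran}\eqref{fg1} to the conclusion $\{1\}$ of part~\eqref{t1}. But such a component-by-component matching is precisely the \emph{naturality} of the Dimca--Papadima isomorphism with respect to the maps $f$, and the paper explicitly flags (Remark~\ref{rem:nonnat}) that this naturality is known only when $\g$ is abelian; for $G=\SL_2(\C)$ or $\Sol_2(\C)$ the argument from \cite[Thm.~B(2)]{DP-ccm} breaks down. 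Theorem~\ref{thm:b} supplies \emph{some} analytic isomorphism of germ pairs, but one has no a priori control over where it sends the individual subgerms indexed by $E(M)$. This is exactly why the paper's proof of part~\eqref{t1} does not transport anything directly: it first establishes the irreducible decomposition from part~\eqref{t2} on both sides, observes that any analytic isomorphism of germs must match the irreducible components up to a permutation $\tau$ of $E(M)$, and then pins down $\tau(f_0)=f_0$ by contrasting two qualitative features --- on the infinitesimal side, all components other than $\F^1(A,\g)_{(0)}$ intersect pairwise in a single point (Theorem~\ref{thm:nonabtrans}); on the topological side, all components other than $\abf^{*}\Hom(\pi_{\abf},G)_{(1)}$ have positive-dimensional intersection with it (Lemma~\ref{lem:nonred2}). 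Only after $\tau$ is shown to fix $f_0$ does the pairwise $\{0\}\leadsto\{1\}$ transfer become valid. Your proposal skips this entirely.

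A second, smaller issue is your argument that $\Phi_f^{*}\F(A_f,\g)_{(0)}\not\subseteq\F^1(A,\g)_{(0)}$ via ``non-closed logarithmic $1$-forms'' in $A_f$. This fails when $M_f$ is a compact curve, since $A_f$ then has zero differential and \emph{every} degree-one element is closed; the distinction between $\F(A_f,\g)$ and $\F^1(A_f,\g)$ there comes from the Maurer--Cartan equation, not from $d$. The paper's route (Lemma~\ref{lem:nonred2}\eqref{qq1}) is cleaner and covers both cases: if $\Phi_f^{*}\F(A_f,\g)_{(0)}\subseteq\F^1(A,\g)_{(0)}$, injectivity of $\Phi_f$ would force $\F(A_f,\g)_{(0)}\subseteq\F^1(A_f,\g)_{(0)}$, contradicting Lemma~\ref{lem:flatirr}\eqref{gg3}, which is itself a consequence of \cite[Lem.~7.3]{MPPS} for curves of negative Euler characteristic. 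Finally, note that the paper also deduces $\cE(M)=\{f\}$ in part~\eqref{t3} from the rank-one characteristic variety structure (Lemma~\ref{lem:tori}\eqref{tor2}), not from part~\eqref{t1}; running the implications the way you do risks a circularity, since part~\eqref{t1} is the piece that needs the permutation argument described above.
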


Under our assumptions, 
this theorem gives a local, more precise and simple, classification 
for representations of $\pi$ into $G$, when compared to the similar 
global, more sophisticated classification obtained by Corlette--Simpson \cite{CS} 
and Loray--Pereira--Touzet \cite{LPT} in the case when $G=\SL_2(\C)$,   
see Remark \ref{rem:shimura}. Furthermore, as explained in Remark \ref{rk:known}, 
all irreducible components appearing in Theorem \ref{thm:irrtran} are known, 
for an arbitrary quasi-projective manifold with $b_1(M)>0$, and for 
an arbitrary rational representation of $G=\SL_2(\C)$ or $\Sol_2(\C)$, 

\subsection{Applying the decomposition results}
\label{subsec:intro7}

By now, the reader may wonder whether our structural results on 
the irreducible decompositions of germs of embedded jump loci 
apply in any meaningful way.  The next theorem, which will be 
proved in Section \ref{subsec:top}, seeks to dispel such possible 
doubts, by providing a rich supply of quasi-projective manifolds for which 
both \eqref{eq:repincl-intro} and \eqref{eq:vincl-intro} hold as equalities 
near $1$, in the rank two case.

\begin{theorem}
\label{thm:loci}
Suppose $M$ is quasi-projective manifold satisfying one 
of the following hypotheses. 
\begin{enumerate}
\item \label{ai1}
$M$ is projective.
\item \label{ai2}
The Deligne weight filtration has the property that $W_1H^1(M)=0$.
\item \label{ai3}
$M$ is the partial configuration space of a projective curve associated to 
an arbitrary finite simple graph.
\item \label{ai4}
$\RR^1_1(H^{\hdot}(M), d=0)= \{ 0\}$.
\item \label{ai5}
$M=S \setminus \{ 0\}$, where $S$ is a quasi-homogeneous affine surface 
having a normal, isolated singularity at $0$. 
\end{enumerate}
Then, for $G=\SL_2(\C)$ or $\Sol_2(\C)$, the equivalent properties 
from Theorem \ref{thm:main} are satisfied, and thus, the conclusions 
of Theorem \ref{thm:irrtran} hold. 
\end{theorem}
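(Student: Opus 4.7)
The plan is to invoke Theorem~\ref{thm:main} and thereby reduce the entire statement to verifying the single infinitesimal equality \eqref{m3} for each of the five hypotheses. That is, for each listed quasi-projective manifold $M$, it suffices to choose a convenient compactification $(\oM,D)$ and prove that every $\sl_2(\C)$- or $\sol_2(\C)$-valued flat connection on the associated Orlik--Solomon model $A=\OS(\oM,D)$ lies either in $\F^1(A,\g)$ or in the image of $\Phi_f^*$ for some $f\in\cE(M)$. Theorem~\ref{thm:tran}\eqref{fg1} then ensures that the decomposition is actually a decomposition into irreducible components.

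For cases \eqref{ai1} and \eqref{ai2}, I would exploit the Deligne weight filtration. When $M$ is projective the OS model reduces to $(H^{\hdot}(\oM,\C),d=0)$, and when $W_1H^1(M)=0$ Morgan--Dupont theory produces an OS model in which $A^1$ is pure of weight $2$. In both situations the Maurer--Cartan equation becomes a purely quadratic condition on triples in $(A^1)^{\oplus 3}$ governed by cup products, and the equality in \eqref{m3} should follow from an infinitesimal Castelnuovo--de Franchis argument: a genuinely nonabelian $\sl_2$-solution forces the existence of two linearly independent weight-$2$ cocycles with vanishing cup product, and these assemble into an admissible pencil onto a curve of negative Euler characteristic.

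Case \eqref{ai4} is the most direct: the hypothesis $\RR^1_1(H^{\hdot}(M),d=0)=\{0\}$ collapses the first depth-one resonance of the trivial-differential cohomology algebra to the origin, which, transferred to $A$ via the weight filtration, forces every flat connection to be rank-one or to factor through some $\Phi_f$. For case \eqref{ai3}, I would use the explicit Kriz--Totaro style cdga model of the partial configuration space, decompose the flat-connection equation along the edges of the defining graph, and match the resulting pencils with the admissible maps induced by the partial-diagonal projections. For case \eqref{ai5}, the $\C^{\times}$-action on the quasi-homogeneous surface $S$ induces a weighted grading on the OS model of $S\setminus\{0\}$; normality and isolatedness of the singularity then identify the admissible pencils with the projection onto the weighted-projective quotient of the link, and the flat-connection equation can be solved by separating weights.

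The principal obstacle is not the Hodge-theoretic cases \eqref{ai1}, \eqref{ai2}, \eqref{ai4}, which should succumb to the general machinery once the reduction through Theorem~\ref{thm:main} is in place, but rather cases \eqref{ai3} and \eqref{ai5}, where one cannot rely on a soft purity statement and must instead produce each admissible pencil by hand from the combinatorial or analytic data. In particular, for partial configuration spaces one must carefully verify that every nonabelian infinitesimal $\sl_2$-solution descends along a graph-theoretically admissible fibration, which will require detailed bookkeeping with the Arnold--Kriz--Totaro relations; and for the quasi-homogeneous surface case one must control the interaction between the weighted grading and the Morgan--Dupont bigrading on $\OS(\oM,D)$.
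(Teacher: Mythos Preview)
Your overall strategy is exactly the paper's: invoke Theorem~\ref{thm:main} to reduce everything to the single infinitesimal equality \eqref{eq:flatincl-intro} for some convenient compactification, then treat the five cases separately. Where you diverge is in the treatment of the individual cases. The paper does not carry out any of the case-by-case analyses you sketch; instead it simply cites results already established elsewhere: case~\eqref{ai1} is \cite[Cor.~7.2]{MPPS}, case~\eqref{ai2} is \cite[Thm.~4.2]{BMPP}, case~\eqref{ai3} is \cite[Thm.~1.3]{BMPP}, case~\eqref{ai4} is \cite[Cor.~7.7]{MPPS}, and case~\eqref{ai5} is \cite[Thm.~9.6]{PS-15}. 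Your outlines for \eqref{ai1}, \eqref{ai2}, \eqref{ai4} are in fact reasonable summaries of what those cited proofs do (an infinitesimal Castelnuovo--de~Franchis argument is indeed the engine behind \cite{MPPS}), so you are not on a wrong track---you are just proposing to reprove existing theorems rather than invoke them.

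Two minor points. First, your appeal to Theorem~\ref{thm:tran}\eqref{fg1} is superfluous here: once the equivalent properties of Theorem~\ref{thm:main} are in hand, the irreducible-decomposition conclusions follow directly from Theorem~\ref{thm:irrtran}, as the statement of Theorem~\ref{thm:loci} itself asserts; you need not re-argue irreducibility. Second, your worry that cases~\eqref{ai3} and~\eqref{ai5} are the hard ones is well-founded---these are precisely the cases where the cited papers do substantial work with explicit models---but for the purposes of proving Theorem~\ref{thm:loci} as stated, a citation suffices.
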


It would be interesting to decide whether the conclusions 
of Theorem \ref{thm:irrtran} hold for arbitrary quasi-projective 
manifolds, not just the ones from the above list. 

\subsection{Beyond depth $1$ or rank $2$}
\label{subsec:intro8}

As shown in the next result (which will be proved in Theorem \ref{thm:rank3}), 
the higher-rank analogue of the local equality \eqref{eq:repincl-irr} fails, even 
for some very simple, weighted-homogeneous quasi-projective surfaces. 

\begin{theorem}
\label{thm:k3}
Let $M=S \setminus \{ 0\}$, where $S$ is a quasi-homogeneous affine surface 
having a normal, isolated singularity at $0$.  If 
$b_1(M)>0$ and $G=\SL_n(\C)$ with $n\ge 3$, 
then inclusion \eqref{eq:repincl-intro} is strict near $1$. 
\end{theorem}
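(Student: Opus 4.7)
The plan is to pass to the infinitesimal side via the Deligne--Papadima dictionary \cite{DP-ccm} and then exhibit a Heisenberg-type flat connection witnessing strict inclusion. Since $A=\OS(\overline{M},D)$ is a finite model of $M$, the results of \cite{DP-ccm} supply an analytic isomorphism $(\Hom(\pi,G),1)\cong(\F(A,\g),0)$ for $\g=\sl_n(\C)$, identifying $\abf^{*}\Hom(\pi_{\abf},G)_{(1)}$ with $\F^1(A,\g)_{(0)}$ and each $f_{\sharp}^{*}\Hom(\pi_f,G)_{(1)}$ with $\Phi_f^{*}\F(A_f,\g)_{(0)}$. Hence the theorem reduces to constructing, arbitrarily close to $0$, an element $\omega\in\F(A,\sl_n)$ not lying in $\F^1(A,\sl_n)\cup\bigcup_{f\in\cE(M)}\Phi_f^{*}\F(A_f,\sl_n)$.

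The key structural input is the $3$-dimensional Heisenberg subalgebra $\h_3=\spn\{E_{12},E_{23},E_{13}\}\subset\sl_3\hookrightarrow\sl_n$, in which $[E_{12},E_{23}]=E_{13}$ is central; no such subalgebra embeds in $\sl_2$, which is precisely the feature that fails in rank two and matches Theorem~\ref{thm:loci}\eqref{ai5}. Using the $\C^{\times}$-action on $S$ and the Seifert structure of the link of $(S,0)$, I would compute the OS model $A$ via a weighted good resolution, and, leveraging $b_1(M)>0$ together with a Gysin analysis of the Seifert fibration, produce two linearly independent closed $1$-forms $\alpha,\beta\in Z^1(A)$ with $[\alpha\cup\beta]=0$ in $H^2(M)$ but $\alpha\wedge\beta\ne 0$ in $A^2$. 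Choosing $\gamma\in A^1$ with $d\gamma=-\alpha\wedge\beta$, the family
\[
\omega_t=t\bigl(\alpha\otimes E_{12}+\beta\otimes E_{23}+\gamma\otimes E_{13}\bigr),\qquad t\in\C,
\]
lies in $\F(A,\sl_n)$ by a short expansion of $d\omega_t+\tfrac12[\omega_t,\omega_t]=0$ using the Heisenberg bracket relations.

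The two exclusions I would then verify as follows. Since $\{\alpha,\beta\}$ and $\{E_{12},E_{23}\}$ are each linearly independent, the tensor $\omega_t$ has Schmidt rank at least two, so it cannot be a pure tensor $\eta\otimes X\in\F^1(A,\sl_n)$ for $t\ne 0$. For each $f\in\cE(M)$, the curve $M_f$ has a formal OS model $A_f$ with very small degree-two piece (trivial if $M_f$ is open, carrying only the intersection form if $M_f$ is projective), and a hypothetical decomposition $\omega_t=\Phi_f^{*}(\omega')$ would require the Heisenberg relation $d\gamma=-\alpha\wedge\beta$ to descend to $A_f$; the fact that $\alpha\wedge\beta$ is nonzero at the dga level (not merely cohomologically) then conflicts with the formality and narrow cup product of $A_f$, and combining this with the transversality Theorem~\ref{thm:tran}\eqref{fg1} rules out all $f\in\cE(M)$ uniformly.

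The main obstacle I expect is the structural step of producing $\alpha,\beta,\gamma$ with the precise dga-level behaviour $\alpha\wedge\beta\ne 0$ in $A^2$ while $[\alpha\wedge\beta]=0$ in $H^2(M)$, for an \emph{arbitrary} normal isolated quasi-homogeneous surface singularity. The corner case $b_1(M)=1$ is particularly delicate, since $H^1(M)$ alone is too small; here one must exploit the log-pole summand of $A^1$ coming from the exceptional components of the weighted resolution, so the weighted-homogeneous structure guarantees enough such components. Pushing this computation through across all Seifert types (including cusp and Brieskorn-type singularities, and both zero and nonzero Euler number) is the technical heart of the argument.
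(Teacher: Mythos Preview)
Your Heisenberg construction is morally the same as the paper's: the authors also send $a_1\mapsto X_{12}$, $b_1\mapsto X_{23}$ and observe that $[X_{12},X_{23}]=cX_{13}\ne 0$, phrased in the language of the holonomy Lie algebra $\h(A)$ rather than of flat connections. But your reduction to the infinitesimal side has a genuine gap.

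You assert that the isomorphism $\Hom(\pi,G)_{(1)}\cong\F(A,\g)_{(0)}$ from \cite{DP-ccm} identifies $f_{\sharp}^{*}\Hom(\pi_f,G)_{(1)}$ with $\Phi_f^{*}\F(A_f,\g)_{(0)}$. This naturality is exactly what the paper flags as \emph{not available} for non-abelian $G$ (Remark~\ref{rem:nonnat}): the argument in \cite[Thm.~B(2)]{DP-ccm} only works when $\g$ is abelian, and the uniform-formality route of \cite{PS-16} is established for compact K\"{a}hler manifolds and arrangement complements, not for singularity links. Without this identification, strictness of the infinitesimal inclusion does not directly yield strictness of \eqref{eq:repincl-intro}. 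The paper circumvents this by arguing by contradiction: assuming equality in \eqref{eq:repincl-intro}, one gets a chain of \emph{abstract} germ isomorphisms $\F(A,\g)_{(0)}\cong\Hom(\pi,G)_{(1)}\cong\Hom(\pi_f,G)_{(1)}\cong\F(A_f,\g)_{(0)}\cong\varphi^{*}\F(A_f,\g)_{(0)}$, and then invokes the Hopfian property of Noetherian rings (Lemma~\ref{lem:p3}) plus positive weights to upgrade this to a genuine global equality $\F(A,\g)=\varphi^{*}\F(A_f,\g)$, which is then contradicted via the Heisenberg map.

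Two further remarks. First, the ``technical heart'' you anticipate is unnecessary: the paper uses the explicit finite model $A=H^{\hdot}(\Sigma_g)\otimes\bigwedge(t)$ with $dt=\omega$ the orientation class (from \cite{PS-15}), in which your $\alpha,\beta$ are simply a symplectic pair in $H^1(\Sigma_g)$ and $\gamma$ is a multiple of $t$; no resolution or Gysin analysis is needed. Second, your concern about $b_1(M)=1$ is moot: since $M/\C^{\times}=\Sigma_g$ with $g=b_1(M)/2$, the first Betti number is always even, and moreover $\cE(M)$ is either empty ($g=1$) or the singleton $\{f\}$ ($g>1$), which also renders your appeal to transversality (Theorem~\ref{thm:tran}) superfluous.
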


Finally, as shown in the next result (which will be proved in Theorem \ref{thm:depth2}), 
the higher-depth analogue of Theorem \ref{thm:irrtran} also may fail, even in rank $2$.

\begin{theorem}
\label{thm:dep2}
Let $M$ be a connected, compact K\"{a}hler manifold, or the complement 
of a central complex hyperplane arrangement, and suppose  
there exists an admissible map $f\colon M\to M_f$ such that $b_1(M_f)<b_1(M)$.  
If  $\iota\colon G\to \GL(V)$ is  
a rational representation of $G=\SL_2(\C)$ or $\Sol_2(\C)$, having a non-zero 
fixed vector $v\in V^G$, there is then an integer $r>1$ such that inclusion 
\eqref{eq:vincldeg1-intro} is strict near $1$.
\end{theorem}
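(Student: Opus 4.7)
The plan is to exhibit, arbitrarily close to $1$, a representation $\rho = f_\sharp^* \rho' \in \Hom(\pi, G)$ for which $\dim H^1(\pi, V_{\iota \circ \rho})$ strictly exceeds $\dim H^1(\pi_f, V_{\iota \circ \rho'})$, and then take $r$ to be an integer separating these two values. Here $f\colon M\to M_f$ is the admissible map provided by the hypothesis, while $\rho'\colon \pi_f\to G$ is chosen generic in a small analytic neighborhood of $1\in\Hom(\pi_f,G)$. Genericity will demand that $\rho'(\pi_f)$ be non-abelian, that $V^{\rho'(\pi_f)} = V^G = \C v$, and that $(V/\C v)^{\rho'(\pi_f)} = 0$; these are Zariski-open conditions on $\Hom(\pi_f,G)$, and they are realized arbitrarily close to $1$ because $\pi_f$ is non-abelian and carries non-trivial first-order deformations of the trivial representation for either $G=\SL_2(\C)$ or $\Sol_2(\C)$.

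The core computation runs the Hochschild--Serre spectral sequence for the extension $1 \to N \to \pi \to \pi_f \to 1$, with $N = \ker f_\sharp$. Since $\rho|_N = 1$, the $E_2$-page reads $E_2^{p,q} = H^p(\pi_f, H^q(N,\C) \otimes V_{\iota \circ \rho'})$, and the five-term sequence yields
\[
\dim H^1(\pi, V_{\iota \circ \rho}) = \dim H^1(\pi_f, V_{\iota \circ \rho'}) + \dim \ker\bigl(d_2\colon (H^1(N)\otimes V)^{\pi_f} \to H^2(\pi_f, V_{\iota \circ \rho'})\bigr).
\]
The $\pi_f$-equivariant inclusion $\C v \hookrightarrow V$ embeds $H^1(N)^{\pi_f}\otimes \C v$ into $(H^1(N)\otimes V)^{\pi_f}$, and by naturality of the spectral sequence the restriction of $d_2$ to this subspace agrees with the transgression of Hochschild--Serre applied to trivial $\C$-coefficients. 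The five-term sequence for $\C$-coefficients identifies that kernel with dimension exactly $b_1(M) - b_1(M_f) > 0$, so
\[
\dim H^1(\pi, V_{\iota \circ \rho}) \;\geq\; \dim H^1(\pi_f, V_{\iota \circ \rho'}) + b_1(M) - b_1(M_f).
\]
Meanwhile, the long exact sequence for $0 \to \C v \to V \to V/\C v \to 0$, combined with $(V/\C v)^{\rho'(\pi_f)} = 0$, forces $H^1(\pi_f,\C v) \hookrightarrow H^1(\pi_f, V_{\iota\circ\rho'})$, whence $\dim H^1(\pi_f, V_{\iota\circ\rho'}) \geq b_1(M_f)\geq 2$ (in the compact K\"ahler case $M_f$ is a curve of genus $\geq 2$; in the arrangement case $M_f$ is $\CP^1$ minus at least three points).

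Set $r := \dim H^1(\pi_f, V_{\iota\circ\rho'}) + 1$, so $r \geq 3 > 1$ and $\rho \in \VV^1_r(\pi,\iota)$ by the displayed bound. I then verify that $\rho$ lies in none of the terms of the right-hand side of \eqref{eq:vincldeg1-intro}: first, $\rho \notin \abf^* \Hom(\pi_\abf,G)$ because $\rho(\pi)=\rho'(\pi_f)$ is non-abelian; second, for any $g\in\cE(M)$ with $g\neq f$, Theorem~\ref{thm:tran}\eqref{fg2} gives $f_\sharp^* \Hom(\pi_f,G)_{(1)} \cap g_\sharp^* \Hom(\pi_g,G)_{(1)} = \{1\}$, ruling out $\rho\neq 1$; third, surjectivity of $f_\sharp$ makes $f_\sharp^*$ injective on representations, so $\rho \in f_\sharp^*\VV^1_r(\pi_f,\iota)$ would force $\rho' \in \VV^1_r(\pi_f,\iota)$, contradicting $\dim H^1(\pi_f, V_{\iota\circ\rho'}) = r-1$. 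Since $\rho$ can be made arbitrarily close to $1$ by shrinking the neighborhood of $\rho'$, inclusion \eqref{eq:vincldeg1-intro} is strict near $1$ at this value of $r$.

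The main obstacle is the naturality step identifying $d_2\vert_{H^1(N)^{\pi_f}\otimes \C v}$ with the transgression for trivial coefficients --- this is precisely what converts the fixed vector $v\in V^G$ into the strict excess $b_1(M) - b_1(M_f)$ between the two $H^1$ dimensions. A secondary concern is arranging the three genericity conditions on $\rho'$ simultaneously close to $1$ in the Borel case $G = \Sol_2(\C)$, where the deformation theory of the trivial representation is tighter than for $\SL_2(\C)$ but still admits non-abelian local deformations whenever $\pi_f$ itself is non-abelian.
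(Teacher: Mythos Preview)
Your overall strategy is sound and genuinely different from the paper's. The paper proves the result entirely on the infinitesimal side (Lemma~\ref{lem:rk2strict}): it picks a flat connection $\omega\in\F(H^{\hdot}(M_f),\g)\setminus\F^1(H^{\hdot}(M_f),\g)$, sets $\Omega=\Phi_f^{*}\omega$, and exhibits the extra Aomoto cocycle $\eta\otimes v$ (with $\eta\in H^1(M)\setminus\im H^1(f)$) to force $b_1(\Omega,\theta)>b_1(\omega,\theta)$; it then invokes uniform formality (Lemma~\ref{lem:karr}) to transport the strict inclusion from resonance to characteristic varieties. Your Hochschild--Serre computation is the group-cohomology avatar of exactly the same phenomenon: the $\pi_f$-equivariant inclusion $\C v\hookrightarrow V$ produces, via naturality of the five-term sequence, an extra piece $\ker d_2^{\C}$ of dimension $b_1(M)-b_1(M_f)$. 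Note that you still use uniform formality, just packaged as Theorem~\ref{thm:tran}\eqref{fg2} rather than as a global identification of jump loci.

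There is, however, a genuine gap in your genericity setup. The hypothesis only supplies \emph{some} non-zero $v\in V^G$; nothing forces $V^G=\C v$. If $\dim V^G>1$ then $(V/\C v)^G\supseteq V^G/\C v\ne 0$, so $(V/\C v)^{\rho'(\pi_f)}\supseteq (V/\C v)^G\ne 0$ for \emph{every} $\rho'$, and your condition $(V/\C v)^{\rho'(\pi_f)}=0$ is vacuous --- it is not a Zariski-open condition that is ``realized arbitrarily close to $1$,'' it is never realized at all. Fortunately you only use this (and the companion condition $V^{\rho'(\pi_f)}=\C v$) to deduce $\dim H^1(\pi_f,V_{\iota\circ\rho'})\ge b_1(M_f)$, hence $r\ge 3$. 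But the theorem only asks for $r>1$, and $\dim H^1(\pi_f,V_{\iota\circ\rho'})\ge 1$ holds for \emph{all} $\rho'$ simply because $\chi(M_f)<0$ (Example~\ref{ex:2cw}). So drop those two conditions entirely and keep only ``$\rho'(\pi_f)$ non-abelian,'' which is indeed Zariski-open and non-empty near $1$ by Lemma~\ref{lem:flatirr}\eqref{gg3}.

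One further point deserves a sentence of justification: you need the \emph{same} $r$ to work for $\rho'$ arbitrarily close to $1$. Since the germ $\Hom(\pi_f,G)_{(1)}$ is irreducible (Lemma~\ref{lem:flatirr}\eqref{gg2}), the upper-semicontinuous function $\rho'\mapsto\dim H^1(\pi_f,V_{\iota\circ\rho'})$ takes a constant minimum value $s_0$ on a Zariski-open (hence analytically dense) subset near $1$; set $r=s_0+1$ and restrict $\rho'$ to the intersection of this locus with the non-abelian locus.
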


Concrete instances where this theorem applies are given in 
Examples \ref{ex:arrk} and \ref{ex:cpt}. On the other hand, when 
$M$ is a connected, compact K\"{a}hler manifold or the complement 
of a central complex hyperplane arrangement and $\iota\colon G\to \GL(V)$ is  
an arbitrary rational representation of $\SL_2(\C)$ or $\Sol_2(\C)$, then, 
as shown in \cite{PS-16}, the local equalities \eqref{eq:repincl-irr} 
and \eqref{eq:vincl-irr} hold.  

\section{Local analytic germs}
\label{sect:local}

\subsection{Irreducible decompositions}
\label{subsec:irred decomp}

This section contains the necessary preparatory material pertaining to 
irreducible decompositions of complex affine varieties and analytic germs.    
We start with a lemma which will be used repeatedly in the sequel. 

\begin{lemma}
\label{lem:irrtrick}
Let $X$ be an analytic germ, and assume that 
\[
\bigcup_{i\in I} Y_i \subseteq X = \bigcup_{i\in I} X_i,
\] 
where the indexing set $I$ is finite, all germs $X_i, Y_i$ are irreducible, 
$\dim X_i=\dim Y_i$ for all $i$,
and $Y_i \not\subseteq Y_j$ for $i\ne j$. Then:
\begin{enumerate}
\item \label{xx1}  There is a bijection $b\colon I\to I$
such that $Y_i=X_{b(i)}$, for all $i$;
\item \label{xx2} $X = \bigcup_{i\in I} Y_i$.
\end{enumerate}
\end{lemma}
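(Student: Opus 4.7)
My plan is to build the bijection $b\colon I\to I$ by downward induction on dimension. The starting observation is that each $Y_i$, being irreducible and contained in the finite union $X=\bigcup_j X_j$ of closed subgerms, must lie in some $X_{b(i)}$; choosing one such index yields a function $b\colon I\to I$ satisfying $\dim X_i=\dim Y_i\le\dim X_{b(i)}$. The goal is then to show $b$ can be arranged to be a bijection with $Y_i=X_{b(i)}$ for every $i$.

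Write $I_d=\{i\in I:\dim Y_i=d\}$ and $I'_d=\{j\in I:\dim X_j=d\}$; the hypothesis $\dim X_i=\dim Y_i$ index-by-index gives $|I_d|=|I'_d|$ for every $d$. For the top dimension $d_{\max}$, any $i\in I_{d_{\max}}$ forces $b(i)\in I'_{d_{\max}}$ (otherwise $\dim X_{b(i)}>d_{\max}$, absurd), and then $Y_i\subseteq X_{b(i)}$ with both irreducible of equal dimension gives $Y_i=X_{b(i)}$. The non-containment hypothesis on the $Y_i$'s immediately makes $b|_{I_{d_{\max}}}$ injective, hence bijective onto $I'_{d_{\max}}$ by cardinality. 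I would then iterate downward: at dimension $d$, assuming bijections $I_{d'}\to I'_{d'}$ with $Y_{i'}=X_{b(i')}$ already established for every $d'>d$, a point $i\in I_d$ with $\dim X_{b(i)}=d$ is handled exactly as in the base case. The remaining possibility $\dim X_{b(i)}=d'>d$ is excluded because the inductive identification gives $X_{b(i)}=Y_{i'}$ for some $i'\in I_{d'}$, and then $Y_i\subseteq Y_{i'}$ with $i\ne i'$ (their dimensions differ), contradicting the hypothesis. Injectivity of $b|_{I_d}$ follows from $Y_{i_1}=X_{b(i_1)}=X_{b(i_2)}=Y_{i_2}$ (forcing $i_1=i_2$ via non-containment), and cardinality upgrades this to a bijection onto $I'_d$. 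Part (ii) is then immediate: $X=\bigcup_i X_i=\bigcup_i X_{b(i)}=\bigcup_i Y_i$.

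The main technical point, and the only place the non-containment hypothesis really enters, is the exclusion of the case $\dim X_{b(i)}>\dim Y_i$ in the inductive step; without it a lower-dimensional $Y_i$ could be absorbed into a higher-dimensional $X_j$ that is already the image of some other $Y_{i'}$, destroying both injectivity and the dimension accounting. Everything else in the argument is routine bookkeeping with dimensions of irreducible analytic germs.
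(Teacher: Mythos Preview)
Your proof is correct and follows essentially the same approach as the paper: both arguments pick, for each $Y_i$, some $X_{b(i)}\supseteq Y_i$, then use the dimension hypothesis together with the non-containment condition $Y_i\not\subseteq Y_j$ to rule out the possibility $\dim X_{b(i)}>\dim Y_i$ and force equality $Y_i=X_{b(i)}$. The only difference is organizational: the paper builds $b$ one index at a time (always choosing an index of maximal remaining dimension), whereas you process an entire dimension level $I_d$ at once via downward induction on $d$; the two inductions unwind to the same computation.
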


\begin{proof}
Let $J\subseteq I$ be a subset. To prove part \eqref{xx1}, 
we will construct by induction on the cardinality of $J$ an injection 
$b\colon J\inj I$ with the property that $Y_i=X_{b(i)}$, for all $i\in J$, starting 
with $J= \O$. It will be useful to consider the dimension partition, 
$I=\coprod I_d$, where 
\begin{equation}
\label{eq:idxy}
I_d= \{ i\in I \mid \dim X_i=\dim Y_i=d\}.
\end{equation}
Clearly, the injection $b$ must respect the partition blocks.

For the induction step, assume $J\neq I$ and pick $i_0\in I\setminus J$ 
such that $d_0=\dim Y_{i_0}$ maximizes $\dim Y_i$ for $i\in I\setminus J$. 
Plainly, $I_d\subseteq J$ and $b(I_d)=I_d$, for $d>d_0$.

Since $Y_{i_0}$ is irreducible, $Y_{i_0}\subseteq X_i$, for some $i\in I$. 
Set $d=\dim X_i\ge d_0$. If $d>d_0$, then $i=b(j)$ for some 
$j\in I_d\subseteq J$, by the previous remark. This implies that $X_i=Y_j$,
by the induction assumption. We infer that $Y_{i_0}\subseteq Y_j$, 
a contradiction. Hence, $d=d_0$, and
therefore $Y_{i_0}= X_i$, by irreducibility.

Set $J'=J\coprod \{ i_0\}$ and extend $b$ to $J'$ by defining $b(i_0)=i$. 
Then clearly $Y_j=X_{b(j)}$, for all $j\in J'$.  To finish the induction, we 
have to check that $i$ cannot be of the form $b(j)$ with $j\in J$. Otherwise, 
$X_i=Y_j$, by the induction hypothesis. This implies that $Y_{i_0} =Y_j$, 
a contradiction. 

Finally, the equality from part \eqref{xx2} is a direct consequence 
of part \eqref{xx1}. 
\end{proof}

\subsection{From inclusions to equalities}
\label{subsec:incl}
Next, we delineate conditions under which 
inclusions of affine varieties or analytic germs become equalities. 

\begin{lemma}
\label{lem:p1}
Let $\X$ be an affine variety with the property that all irreducible 
components pass through $x\in \X$.  If $\X'$ is another affine variety 
such that the inclusion $\X\subseteq \X'$ holds near $x$, then the 
inclusion holds globally.
\end{lemma}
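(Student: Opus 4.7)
The plan is to reduce to the irreducible components of $\X$ and then use the standard comparison between Zariski and Euclidean topologies: a proper Zariski-closed subvariety of an irreducible complex affine variety has empty Euclidean interior.

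Concretely, I would decompose $\X=\bigcup_{i\in I}\X_i$ into irreducible components and, since each $\X_i$ contains $x$, observe that it suffices to prove $\X_i\subseteq \X'$ for every $i$. Fix such an $i$ and set $Y_i := \X_i\cap \X'$, which is Zariski-closed in $\X_i$. The assumption that $\X\subseteq \X'$ near $x$ gives $Y_i=\X_i$ on some Euclidean neighborhood of $x$ in $\X_i$; in particular, $Y_i$ has nonempty Euclidean interior in $\X_i$. Since $Y_i$ has only finitely many irreducible components, at least one of them, say $Z$, already contains a Euclidean neighborhood of $x$ in $\X_i$, and hence satisfies $\dim_x Z=\dim_x \X_i=\dim \X_i$, using that $\X_i$ is irreducible and passes through $x$. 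As $Z$ is a Zariski-closed irreducible subset of the irreducible variety $\X_i$ of the same dimension, $Z=\X_i$, so $\X_i=Z\subseteq Y_i\subseteq \X'$.

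Taking the union over $i$ then yields the global inclusion $\X=\bigcup_{i\in I}\X_i\subseteq \X'$. The only non-formal input is the Zariski-versus-Euclidean comparison invoked in the previous paragraph, and I do not foresee any serious obstacle beyond this well-known fact; the remainder of the argument is purely set-theoretic bookkeeping, and the hypothesis that every irreducible component of $\X$ passes through $x$ is used precisely to make the local-to-global passage work component by component.
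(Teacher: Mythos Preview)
Your argument is correct in outline and supplies what the paper leaves to a citation (the paper's own proof is simply a pointer to \cite[\S 9.23]{DP-ccm}). The reduction to a single irreducible component $\X_i$ through $x$, followed by the Zariski-versus-Euclidean comparison, is exactly the right mechanism.

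One minor wrinkle: the assertion that some irreducible component $Z$ of $Y_i=\X_i\cap\X'$ ``already contains a Euclidean neighborhood of $x$ in $\X_i$'' does not follow from finiteness of the component set alone---a priori the neighborhood could be distributed among several components, none of which individually contains a full neighborhood of $x$. You can bypass this step entirely. Since $Y_i$ is Zariski-closed in the irreducible variety $\X_i$ and contains a nonempty Euclidean-open subset, its Zariski-open complement $\X_i\setminus Y_i$ fails to be Euclidean-dense in $\X_i$; but a nonempty Zariski-open subset of an irreducible complex variety is always Euclidean-dense, so $\X_i\setminus Y_i=\varnothing$ and $\X_i=Y_i\subseteq\X'$ directly. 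With this simplification the decomposition of $Y_i$ into irreducible pieces and the local dimension comparison become unnecessary.
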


\begin{proof}
The argument from the first paragraph of \cite[\S 9.23]{DP-ccm} 
establishes the claim.
\end{proof}

\begin{lemma}
\label{lem:p3}
If $X$ and $Y$ are isomorphic germs, and $X\subseteq Y$, then $X=Y$.
\end{lemma}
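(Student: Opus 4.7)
The plan is to pass from the geometric statement to a statement about the analytic local rings $\mathcal{O}_X$ and $\mathcal{O}_Y$, and then invoke the standard fact that a surjective endomorphism of a Noetherian ring is an isomorphism. Since $X$ and $Y$ are reduced analytic germs at a common point (say $0$ in some ambient $\C^N$), the inclusion $X\subseteq Y$ corresponds to a surjection of analytic local $\C$-algebras $p\colon \mathcal{O}_Y \twoheadrightarrow \mathcal{O}_X$, whose kernel is the defining ideal of $X$ inside $Y$. An isomorphism of germs $\phi\colon Y \isom X$ sends basepoint to basepoint and induces an isomorphism of local rings $\phi^{*}\colon \mathcal{O}_X \isom \mathcal{O}_Y$.

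The key step is to form the composition
\[
\Phi \,=\, \phi^{*} \circ p \,\colon\, \mathcal{O}_Y \,\twoheadrightarrow\, \mathcal{O}_X \,\xrightarrow{\,\isom\,}\, \mathcal{O}_Y,
\]
which is a surjective endomorphism of $\mathcal{O}_Y$. Since $\mathcal{O}_Y$ is a quotient of the convergent power series ring $\C\{z_1,\dots,z_N\}$, it is Noetherian. Applying the standard fact that a surjective endomorphism of a Noetherian ring is necessarily injective (the ascending chain $\ker\Phi \subseteq \ker\Phi^2 \subseteq \cdots$ must stabilize), we conclude that $\Phi$ is an isomorphism. In particular, $p$ must be injective, so its kernel is zero, and hence $X=Y$.

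The only substantive point is the Noetherianity of the analytic local ring, which is classical; given that, the argument is essentially a one-line application of the surjective-endomorphism principle, so I do not anticipate a real obstacle. An alternative route would be to compare irreducible decompositions and dimensions of the germs $X$ and $Y$ using Lemma \ref{lem:irrtrick}, together with the fact that irreducible components of $X$ sit inside irreducible components of $Y$ of the same dimension, but this is more cumbersome and loses the transparency of the local-ring argument.
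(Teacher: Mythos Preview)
Your argument is correct and is essentially the same as the paper's: the paper simply says that the inclusion induces an epimorphism on coordinate rings which must be an isomorphism by the Hopfian property of Noetherian rings (citing \cite{T}), and your composition $\Phi=\phi^{*}\circ p$ is exactly what makes that invocation precise. You have only unpacked the one-line reference in the paper; there is no substantive difference.
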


\begin{proof}
The inclusion $X\subseteq Y$ induces an epimorphism on coordinate rings, 
which must be an isomorphism, by the Hopfian property of Noetherian rings, 
see e.g. \cite[p.~65]{T}.
\end{proof}

\subsection{Local versus global irreducibility}
\label{subsec:loc global}
Finally, we describe a setting in which global and local irreducibility are equivalent.  
We say that an affine subvariety $\X\subseteq \C^n$ has {\em positive weights}\/ 
if $\X$ is invariant with respect to a $\C^{\times}$-action on $\C^n$ 
with positive weights. 

\begin{lemma}
\label{lem:p2}
If $\X$ has positive weights, then all its irreducible components 
pass through $0$. Moreover, the global irreducibility 
of $\X$ is equivalent to the local irreducibility of the germ $\X_{(0)}$.
\end{lemma}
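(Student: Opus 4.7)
The plan is to leverage the positive-weight $\C^{\times}$-action to contract $\X$ to the origin and to equip $R:=\C[\X]$ with a positive grading $R=\bigoplus_{d\ge 0}R_d$ with $R_0=\C$, so that the maximal ideal at $0$ coincides with the irrelevant ideal $\m=\bigoplus_{d>0}R_d$. For the first claim, note that the $\C^{\times}$-action permutes the finite set of irreducible components of $\X$; connectedness of $\C^{\times}$ makes this permutation trivial, so each irreducible component $Y$ is $\C^{\times}$-stable. For any $y\in Y$, positivity of the weights gives $\lim_{t\to 0} t\cdot y=0$, and Zariski-closedness of $Y$ forces $0\in Y$.

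For the non-trivial direction of the equivalence, the plan is to pass to completions. Since the weights are positive integers, a straightforward cofinality estimate shows that the $\m$-adic filtration and the grading filtration on $R$ define the same topology, yielding $\widehat{R}_{\m}\cong \prod_{d\ge 0} R_d$. Assuming $R$ is a domain, suppose $f,g\in \widehat{R}_{\m}$ satisfy $fg=0$ with both nonzero; picking their lowest nonzero homogeneous parts $f_{d_0}$ and $g_{e_0}$, the product $f_{d_0}g_{e_0}$ is the degree-$(d_0+e_0)$ component of $fg$ and lies in $R$, so it must vanish, contradicting that $R$ is a domain. Hence $\widehat{R}_{\m}$ is a domain. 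Combined with the classical identification $\widehat{\mathcal{O}_{\X,0}^{\mathrm{an}}}\cong \widehat{R}_{\m}$ and Krull's intersection theorem, which embeds the Noetherian local ring $\mathcal{O}_{\X,0}^{\mathrm{an}}$ into its completion, the analytic local ring is a domain, i.e., $\X_{(0)}$ is irreducible.

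The converse is direct: if $\X=\X_1\cup\cdots\cup \X_k$ is a decomposition into distinct global irreducible components with $k\ge 2$, the first claim places $0$ in every $\X_i$, and any germ inclusion $(\X_i)_{(0)}\subseteq (\X_j)_{(0)}$ would provide a nonempty Zariski-open piece of the irreducible set $\X_i$ inside $\X_j$, forcing $\X_i\subseteq \X_j$ globally and contradicting maximality. So the $k$ germs $(\X_i)_{(0)}$ are pairwise incomparable and $\X_{(0)}$ is reducible. The main technical obstacle will be verifying carefully the cofinality of the $\m$-adic and grading filtrations (elementary once the weights are taken to be positive integers) together with the compatibility of the algebraic and analytic completions at the origin; the rest of the argument is essentially formal once the grading on $R$ is in play.
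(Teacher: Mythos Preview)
Your argument is correct and, for the substantive direction (global $\Rightarrow$ local irreducibility), follows exactly the paper's strategy: both you and the authors run the weighted-initial-term trick in the formal completion $\widehat{R}_{\m}\cong T/K$ and then embed the analytic local ring into that completion (you via Krull, the paper via Tougeron). The paper packages everything through the chain $R\to S\to T$ of polynomial, convergent, and formal series rings and proves the equivalence $R/I$ domain $\Leftrightarrow$ $T/K$ domain $\Leftrightarrow$ $S/J$ domain in one stroke; you phrase the same computation intrinsically on the graded ring $\C[\X]$.

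Where you diverge is the converse: the paper gets it for free from the embedding $R/I\hookrightarrow T/K$, whereas you argue geometrically that distinct global components give incomparable germs at $0$. This is fine but needs two small repairs. First, a germ inclusion $(\X_i)_{(0)}\subseteq(\X_j)_{(0)}$ only furnishes an \emph{analytic} (not Zariski) open neighborhood of $0$ in $\X_i$ lying in $\X_j$; the global inclusion then follows because any nonempty classical-open subset of the irreducible variety $\X_i$ is Zariski-dense. Second, pairwise incomparability of the germs is not by itself enough to conclude that $\X_{(0)}$ is reducible when $k\ge 3$: you need that no $(\X_i)_{(0)}$ lies in the union of the others, which follows by the same density argument together with global irreducibility of $\X_i$. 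With these tweaks your converse is complete and is arguably more transparent than the paper's purely algebraic route.
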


\begin{proof}
Since the algebraic group $\C^{\times}$ is connected, the action by $\C^{\times}$ 
on $\C^n$ leaves the irreducible components of $\X$ invariant.  Fix such a 
component $\YY$, and let $x\in \YY$.  Then $tx \in \YY$ for all $t \in \C^{\times}$, 
and thus $0=\lim_{t\to 0} tx$ must also belong $\YY$, since the action has 
positive weights. 

To prove the second claim, let us consider the canonical algebra morphisms,
$R\to S\to T$, relating polynomials, convergent series and formal series in 
$n$ variables.  Given an ideal $I \subset R$, denote by $J$ and $K$ the 
ideals generated by $I$ in $S$ and $T$, respectively. When $I$ is the 
defining ideal of an affine subvariety $\X \subseteq \C^n$ having positive 
weights, we know that $I$ is generated by finitely many polynomials 
which are homogeneous 
with respect to the positive weights of the variables. We infer that
an element $f\in R$ (respectively $f\in T$) belongs to $I$ (respectively $K$) 
if and only if all its weighted-homogeneous components $f_i$ are in $I$. 
Hence, $R/I$ canonically embeds into $T/K$.  Consequently, if $T/K$ is 
a reduced ring (or a domain), the ring $R/I$ has the same property. We 
claim that both implications above are actually equivalences. 

Granting this claim, we may finish our proof, as follows. Given an arbitrary 
ideal $J\subset S$, it is well-known that the canonical algebra morphism, 
$S/J \to T/K$ (where $K$ is as above), is injective, cf.~\cite[p.~36]{T}.
It follows from \cite[Cor.~ II.4.2 and Thm.~II.4.5]{T} that $T/K$ is a reduced 
ring (a domain) if and only if the ring $S/J$ has the same property.  Together 
with the above claim, this shows that $R/I$ is a reduced ring (a domain) if 
and only if the ring $S/J$ has the same property. Since $R/I$ and $S/J$ 
are the coordinate rings of $\X$ and $\X_{(0)}$, respectively, we infer that 
$\X$ is irreducible if and only if  $\X_{(0)}$ is irreducible, as asserted.

Going back to the above claim, let us show that if $R/I$ a domain then $T/K$ 
is a domain as well.  (The reduced property can be verified by a similar argument.) 
Otherwise, we may find two formal series with the property that $f\not\equiv 0$ 
(modulo $K$) and $g\not\equiv 0$ (modulo $K$), for which $fg\equiv 0$ (modulo $K$). 
Plainly, we may assume that their weighted initial terms, $f_p \in R$ (respectively, 
$g_q \in R$), do not belong to $I$. But then the initial term of the product, $f_p g_q$, 
must belong to $I$. This contradiction completes our proof.
\end{proof}

\section{Embedded jump loci}
\label{sect:jumps}

\subsection{Representation varieties and characteristic varieties}
\label{subsec:repcv} 

Let $\pi$ be a discrete group, and let $G$ be a $\C$-linear algebraic group. 
The set $\Hom(\pi, G)$ of group homomorphisms from $\pi$ to $G$, called the 
{\em $G$-representation variety}\/ of $\pi$, depends 
bi-functorially on $\pi$ and $G$.  Furthermore, this set comes equipped with 
a natural base point, namely, the trivial representation, $1$. 

Assuming now that $\pi$ is a finitely generated group, 
the set $\Hom(\pi, G)$ acquires a natural structure of affine variety.
Furthermore, every homomorphism $\varphi\colon \pi \to \pi'$ 
induces an algebraic morphism between the corresponding 
representation varieties, $\varphi^{*}\colon \Hom (\pi', G) \to\Hom (\pi, G)$.  

Now let $(X,x_0)$ be a pointed, path-connected space, and let 
$\pi=\pi_1(X,x_0)$ be its fundamental group. Then the representation 
variety $\Hom(\pi, G)$ is the parameter space for finite-dimensional 
local systems on $X$ of type $G$, see e.g.~\cite[Ch.~VI]{Wh}.   
Given a representation $\tau\colon \pi\to \GL(V)$, we let 
$V_\tau$ denote the local system on $X$ associated to $\tau$, 
that is, the left $\pi$-module $V$ defined by $g\cdot v=\tau(g) v$. 
Furthermore, we let $H^{\hdot}(X,V_\tau)$ be the twisted cohomology 
of $X$ with coefficients in this local system, as in \cite[Ch.~VI]{Wh}. 
(If $X$ is semilocally $1$-connected, a classical result of S.~Eilenberg 
identifies twisted homology on $X$ with equivariant homology on the
universal cover of $X$.)

\begin{definition}
\label{def:cv}
The {\em characteristic varieties}\/ of the space $X$ in degree $i\ge 0$ 
and depth $r\ge 0$ with respect to a  representation 
$\iota\colon G \to \GL(V)$ are the sets 
\[
\VV^i_r(X,\iota) =\{ \rho \in \Hom(\pi,G) \mid 
\dim H^i(X, V_{\iota \circ \rho} )\ge r\}.
\]
\end{definition}

For instance, $\VV^0_1(X,\iota)$ consists of those representations 
$\rho\colon \pi\to G$ for which there exists a non-zero vector $v\in V$ 
such that $\iota(\rho(g))(v)=v$, for all $g\in \pi$.  In the rank $1$ case, 
i.e., when $\iota$ is the canonical identification $\C^{\times}\to \GL_1(\C)$, 
we will drop the map $\iota$ from the notation, and simply write $\VV^i_r(X)$.  
For each $i\ge 0$, the sequence $\{\VV^i_r(X,\iota)\}_{r\ge 0}$ is 
a descending filtration of $\Hom(\pi,G) = \VV^i_0(X,\iota)$. 
We will refer to the pairs 
\begin{equation}
\label{eq:homcv}
\left( \Hom(\pi,G) , \VV^i_r(X,\iota) \right)
\end{equation}
as the (global) {\em embedded jump loci}\/ of $X$ with respect to $\iota$. 
Clearly, such pairs depend only on the homotopy type of $X$ and on the 
representation $\iota$.  

Assume now that the space $X$ has the homotopy type of a finite, 
connected CW-complex (in particular, $X$ is path-connected and 
locally simply-connected), and that the map $\iota\colon G \to \GL(V)$ 
is a rational representation.  Then the sets $\VV^i_r(X,\iota)$ are closed 
subvarieties of the representation variety $\Hom(\pi,G)$. The following 
simple example will be useful later on.

\begin{example}
\label{ex:2cw}
Let $X$ be a connected, $2$-dimensional CW-complex, 
and assume $\chi(X)<0$. Then $\VV^1_1(X,\iota)=\Hom(\pi_1(X),G)$, 
for any rational representation $\iota\colon G\to \GL(V)$.  Indeed, 
let $\rho\colon \pi_1(X)\to G$ be a homomorphism. Writing 
$b_i(X,\rho)=\dim H^i(X, V_{\iota \circ \rho} )$, we have that 
\[
b_0(X,\rho) - b_1(X,\rho) + b_2(X,\rho) = \chi(X)\dim(V) <0. 
\]
This forces $b_1(X,\rho)>0$, thereby showing that $\rho\in \VV^1_1(X,\iota)$.
\end{example}

The embedded jump loci enjoy a useful naturality property, which we 
record in the next lemma (see \cite[Cor.~5.8]{PS-16} for a proof, in a 
more general setting).

\begin{lemma}
\label{lem:cvnat}
Let $f\colon X\to X'$ be a pointed map between two 
spaces as above.  Assume that the induced homomorphism 
on fundamental groups, $f_{\sharp}\colon \pi_1(X)\to \pi_1(X')$,  
is surjective. Then the morphism induced by $f_{\sharp}$ 
on representation varieties,
\begin{equation}
\label{eq:ztop}
\xymatrixcolsep{18pt}
\xymatrix{
f_{\sharp}^{*}\colon \Hom (\pi_1(X'), G) \ar[r]& \Hom (\pi_1(X), G)},
\end{equation}
is a closed embedding, which restricts to embeddings 
$\VV^i_r(X',\iota) \to \VV^i_r(X,\iota)$ for all $i\le 1$ 
and $r\ge 0$, and induces isomorphisms between
$\VV^0_r(X',\iota)$ and $\VV^0_r(X,\iota) \cap 
f_{\sharp}^{*}\Hom (\pi_1(X'), G)$, for all $r\ge 0$.
\end{lemma}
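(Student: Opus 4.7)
The plan is to prove the three successive claims of the lemma in order: (a) that $f_{\sharp}^{*}$ is a closed embedding of representation varieties; (b) that for $i\le 1$ it restricts to a closed embedding of $\VV^i_r(X',\iota)$ into $\VV^i_r(X,\iota)$; and (c) that in degree zero this restriction is in fact an isomorphism onto $\VV^0_r(X,\iota)\cap f_{\sharp}^{*}\Hom(\pi_1(X'),G)$.

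For (a), I would fix a finite generating set $g_1,\dots,g_n$ of $\pi_1(X)$; surjectivity of $f_{\sharp}$ ensures that $g'_i:=f_{\sharp}(g_i)$ generate $\pi_1(X')$. Evaluation at these tuples realises both $\Hom(\pi_1(X'),G)$ and $\Hom(\pi_1(X),G)$ as closed subvarieties of $G^n$, with the first cut out inside the second by the additional conditions $\rho(k)=1$ for $k\in\ker(f_{\sharp})$; in these coordinates the morphism $f_{\sharp}^{*}$ is the identity on tuples, which exhibits it as a closed embedding with the stated image.

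For (b) and (c), I would rely on the classical identification $H^i(X,V_{\tau})\cong H^i(\pi_1(X),V_{\tau})$ in degrees $i\le 1$, valid for any space of the type under consideration and compatible with the map induced by $f$. In degree zero, both sides are the subspace of $V$ fixed by the image of the representation; setting $\tau=\iota\circ\rho'\circ f_{\sharp}$, surjectivity of $f_{\sharp}$ yields $\im(\iota\circ\rho')=\im(\iota\circ\rho'\circ f_{\sharp})$, so $H^0(X',V_{\iota\circ\rho'})$ and $H^0(X,V_{\iota\circ\rho'\circ f_{\sharp}})$ have the same dimension. Combined with (a), this produces the bijection between $\VV^0_r(X',\iota)$ and $\VV^0_r(X,\iota)\cap f_{\sharp}^{*}\Hom(\pi_1(X'),G)$ required in (c); being cut out from the closed embedding of (a), it is an isomorphism of affine varieties. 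In degree one, I would invoke the inflation--restriction five-term exact sequence attached to the group extension $1\to\ker(f_{\sharp})\to\pi_1(X)\to\pi_1(X')\to 1$, which gives injectivity of inflation on $H^1$ with coefficients in $V_{\iota\circ\rho'}$; hence $\dim H^1$ can only grow under $f_{\sharp}^{*}$, so $f_{\sharp}^{*}$ sends $\VV^1_r(X',\iota)$ into $\VV^1_r(X,\iota)$ and restricts there to a closed embedding.

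The delicate point I expect to be the main obstacle is securing the scheme-theoretic statement in (a): that the image of $f_{\sharp}^{*}$ is really the reduced closed subscheme cut out by the conditions $\rho(k)=1$ for $k\in\ker(f_{\sharp})$, rather than merely containing it set-theoretically. Once the embeddings into $G^n$ are set up correctly, this should follow from the universal property of the quotient $\pi_1(X)\surj\pi_1(X')$ together with the standard description of representation varieties as affine schemes. The remaining cohomological input in (b) and (c) then reduces to two routine ingredients: invariance of the fixed subspace under surjective quotients of the acting group, and inflation--restriction for $H^1$.
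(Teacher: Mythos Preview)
Your argument is correct. Note, however, that the paper does not actually prove this lemma: it simply records the statement and refers to \cite[Cor.~5.8]{PS-16} for a proof in a more general setting. So there is no ``paper's own proof'' to compare against; your outline supplies a self-contained substitute.

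Two minor remarks. First, your worry about the scheme-theoretic content of (a) is unnecessary here: the paper explicitly declares that ``affine varieties and analytic germs are always reduced,'' so it suffices to check that $f_{\sharp}^{*}$ is injective with Zariski-closed image, which your $G^n$ description gives immediately. Second, the identification $H^1(X,V_\tau)\cong H^1(\pi_1(X),V_\tau)$ that you invoke is used elsewhere in the paper as well (see the paragraph after the lemma, citing \cite[Cor.~5.11]{PS-16}), so it is consistent with the paper's framework; for spaces with the homotopy type of a connected CW-complex this is standard, via the classifying map $X\to K(\pi,1)$, which is a $2$-equivalence. With these points noted, your inflation--restriction argument for $i=1$ and the invariants argument for $i=0$ are exactly the expected ingredients and go through without difficulty.
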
 

Finally, suppose $K(\pi,1)$ is a classifying space for the group 
$\pi$. In this case, we will simply denote the corresponding characteristic 
varieties by $\VV^i_r(\pi,\iota)$. If $X$ is a pointed space, and 
$f\colon X\to K(\pi,1)$ is a classifying map for its fundamental 
group, then the induced isomorphism  
$f_{\sharp}^{*}\colon  \Hom(\pi,G) \to \Hom(\pi,G)$ 
restricts to isomorphisms $\VV^1_r(\pi,\iota) \cong \VV^1_r(X,\iota)$ 
for all $r\ge 0$, see \cite[Cor.~5.11]{PS-16}.

\subsection{Flat connections}
\label{subsec:flat}

We now turn to the infinitesimal counterparts of the above constructions, 
following closely the exposition from \cite[\S\S2--3]{MPPS}. 
Let $(A,d)$ be commutative, differential graded algebra (for short, a $\dga$) 
over $\C$, and let $\g$ be a Lie algebra, also over $\C$.  The tensor product 
$A\otimes \g$ has the structure of a graded, differential Lie algebra, with 
Lie bracket given by
$[\alpha\otimes x, \beta\otimes y]= \alpha \beta \otimes [x,y]$, 
and differential given by $\partial (\alpha\otimes x) = d \alpha \otimes x$.
Clearly, this construction is functorial in both $A$ and $\g$.

The algebraic analogue of the $G$-representation variety $\Hom(\pi,G)$
is the (bi-functorial) pointed set $(\F(A,\g), 0)$  of {\em $\g$-valued flat connections}\/ 
on $A$, consisting of of degree $1$ elements in $A\otimes \g$ that
satisfy the Maurer--Cartan equation,
\begin{equation}
\label{eq:flat}
\partial\omega + \tfrac{1}{2} [\omega,\omega] = 0 .
\end{equation}

The $\dga$ $A$ is said to be connected if $A^0$ is the $\C$-span of $1$. 
For such an algebra, the bilinear map $P\colon A^1\times \g\to A^1\otimes \g$,
$(\eta,g)\mapsto \eta\otimes g$ induces a map
$P\colon H^1(A)\times \g \to \F(A,\g)$. 
The {\em essentially rank one}\/ part of $\F(A,\g)$ is the set
\begin{equation}
\label{eq:f1}
\F^1(A,\g):=P(H^1(A)\times \g).
\end{equation}

Suppose now that both $A^1$ and $\g$ are finite-dimensional. 
Then the set $\F(A,\g)$ has a natural structure of affine variety, which 
we shall call the {\em $\g$-variety of flat connections}\/ on $A$. 
Moreover, $\F^1(A,\g)$ is an irreducible, Zariski-closed subset 
of $\F(A,\g)$.  More precisely, $\F^1(A,\g)$ is either $\{0\}$, or the cone on
$\PP(H^1(A)) \times \PP(\g)$. 

An alternate interpretation of these varieties is given in \cite[\S4]{MPPS}.  
Set $A_i=(A^i)^*$, and let $\L(A_1)$ be the free Lie algebra on the dual 
vector space $A_1$.  We then define the {\em holonomy Lie algebra}\/ 
of $A$ as 
\begin{equation}
\label{eq:holo}
\h(A) := \L(A_1) / \ideal(\im(d^*+\cup^*)),
\end{equation}
where $d^*\colon A_2\to A_1=\L^1(A_1)$
and $\cup^*\colon A_2\to A_1\wedge A_1=\L^2(A_1)$ are the 
maps dual to the differential and the multiplication map in $A$, 
respectively.   Clearly, this construction is functorial.
Moreover, as shown in \cite[Prop.~4.5]{MPPS}, the canonical isomorphism
$A^1\otimes \g \cong \Hom (A_1,\g)$ restricts to a natural isomorphism 
\begin{equation}
\label{eq:holoflat}
\F(A,\g) \cong  \Hom_{\Lie} (\h(A), \g)
\end{equation} 
which identifies $\F^1(A,\g)$ with the set of Lie algebra morphisms from 
$\h(A)$ to $\g$ whose image is a vector subspace of dimension at most $1$.

Finally, let $\theta\colon \g\to \gl(V)$ 
be a finite-dimensional representation, and consider the set
\begin{equation}
\label{eq:piatheta}
\Pi(A,\theta)=P(H^1(A)\times \Zero(\det\circ \theta)),
\end{equation}
where $\det\colon \gl(V)\to \C$ is the determinant, and 
$\Zero(f):=\{x\mid f(x)=0\}$ is the zero set of a polynomial function $f$. 
Then $\Pi(A,\theta)$ is a Zariski-closed subset of $\F^1(A,\g)$ 
containing $0$.
Both $\F^1(A,\g)$ and $\Pi(A,\theta)$ behave functorially with respect to
$\dga$ maps. Moreover, $\dga$ maps inducing an $H^1$-isomorphism 
also induce $\F^1$ and $\Pi$-isomorphisms, since the variety $\F^1(A,\g)$ 
depends only on $H^1(A)$ and $\g$, and similarly $\Pi(A,\theta)$ depends 
only on $H^1(A)$ and $\theta$.

\subsection{Resonance varieties}
\label{subsec:res}

Once again, consider a  representation $\theta\colon \g\to \gl(V)$. 
For each flat connection $\omega\in \F(A,\g)$, we turn the 
tensor product $A\otimes V$ into a cochain complex,
\begin{equation}
\label{eq:aomoto}
\xymatrixcolsep{22pt}
\xymatrix{(A\otimes V , d_{\omega})\colon  \
A^0 \otimes V \ar^(.65){d_{\omega}}[r] & A^1\otimes V
\ar^(.5){d_{\omega}}[r]
& A^2\otimes V   \ar^(.55){d_{\omega}}[r]& \cdots },
\end{equation}
using as differential the covariant derivative
$d_{\omega}=d\otimes \id_V + \ad_{\omega}$.  Here, if 
we write $\omega=\sum_k a_k \otimes g_k$, for some 
$a_k\in A^1$ and $g_k\in \g$, then
$\ad_{\omega}(a\otimes v) = 
\sum_{k} a_k  a \otimes \theta(g_k)(v)$, 
for all $a \in A$ and $v\in V$.   It is readily checked that
the flatness condition on $\omega$ insures that $d_{\omega}^2=0$.

\begin{definition}
\label{eq:resvar}
The {\em resonance varieties}\/ of the $\dga$ $A^{\hdot}$ in 
degree $i\ge 0$ and depth $r\ge 0$ with respect to a representation 
$\theta\colon \g\to \gl(V)$ are the sets 
\begin{equation}
\label{eq:rra}
\RR^i_r(A, \theta)= \{\omega \in \F(A,\g)
\mid \dim H^i(A \otimes V, d_{\omega}) \ge  r\}.
\end{equation}
\end{definition}

For instance, $\RR^0_1(A,\theta)$ consists of those 
flat connections $\omega=\sum_k a_k \otimes g_k$ 
for which there exists a non-zero vector 
$v\in V$ such that $\theta(g_k)(v)=0$, for all $k$, 
see \cite[Lem.~2.3]{MPPS}, provided $A$ is  connected. 
For each $i\ge 0$, the sequence $\{\RR^i_r(A,\theta)\}_{r\ge 0}$ is 
a descending filtration of the set $\F(A,\g) = \RR^i_0(A,\theta)$. 
 We will refer to the pairs 
\begin{equation}
\label{eq:flatrv}
\left( \F(A,\g) , \RR^i_r(A, \theta) \right)
\end{equation}
as the (global) {\em infinitesimal embedded jump loci}\/ of $A$ with respect 
to $\theta$.  In the rank one case, i.e., the case when 
$\theta$ is the canonical identification $\C\to \gl_1(\C)$, we will simply 
write $\RR^i_r(A)$ for the corresponding sets.   If $H^{\hdot}(A)$ 
is the cohomology algebra of $A$, we will view it as a 
$\dga$ with differential $d=0$, and will denote the corresponding 
jump loci as $\RR^i_r(H^{\hdot}(A), \theta)$, or simply 
$\RR^i_r(H^{\hdot}(A))$ in the rank $1$ case.

Now suppose $A$ is a connected, finite-dimensional $\dga$, and the map 
$\theta\colon \g\to \gl(V)$ is a finite-dimensional representation of a 
finite-dimensional Lie algebra.  Then the sets $\RR^i_r(A, \theta)$ 
are closed subvarieties of $\F(A,\g)$, often referred 
to as the {\em resonance varieties}\/ of $A$ with respect to~$\theta$. 

They enjoy the following useful naturality property, proved in greater 
generality in \cite[Cor.~5.10]{PS-16}.

\begin{lemma}
\label{lem:rnat}
In the above setup, suppose $\psi\colon A'\to A$ is a morphism between two 
such $\dga$s, which is injective in degree $1$.   Then the natural morphism
\begin{equation}
\label{natinf}
\xymatrixcolsep{18pt}
\xymatrix{
\psi^{*} = \psi\otimes \id\colon  \F(A' , \g) \ar[r]&  \F(A , \g)
}
\end{equation}
is a closed embedding, which restricts to embeddings 
$\RR^i_r(A',\theta) \to \RR^i_r(A,\theta)$ for all $i\le 1$ 
and $r\ge 0$, and induces isomorphisms between $\RR^0_r(A',\theta)$
and $\RR^0_r(A,\theta) \cap \psi^{*}  \F(A' , \g)$, for all $r\ge 0$.
\end{lemma}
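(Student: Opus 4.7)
The plan is to mirror the proof of the topological naturality result (Lemma~\ref{lem:cvnat}), working in the infinitesimal setting where the representation variety is replaced by the variety of flat connections and twisted cohomology by the covariant derivative complex.

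First I would establish the ambient closed embedding. The map $\psi \otimes \id_{\g}\colon A'^1 \otimes \g \to A^1 \otimes \g$ is $\C$-linear and injective, since $\psi$ is injective in degree $1$ and tensoring over a field preserves injections; a linear injection between finite-dimensional vector spaces is automatically a closed embedding of affine spaces. Compatibility of $\psi$ with differentials and products ensures that $\psi \otimes \id_{\g}$ carries Maurer--Cartan elements to Maurer--Cartan elements, so $\psi^{*}$ restricts to a closed embedding $\F(A',\g) \hookrightarrow \F(A,\g)$.

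Next I would compare covariant derivative complexes pointwise. For $\omega \in \F(A',\g)$, the map $\psi \otimes \id_V \colon (A' \otimes V, d_{\omega}) \to (A \otimes V, d_{\psi^{*}\omega})$ is a cochain map that is an isomorphism in degree $0$ (using $A'^0 = A^0 = \C$ from connectedness of both algebras) and injective in degree $1$. A short diagram chase then shows the induced map on $H^1$ is injective: given a cocycle $z \in A'^1 \otimes V$ with $\psi(z) = d_{\psi^{*}\omega}(c)$ for some $c \in A^0 \otimes V$, the degree-$0$ isomorphism lifts $c$ to $c' \in A'^0 \otimes V$, and injectivity of $\psi$ in degree $1$ forces $z = d_{\omega}(c')$. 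Consequently $\psi^{*}(\RR^1_r(A',\theta)) \subseteq \RR^1_r(A,\theta)$, and the restriction of the ambient closed embedding to these Zariski-closed subsets is again a closed embedding; the case $i=0$ is subsumed by the sharper statement below.

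For the $H^0$ claim I would write $\omega = \sum_k a_k \otimes g_k$ with $\{a_k\} \subset A'^1$ linearly independent. Since $\psi$ is injective in degree $1$, $\{\psi(a_k)\} \subset A^1$ remains linearly independent, so for any $v \in V$ both $d_{\omega}(1\otimes v)=0$ and $d_{\psi^{*}\omega}(1\otimes v)=0$ are equivalent to the condition $\theta(g_k)(v)=0$ for every $k$. Hence $\dim H^0(A' \otimes V, d_{\omega}) = \dim H^0(A \otimes V, d_{\psi^{*}\omega})$ for all $\omega \in \F(A',\g)$, which yields the claimed equality $\psi^{*}\RR^0_r(A',\theta) = \RR^0_r(A,\theta) \cap \psi^{*}\F(A',\g)$, with $\psi^{*}$ restricting to an isomorphism between these loci.

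The main obstacle is the $H^1$-injectivity step, which is the one place the argument genuinely couples the $\dga$ hypothesis on $\psi$ with the covariant derivative formula; everything else reduces to linear algebra. The argument uses injectivity of $\psi$ exactly in degrees $0$ and $1$, which is precisely why the statement is restricted to $i \leq 1$, and extending to higher cohomological degrees would demand corresponding injectivity hypotheses on $\psi$ that are not available in the stated generality.
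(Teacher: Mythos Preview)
Your argument is correct. The paper does not actually prove this lemma in-text; it simply cites \cite[Cor.~5.10]{PS-16} for a more general statement, so there is no proof here to compare against. Your self-contained argument---linear injectivity in degree~$1$ giving the closed embedding, the chain-map verification and the degree-$0$/degree-$1$ diagram chase for $H^1$-injectivity, and the explicit $H^0$ computation via a linearly independent expansion of $\omega$---is exactly the expected direct proof and matches the description of $\RR^0_1$ recorded just after Definition~\ref{eq:resvar}.
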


\subsection{Algebraic models and germs of jump loci}
\label{subsec:models}

Given a topological space $X$, we let $\Omega^{\hdot}(X)$ be the 
Sullivan algebra \cite{Su} of piecewise polynomial $\C$-forms on $X$. 
This $\dga$ has the property that $H^{\hdot}(\Omega(X))\cong H^{\hdot}(X,\C)$, 
as graded rings.  
A $\dga$ $A$ is said to be a {\em model}\/ for $X$ if $A$ may be connected 
by a zig-zag of quasi-isomorphisms to $\Omega(X)$.  For instance, if $M$ is 
a smooth manifold, then the de Rham algebra $\Omega_{\text{dR}} (M)$ of 
smooth $\C$-forms on $M$ is a model for this manifold.  We say that $A$ 
is a {\em finite}\/ model for $X$ if the dimension of $A$ (viewed as a $\C$-vector 
space) is finite, and $A$ is connected.

Assume now that $X$ is a path-connected space having the homotopy type 
of a finite CW-complex.  Let $\pi=\pi_1(X)$, let $\iota\colon G\to \GL(V)$ be 
a rational  representation, and let $\theta \colon \g \to \gl(V)$ be its 
tangential representation.  We will use frequently the following result, 
proved in \cite[Thm.~B(1)]{DP-ccm}. 

\begin{theorem}
\label{thm:b}
Suppose that $X$ admits a finite $\dga$ model $A$. 
There is then an analytic isomorphism of germs, 
$\F(A,\g)_{(0)} \isom \Hom(\pi,G)_{(1)}$, restricting to isomorphisms 
$\RR^i_r(A,\theta)_{(0)} \isom \VV^i_r(X,\iota)_{(1)}$, for all $i,r$. 
\end{theorem}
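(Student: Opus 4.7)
The plan is to realize both sides via Goldman--Millson deformation theory and then transport along a DGLA quasi-isomorphism. First, I would interpret the germ $\Hom(\pi,G)_{(1)}$ in terms of Maurer--Cartan elements: by the theory of Goldman and Millson, deformations of the trivial representation $1\colon \pi\to G$ are controlled by the differential graded Lie algebra $L_X:=\Omega^{\hdot}(X)\otimes \g$, with its natural differential and bracket. The Maurer--Cartan set of $L_X$ (modulo the gauge action of the identity component), viewed as a germ at the origin, is analytically isomorphic to $\Hom(\pi,G)_{(1)}$; this can be seen either via Sullivan's $1$-minimal model and the identification of MC elements with Lie algebra homomorphisms from the Malcev completion of $\pi$, or directly by integrating flat connections along paths to produce holonomy representations.

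Second, I would transfer to the finite model $A$. Since $A$ is a finite $\dga$ model for $X$, there is a zig-zag of $\dga$ quasi-isomorphisms connecting $A$ to $\Omega(X)$. Tensoring with $\g$ preserves quasi-isomorphisms (as $\g$ is $\C$-flat), so $A\otimes \g$ and $L_X$ become quasi-isomorphic DGLAs. The refined Goldman--Millson theorem (that quasi-isomorphic DGLAs have isomorphic formal deformation functors) then identifies the formal MC germs of $A\otimes \g$ and $L_X$ at $0$. The finite-dimensionality of $A^1$ guarantees that $\F(A,\g)$ is a genuine affine variety whose completion at $0$ agrees with the formal deformation functor, and a standard Artin approximation argument upgrades the formal equivalence to an analytic germ isomorphism $\F(A,\g)_{(0)}\isom \Hom(\pi,G)_{(1)}$.

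Third, I would extend this correspondence to the jump loci via the twisted de Rham complex. For each $\omega\in \F(A,\g)$, the covariant derivative $d_\omega$ on $A\otimes V$ is a finite model for the twisted de Rham complex $(\Omega(X)\otimes V, d_{\omega'})$, where $\omega'$ is the MC element matching $\omega$ under the quasi-isomorphism. A parametrized version of the de Rham theorem with local system coefficients identifies $H^i(A\otimes V, d_\omega)$ with $H^i(X, V_{\iota\circ\rho})$ where $\rho$ is the representation corresponding to $\omega$ under the germ isomorphism established above. Because the ranks of these cohomology groups coincide pointwise, the analytic isomorphism $\F(A,\g)_{(0)}\isom \Hom(\pi,G)_{(1)}$ carries $\RR^i_r(A,\theta)_{(0)}$ bijectively onto $\VV^i_r(X,\iota)_{(1)}$ for every $i,r$.

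The main obstacle is the simultaneous upgrade from formal to analytic isomorphism together with a uniform control of the jump loci structure. Goldman--Millson in its classical form yields only an equivalence of formal deformation functors; one must exploit the finiteness of $A$ to represent this functor by an affine germ and then construct an explicit analytic (not merely formal) gauge. Then, to ensure the jump strata match, one must show the quasi-isomorphism between $(A\otimes V, d_\omega)$ and the twisted de Rham complex varies in families of $\omega$, so that cohomology ranks jump on the same analytic subset on both sides. This is where the bulk of the work in \cite{DP-ccm} concentrates.
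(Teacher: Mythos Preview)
The paper does not prove this theorem; it is quoted verbatim as \cite[Thm.~B(1)]{DP-ccm} and used as a black box throughout. So there is no ``paper's own proof'' to compare against beyond the citation, and your outline is in fact a reasonable sketch of the strategy carried out in \cite{DP-ccm}: control $\Hom(\pi,G)_{(1)}$ by the DGLA $\Omega(X)\otimes\g$, transfer along the zig-zag of quasi-isomorphisms to $A\otimes\g$, invoke the invariance of the deformation functor under DGLA quasi-isomorphism, and then upgrade from formal to analytic using finiteness of $A$ and Artin approximation, finally tracking the twisted complexes in families to match the jump strata.

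One point deserves care. You write that the germ $\Hom(\pi,G)_{(1)}$ is the MC set of $L_X$ ``modulo the gauge action of the identity component''. Quotienting by gauge would produce the germ of the \emph{character} variety (i.e., $\Hom(\pi,G)\sslash G$), not the representation variety. In the framework of \cite{DP-ccm} one works with the \emph{augmented} DGLA built from the augmentation ideal $A^+$ of a connected $\dga$; since $A^0=\C$, this DGLA vanishes in degree $0$, the gauge group is trivial, and the bare MC set already models $\Hom(\pi,G)_{(1)}$. Your first paragraph should be adjusted accordingly; otherwise the sketch is aligned with the cited source, including your correct identification of the formal-to-analytic passage and the family-wise control of the Aomoto complexes as the substantive technical steps.
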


For later use, we will need the following lemmas. 

\begin{lemma}
\label{lem:abf}
Let $\pi=\Z^n$, with $n\ge 1$. Denote by $A_0$ the $\dga$
$(\bigwedge^{\hdot}H^1(\pi),d=0)$. Then $A_0$ is a finite model for 
the torus $T^n=K(\pi, 1)$, and
\[
(\F(A_0,\g),\RR^1_1(A_0,\theta)) = (\F^1(A_0,\g),\Pi(A_0,\theta)), 
\] 
for every  finite-dimensional Lie representation 
$\theta \colon \g\to \gl(V)$ of $\g=\sl_2$ or $\sol_2$. 
\end{lemma}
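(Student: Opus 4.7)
The plan is to establish the three independent claims packaged in the lemma: (a) $A_0$ is a finite model for $T^n$; (b) $\F(A_0,\g)=\F^1(A_0,\g)$; and (c) $\RR^1_1(A_0,\theta)=\Pi(A_0,\theta)$. For (a) I would invoke the classical formality of the torus. Explicitly, pick closed translation-invariant $1$-forms $\eta_1,\dots,\eta_n$ on $T^n$ whose classes form a basis of $H^1(T^n,\C)$; on the abelian Lie group $T^n$ every invariant form is closed, so the cdga morphism $A_0\to\Omega_{\DR}(T^n)$ sending $e_i\mapsto\eta_i$ is well-defined. Both sides have cohomology equal to $\bwedge^{\hdot}H^1(T^n,\C)$ and the map is the identity there, hence a quasi-isomorphism. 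Since $A_0$ is connected and of dimension $2^n$, it is a finite model.

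For (b), the hypothesis $d=0$ reduces the Maurer--Cartan equation to $[\omega,\omega]=0$. Writing $\omega=\sum_{i=1}^n e_i\otimes g_i$ in a basis of $A_0^1$, the linear independence of $\{e_i\wedge e_j\}_{i<j}$ in $A_0^2$ forces $[g_i,g_j]=0$ for all $i,j$. The decisive input is the Lie-theoretic fact that in both $\sl_2$ and $\sol_2$ the centralizer of any non-zero element is exactly $1$-dimensional; for $\sl_2$ this is standard (nilpotent and semisimple elements each have $1$-dimensional centralizer), and for $\sol_2=\spn\{h,e\}$ with $[h,e]=2e$ a direct matrix computation gives $[ah+be,ch+de]=2(ad-bc)e$, so the centralizer of any non-zero $ah+be$ is the line it spans. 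Consequently any pairwise commuting family in $\g$ spans at most a line; pick $g\in\g$ and scalars $c_i$ with $g_i=c_i g$ and set $\eta=\sum_i c_i e_i$ to obtain $\omega=\eta\otimes g\in\F^1(A_0,\g)$.

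For (c) the containment $\Pi\subseteq\RR^1_1$ is the general fact recalled in \S\ref{subsec:flat}. For the reverse, take $\omega=\eta\otimes g\in\F^1(A_0,\g)$. If $\eta=0$ then $\omega=0$, $H^1(A_0\otimes V,d_0)=H^1(A_0)\otimes V\ne 0$, and $0\in\Pi$ holds trivially. If $\eta\ne 0$, complete $e_1:=\eta$ to a basis $e_1,\dots,e_n$ of $A_0^1$ and note that $d_\omega(e_I\otimes v)=e_1\wedge e_I\otimes\theta(g)(v)$, which vanishes precisely when $1\in I$. A direct count on the degree-$1$ piece then gives $\ker d_\omega=V\oplus \bigoplus_{i=2}^n \ker\theta(g)$ and $\im d_\omega=e_1\otimes \im\theta(g)$, so
\[
\dim H^1(A_0\otimes V,d_\omega)=n\cdot\dim\ker\theta(g).
\]
Hence $\omega\in\RR^1_1(A_0,\theta)$ iff $\det\theta(g)=0$ iff $\omega\in\Pi(A_0,\theta)$, completing the proof.

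The only substantive point is the centralizer statement invoked in (b); everything else is formal manipulation. This is also exactly what confines the conclusion to $\sl_2$ and $\sol_2$: for any $\g$ admitting two linearly independent commuting elements (e.g.\ $\sl_n$ with $n\ge 3$, which contains a Cartan subalgebra of dimension $\ge 2$), the reduction $\F(A_0,\g)=\F^1(A_0,\g)$ already fails, consistent with the strict-inclusion phenomena recorded later in the paper.
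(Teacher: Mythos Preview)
Your proof is correct and complete. The paper's own proof takes a different, much terser route: it simply cites the formality of $T^n$ (Sullivan) for part (a), and then observes that $A_0$ is the Chevalley--Eilenberg cochain \dga\ of the abelian Malcev Lie algebra of $\Z^n$, so that Lemma~4.14 and Theorem~4.15 of \cite{MPPS} apply to give the second claim directly. In other words, the paper defers the actual work to the general machinery developed in \cite{MPPS}.

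Your argument unpacks what those cited results amount to in this particular case. The key structural input---that every nonzero element of $\sl_2$ or $\sol_2$ has a one-dimensional centralizer---is exactly the content that makes part (b) work, and your explicit Aomoto-complex computation in part (c), yielding $\dim H^1(A_0\otimes V,d_\omega)=n\cdot\dim\ker\theta(g)$ for $\omega=\eta\otimes g$ with $\eta\ne 0$, is clean and gives both containments at once (so the preliminary appeal to the general inclusion $\Pi\subseteq\RR^1_1$ is in fact unnecessary). Your approach has the virtue of being entirely self-contained and of making transparent why the result is specific to these two Lie algebras, as you note in your closing remark. The paper's approach has the virtue of brevity and of situating the lemma within the broader framework of \cite{MPPS}, but at the cost of requiring the reader to consult that reference.
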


\begin{proof}
Since $T^n$ is a formal  space in the sense of Sullivan \cite{Su},
the $\dga$ $A_0$ is a finite model of $T^n$.  On the other hand, 
$A_0$ is the Chevalley--Eilenberg cochain $\dga$ of the (abelian) 
Malcev Lie algebra of $\pi= \Z^n$. Lemma 4.14 and Theorem 4.15 
from \cite{MPPS} together imply our second claim.
\end{proof}

\begin{lemma}
\label{lem:unip}
Let $\iota\colon G\to \GL(V)$ be a rational representation of a unipotent group.
If $\theta= d_1(\iota) \colon\g \to \gl(V)$, then $\F^1(A,\g)=\Pi(A,\theta)$, 
for any connected $\dga$ $A$. 
\end{lemma}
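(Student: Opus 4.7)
The plan is to unravel the definitions of $\F^1(A,\g)$ and $\Pi(A,\theta)$ and to reduce the claim to the statement that $\det\theta(g) = 0$ for every $g \in \g$. Recall from \eqref{eq:f1} and \eqref{eq:piatheta} that
\[
\F^1(A,\g) = P(H^1(A) \times \g), \qquad \Pi(A,\theta) = P(H^1(A) \times \Zero(\det\circ\theta)),
\]
so $\Pi(A,\theta) \subseteq \F^1(A,\g)$ holds tautologically. For the reverse inclusion, it is therefore enough to prove that $\Zero(\det\circ\theta) = \g$, that is, $\theta(g)$ is a singular endomorphism of $V$ for every $g \in \g$.

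Here I would invoke the hypothesis that $G$ is unipotent. Since $\iota\colon G \to \GL(V)$ is a rational representation of a unipotent linear algebraic group, a theorem of Kolchin provides a basis of $V$ in which $\iota(G)$ sits inside the group of upper-triangular unipotent matrices. Differentiating at $1$, the tangential representation $\theta = d_1(\iota)$ lands inside the Lie algebra of strictly upper-triangular matrices; in particular, $\theta(g)$ is nilpotent for every $g \in \g$. A nilpotent endomorphism of a finite-dimensional vector space has all eigenvalues equal to $0$, hence $\det\theta(g) = 0$. This gives $\Zero(\det\circ\theta) = \g$ and completes the identification $\F^1(A,\g) = \Pi(A,\theta)$.

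I do not foresee a serious obstacle: the argument is essentially a one-line consequence of Kolchin's theorem applied to the rational representation $\iota$, together with the observation that $\Pi(A,\theta)$ and $\F^1(A,\g)$ differ only in the constraint on the Lie-algebra factor. The only mild point of care is to pass from unipotence of $\iota(G)$ as an algebraic-group statement to nilpotence of $\theta(\g)$ as a Lie-algebra statement, which is immediate since $\theta(g) = \frac{d}{dt}\bigr|_{t=0}\iota(\exp(tg))$ and a one-parameter subgroup of unipotent matrices has nilpotent infinitesimal generator. Note that the connectedness of $A$ is used only implicitly through the definition \eqref{eq:f1} of $P$, and no hypothesis on the dimension of $A$ is required.
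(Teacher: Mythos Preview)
Your proof is correct and follows essentially the same approach as the paper: both arguments invoke the classical result (Kolchin's theorem, or the version in Humphreys' \emph{Linear Algebraic Groups}) that a rational representation of a unipotent group lands in the upper-triangular unipotent subgroup, differentiate to conclude that $\det\circ\theta$ vanishes identically on $\g$, and then read off the equality $\F^1(A,\g)=\Pi(A,\theta)$ directly from the definitions \eqref{eq:f1} and \eqref{eq:piatheta}. Your write-up is slightly more explicit about the passage from unipotence of $\iota(G)$ to nilpotence of $\theta(\g)$, but the substance is the same.
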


\begin{proof}
Since the group $G$ is unipotent, the homomorphism $\iota$ takes values 
in the upper triangular unipotent subgroup of $\GL_m$, where $m=\dim V$, 
by a classical result in representation theory \cite{Hu75}. Hence, the 
function $\det \circ \theta\colon \g\to \C$ is identically $0$.   
The claim then follows from the construction of $\F^1$ and $\Pi$.
\end{proof}

In the case when $G=\C$, more can be said. 
\begin{lemma}
\label{lem:unip-bis}
$\Hom (\Z^n, \C)_{(1)} = \VV^1_1(\Z^n, \iota)_{(1)}$, for all $n\ge 1$.
\end{lemma}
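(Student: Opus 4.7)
The plan is to reduce the topological statement to an infinitesimal one via the $\dga$ model $A_0 = (\bigwedge^{\hdot}H^1(\Z^n), d=0)$, and then observe that \emph{all} relevant subvarieties of the variety of flat connections on $A_0$ coincide in the present setting. Note that $\g=\C$ (the Lie algebra of $G=\C$) is $1$-dimensional and abelian, and that $\iota\colon\C\to\GL(V)$ is a rational representation of the unipotent group $(\C,+)$, so Lemma~\ref{lem:unip} applies.

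First, I would invoke Lemma~\ref{lem:abf} to ensure that $A_0$ is a finite model of $T^n = K(\Z^n,1)$, and then apply Theorem~\ref{thm:b} to obtain an analytic germ isomorphism
\[
\bigl(\F(A_0,\C),\, \RR^1_1(A_0,\theta)\bigr)_{(0)}
\isom \bigl(\Hom(\Z^n,\C),\, \VV^1_1(\Z^n,\iota)\bigr)_{(1)}.
\]
Thus it suffices to show the global equality $\RR^1_1(A_0,\theta) = \F(A_0,\C)$.

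Next, I would unpack the Maurer--Cartan condition on $A_0\otimes\C$. Since $d=0$ and $\g=\C$ is abelian, the equation is vacuous, so $\F(A_0,\C) = A_0^1 \otimes \C$. Moreover, because $\g=\C$ is $1$-dimensional, every element of $A_0^1\otimes\C$ is already a pure tensor of the form $\eta\otimes 1$ with $\eta\in H^1(A_0)=A_0^1$, so the image of the map $P$ exhausts $\F(A_0,\C)$; that is, $\F^1(A_0,\C) = \F(A_0,\C)$.

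Finally, Lemma~\ref{lem:unip} gives $\Pi(A_0,\theta) = \F^1(A_0,\C)$, since $A_0$ is connected and $G=\C$ is unipotent. Because $H^1(A_0)\ne 0$ (as $n\ge 1$), the general inclusions recalled in \S\ref{subsec:intro2} yield
\[
\F(A_0,\C)\;=\;\F^1(A_0,\C)\;=\;\Pi(A_0,\theta)\;\subseteq\;\RR^1_1(A_0,\theta)\;\subseteq\;\F(A_0,\C),
\]
forcing equality throughout. Transferring via the germ isomorphism from Theorem~\ref{thm:b} concludes the proof. There is no real obstacle here; the content is entirely in noticing that the $1$-dimensionality of $\g=\C$ collapses $\F^1$ onto $\F$, and then Lemma~\ref{lem:unip} closes the sandwich.
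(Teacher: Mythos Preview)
Your proof is correct and follows essentially the same route as the paper's: reduce to the infinitesimal picture via Theorem~\ref{thm:b} and the model $A_0$ from Lemma~\ref{lem:abf}, observe that $\F(A_0,\C)=\F^1(A_0,\C)$, apply Lemma~\ref{lem:unip} to get $\F^1(A_0,\C)=\Pi(A_0,\theta)$, and close the sandwich with the inclusion $\Pi(A_0,\theta)\subseteq\RR^1_1(A_0,\theta)$. The only cosmetic difference is that the paper cites \cite[Thm.~1.2]{MPPS} for this last inclusion, while you appeal to the statement recorded in \S\ref{subsec:intro2}; your justification of $\F^1(A_0,\C)=\F(A_0,\C)$ via the $1$-dimensionality of $\g$ is also a touch more explicit than the paper's ``clearly''.
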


\begin{proof}
Consider the $\dga$ $A_0$ from Lemma \ref{lem:abf}. We infer from 
Theorem \ref{thm:b} that 
\begin{equation}
\label{eq:homzn}
(\Hom (\Z^n, \C), \VV^1_1(\Z^n, \iota))_{(1)} \cong 
(\F(A_0,\C),\RR^1_1(A_0,\theta))_{(0)}.
\end{equation}
Clearly, $\F(A_0,\C)=\F^1(A_0,\C)$. On the other hand, 
$\F^1(A_0,\C)=\Pi(A_0,\theta)$, by the preceding lemma. 
Finally, $\Pi(A_0,\theta) \subseteq \RR^1_1(A_0,\theta)$, 
by \cite[Thm.~1.2]{MPPS}.  
Therefore, $\F(A_0,\C)=\RR^1_1(A_0,\theta)$.
Our claim then follows from \eqref{eq:homzn}.
\end{proof}

\section{Quasi-K\"{a}hler manifolds and admissible maps}
\label{sect:qkahler}

\subsection{Admissible maps to curves}
\label{subsec:admissible}

Let $M$ be a connected, complex manifold.  We say that $M$ is 
a {\em quasi-K\"{a}hler manifold}\/  if $M= \oM \setminus D$, 
where $\oM$ is a connected, compact K\"{a}hler manifold and $D$ 
is a normal crossing divisor.  
A map $f\colon M \to C$ from such a manifold to a 
smooth complex curve $C$ is said to be {\em admissible}\/ if $f$ is 
holomorphic and surjective, and $f$ admits a holomorphic, surjective 
extension between suitable compactifications, $\bar{f}\colon \oM \to \oC$, 
such that all the fibers of $\bar{f}$ are connected. It is readily checked 
that the homomorphism on fundamental groups induced by such a 
map, $f_{\sharp}\colon \pi_1(M)\to \pi_1(C)$,  is surjective. 

We denote by $\cE(M)$ the family of admissible 
maps $f\colon M \to M_f$ to curves with negative 
Euler characteristic, modulo automorphisms of the target, 
and we denote by $f_{\sharp}\colon \pi\surj \pi_f$ the corresponding 
induced homomorphisms.  
Deep work of Arapura \cite{Ar} characterizes   
those irreducible components of the rank one characteristic 
variety $\VV^1_1(M)$ which contain the origin of the character 
group $\Hom(\pi, \C^{\times})$: all such components are connected, 
affine subtori, which can be described in terms of admissible maps, 
as follows.  

\begin{theorem}[\cite{Ar}] 
\label{thm:arapura}
For a quasi-K\"{a}hler manifold $M$, the set $\cE(M)$ is finite. 
Moreover, the correspondence 
$f \leadsto f_{\sharp}^{*} \Hom (\pi_f, \C^{\times})$
establishes a bijection between $\cE(M)$ and the 
set of positive-dimensional, irreducible components of 
$\VV^1_1(M)$ passing through $1$.
\end{theorem}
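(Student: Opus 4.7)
The plan is to verify that the correspondence $f\leadsto f_{\sharp}^{*}\Hom(\pi_f,\C^\times)$ is well-defined, injective, and surjective onto positive-dimensional components of $\VV^1_1(M)$ through $1$; the finiteness of $\cE(M)$ will then be automatic from the fact that $\VV^1_1(M)$, being a Zariski-closed subvariety of the algebraic group $\Hom(\pi,\C^\times)$, has only finitely many irreducible components.

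First I would check that each admissible map $f\colon M\to M_f$ gives rise to a positive-dimensional irreducible component of $\VV^1_1(M)$ containing $1$. Since $f_{\sharp}\colon \pi\surj \pi_f$ is surjective by admissibility, Lemma \ref{lem:cvnat} shows that $f_{\sharp}^{*}\colon \Hom(\pi_f,\C^\times) \to \Hom(\pi,\C^\times)$ is a closed embedding; its image $W_f$ is an irreducible algebraic subtorus of dimension $b_1(M_f)$. The assumption $\chi(M_f)<0$ forces $b_1(M_f)\ge 2$, so $W_f$ is positive-dimensional and passes through $1$. Moreover, Example \ref{ex:2cw} applied over $M_f$ shows $\VV^1_1(M_f) = \Hom(\pi_f,\C^\times)$, and Lemma \ref{lem:cvnat} identifies the pullback of this variety with $W_f$; hence $W_f\subseteq \VV^1_1(M)$.

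Next, I would address surjectivity: every positive-dimensional irreducible component $W$ of $\VV^1_1(M)$ through $1$ equals some $W_f$. The main inputs here are (i) the structure theorem that positive-dimensional components of characteristic varieties containing a torsion point are translated algebraic subtori (so that $W$, containing $1$, is itself a subtorus), and (ii) the mixed Hodge structure on $H^1(M,\C)$. From the Zariski tangent space $T_1 W\subseteq H^1(M,\C)$, the plan is to extract two linearly independent holomorphic $1$-forms $\alpha,\beta$ with $\alpha\wedge\beta=0$; a Castelnuovo--de~Franchis-type pencil theorem adapted to the quasi-K\"ahler setting then produces a holomorphic map $M\to C$ to a smooth curve. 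Taking the Stein factorization of a suitable holomorphic extension to $\oM$ yields an admissible map $f\in \cE(M)$ whose associated subtorus is $W$; negativity of $\chi(M_f)$ is forced by the positive-dimensionality of $W$ together with the fact that $W$ lies in $\VV^1_1(M)$.

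For injectivity, if $W_f=W_g$ for two admissible maps $f,g\in \cE(M)$, then the common subtorus determines the abelianization of the target curve group up to canonical isomorphism, and an appeal to the universal property of the Stein factorization forces $f$ and $g$ to factor through one another, hence to be equivalent modulo an automorphism of the target. The main obstacle throughout is the surjectivity step: adapting the Castelnuovo--de~Franchis pencil construction to the non-compact manifold $M$ requires careful control of the weight-one piece $W_1 H^1(M)$ of the Deligne filtration and of the way positive-dimensional components of $\VV^1_1(M)$ interact with the image of $\Hom(\pi_{\mathrm{ab}},\C^\times)$, which is where the bulk of Arapura's analysis is concentrated.
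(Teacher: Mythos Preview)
The paper does not give a proof of this statement: Theorem~\ref{thm:arapura} is quoted from Arapura \cite{Ar} and used as a black box throughout. So there is no ``paper's own proof'' to compare your proposal against; the authors simply invoke the result.

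That said, your outline is a reasonable sketch of how Arapura's argument actually proceeds in \cite{Ar}: the structure theorem that positive-dimensional components through the origin are subtori, the extraction of wedge-zero holomorphic forms from the tangent space, a Castelnuovo--de~Franchis-type fibration theorem in the quasi-K\"ahler setting, and Stein factorization to produce an admissible map. The well-definedness and the inclusion $W_f\subseteq\VV^1_1(M)$ via Lemma~\ref{lem:cvnat} and Example~\ref{ex:2cw} are exactly how the present paper handles the easy direction in its later arguments. One small correction: you write that $\chi(M_f)<0$ forces $b_1(M_f)\ge 2$, but for a non-compact curve $M_f$ one can have $b_1(M_f)=1$ with $\chi(M_f)<0$ (e.g.\ a once-punctured elliptic curve, or a thrice-punctured sphere has $b_1=2$, but a twice-punctured elliptic curve has $b_1=3$---the point is that $b_1(M_f)\ge 1$ suffices and is what actually follows). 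The genuinely hard part, as you correctly flag, is the surjectivity step, and your proposal remains a high-level plan rather than a proof there; but since the paper itself defers entirely to \cite{Ar}, no more is expected.
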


Let $\abf \colon \pi \surj \pi_{\abf}$ be the projection of the group 
$\pi$ onto its maximal torsion-free abelian quotient. 
We will denote by $f_0 \colon M \to K(\pi_{\abf}, 1)$ 
the corresponding classifying map, which is determined 
up to homotopy by the property that $(f_0)_{\sharp}= \abf$.   
Furthermore, we will write 
\begin{equation}
\label{eq:emab}
E(M)=\cE(M)\cup \{ f_0\}.
\end{equation}

\subsection{Cohomology jump loci of quasi-K\"{a}hler manifolds}
\label{subsec:cjl-qk}

Now let $G$ be a complex linear algebraic group, and let 
$\iota\colon G\to \GL(V)$ be a rational representation.  
By Lemma \ref{lem:cvnat},  the natural inclusion
\begin{equation}
\label{eq:repincl}
\Hom(\pi,G) \supseteq \bigcup_{f\in E(M)} f_{\sharp}^{*} \Hom (\pi_f,G)
\end{equation}
induces  an inclusion 
\begin{equation}
\label{eq:vincl}
\VV^i_r(\pi,\iota) \supseteq \bigcup_{f\in E(M)} f_{\sharp}^{*} \VV^i_r (\pi_f,\iota)
\end{equation}
for $i=0$ and $1$, and for all $r\ge 0$.  In order to prove Theorem \ref{thm:main}
from the Introduction, we want to establish some criteria under which 
the inclusions \eqref{eq:vincl} become equalities near $1$, for $i=1$ and $r=0,1$. 
In the case when $G=\C^{\times}$, $\iota=\id_{\C^{\times}}$, and 
$i=r=1$, equality near $1$  always holds in \eqref{eq:vincl}, and in fact 
is equivalent to Arapura's Theorem \ref{thm:arapura}.  To attack the 
general case, we start with some preliminary observations. 

Suppose first that $b_1(M)=0$. Plainly, $1\not\in \VV^1_1(\pi,\iota)$, and therefore
$\VV^1_1(\pi,\iota)_{(1)}= \O$.   Hence, equality \eqref{eq:vincl} follows trivially.   
Moreover, the natural map $\Omega (K(1,1)) \to \Omega (K(\pi,1))$ 
is a $1$-equivalence; hence, $\pi$ has the same $1$-minimal model 
as the trivial group, cf. \cite{DGMS, Su}. It then follows from \cite[Thm.~A]{DP-ccm} 
that 
\begin{equation}
\label{eq:hom1}
\Hom(\pi,G)_{(1)} = \{1\}.
\end{equation}
Therefore, equality \eqref{eq:repincl} also holds trivially in this case.  
Thus, we may assume from now on that $b_1(M)>0$. 

In view of the discussion at the end of \S\ref{subsec:repcv}, 
we may replace in \eqref{eq:vincl} the group $\pi$ 
by the manifold $M$, and likewise $\pi_f$ by $M_f$.
Moreover, for $i=r=1$, the characteristic variety $\VV^1_1(M_f,\iota)$ 
may be replaced in \eqref{eq:vincl} by the representation variety $\Hom (\pi_f,G)$,  
when $f\in \mathcal{E}(M)$.
Indeed, for each $f\in \mathcal{E}(M)$, the manifold $M_f$ is a 
connected, $2$-dimensional CW-complex with $\chi(M_f)<0$. 
Thus, by the computation from Example \ref{ex:2cw}, 
we have that 
\begin{equation}
\label{eq:v1surf}
\VV^1_1(M_f,\iota)=\Hom(\pi_f,G).
\end{equation}

Finally, let $f\in E(M)$. By Lemma \ref{lem:cvnat}, the set $f_{\sharp}^{*} \Hom (\pi_f,G)$
is Zariski closed in $\Hom (\pi,G)$, and the set $f_{\sharp}^{*} \VV^1_1 (\pi_f,\iota)$ 
is Zariski closed in $\VV^1_1(\pi,\iota)$. Furthermore, the analytic germ 
$f_{\sharp}^{*} \Hom (\pi_f,G)_{(1)}$ is isomorphic to $\Hom (\pi_f,G)_{(1)}$, and similarly
$f_{\sharp}^{*} \VV^1_1 (\pi_f,\iota)_{(1)} \cong \VV^1_1 (\pi_f,\iota)_{(1)}$.

\begin{remark}
\label{rk=i0}
We also deduce from Lemma \ref{lem:cvnat} that equality near $1$ in \eqref{eq:repincl}
implies equality near $1$ in \eqref{eq:vincl}, for $i=0$ and all $r\ge 1$.
\end{remark}

\section{Quasi-projective manifolds and Orlik--Solomon models}
\label{sect:qp-os}

\subsection{Orlik--Solomon models}
\label{subsec:os}
We now restrict our attention to a class of quasi-K\"{a}hler 
manifolds of great importance in complex algebraic geometry.  
Recall that a space $M$ is said to be a quasi-projective variety if 
$M$ is a Zariski open subset of a projective variety.  
By resolution of singularities, a connected, smooth, complex 
quasi-projective variety $M$ can realized as $M= \oM \setminus D$, 
where $\oM$ is a connected, smooth, complex projective variety, 
and $D$ is a normal crossing divisor.   For short, we will say that 
$M$ is a {\em quasi-projective manifold}.

Let $\overline{M}$ and $\overline{M'}$ be two projective manifolds, and 
let $D\subset \overline{M}$ and $D'\subset \overline{M'}$ be two divisors.
A {\em regular morphism}\/ of pairs, $\bar{f}\colon (\overline{M},D) \to (\overline{M}',D')$, 
is a regular map $\bar{f}\colon \overline{M}\to \overline{M}'$ with the property that 
$\bar{f}^{-1}(D')\subseteq D$. Clearly, the restriction 
$f\colon \overline{M}\setminus D \to \overline{M}'\setminus D'$ is also a regular map. 
Conversely, any regular map between quasi-projective manifolds is induced by a
regular morphism between suitable compactifications with normal crossing divisors, 
see \cite{Mo}.  Consequently, a  map between two quasi-projective manifolds, 
$f\colon M\to M'$, is admissible (in the sense of \S\ref{subsec:admissible}) 
if and only if $M'$ is a smooth curve and $f$ is a regular surjection with 
connected generic fiber. 

We will consider a class of divisors broader than the normal crossing type, 
namely the {\em hypersurface arrangements}\/ investigated in \cite{Du}. 
Extending Morgan's Gysin models from \cite{Mo}, Dupont constructs in \cite{Du} 
a bigraded $\Q$-$\dga$, $\OS^{\hdot}_{\hdot}(\overline{M},D)$,  
associated to a  hypersurface arrangement $D$ in $\overline{M}$,
functorial with respect to regular morphisms of such pairs. He proves that 
$\OS^{\hdot}(\overline{M},D)$ is a finite model of the quasi-projective manifold 
$M=\overline{M}\setminus D$.  It is straightforward to extract from the results 
in \cite{Du} that $\OS^{\hdot}_{\hdot}(\overline{M},D)$ is a model with positive 
weights for $M$, in the sense from \cite{PS-15}. Moreover, there is an identification,
$H^{\hdot}(\OS (\overline{M},D))\equiv H^{\hdot}(\overline{M} \setminus D)$, 
natural with respect to regular morphisms of pairs. 

Given a quasi-projective manifold $M$, a compactification $\overline{M}$ obtained 
by adding a hypersurface arrangement $D$ is called a {\em convenient 
compactification}\/ if every element of $\cE (M)$ is represented by an admissible 
map $f\colon M\to M_f$ which is induced by a regular morphism of pairs,
$\bar{f}\colon (\overline{M},D) \to (\overline{M}_f,D_f)$, where $\overline{M}_f$ 
is the canonical compactification of the curve $M_f$, obtained by adding a finite 
set of points $D_f$. It is known that convenient compactifications always exist, 
see \cite{Mo}. 
Fixing such an object, we will use the following simplified notation: 
for each $f\in \cE(M)$, we denote the weight-preserving $\dga$ map 
$\OS (\bar{f})\colon \OS(\overline{M}_f,D_f) \to \OS(\overline{M},D)$ by 
$\Phi_f\colon A_f \to A$. 

\begin{remark}
\label{rem:nonnat}
Let $M$ be a  quasi-projective manifold with fundamental group $\pi$, and 
let $A=\OS(\overline{M},D)$ be an Orlik--Solomon model for 
$M$.  If $G$ is a linear algebraic group whose Lie algebra $\g$ 
is abelian, then, as shown in \cite[Thm.~B(2)]{DP-ccm}, there is an analytic 
isomorphism of germs, 
\begin{equation}
\label{eq:iso fag}
\xymatrix{\F(A,\g)_{(0)} \ar^(.45){\cong}[r]& \Hom(\pi,G)_{(1)}},
\end{equation}
which is {\em natural}\/ with respect to the action on flat connections 
of $\dga$ maps induced by regular morphisms of pairs, and the action 
on representation varieties of  induced homomorphisms 
on fundamental groups.  Furthermore, this isomorphism 
restricts to isomorphisms $\RR^i_r(A,\theta)_{(0)} \isom 
\VV^i_r(M,\iota)_{(1)}$, for all $i,r$. 

The naturality of the isomorphism \eqref{eq:iso  fag} for $G=\SL_2(\C)$ 
and $\Sol_2(\C)$ would simplify the proof of Theorem \ref{thm:main}. 
As explained in \cite[\S 7.5]{PS-16}, though, the argument from 
\cite[Thm.~B(2)]{DP-ccm} that establishes the naturality of the 
isomorphism \eqref{eq:iso  fag} breaks down in the non-abelian case.
This is the reason why we chose to prove Theorem \ref{thm:main} 
with the aid of Lemma \ref{lem:irrtrick}, instead.
\end{remark}

\subsection{Flat connections and infinitesimal jump loci}
\label{subsec:flat-qp}
We now proceed to describe infinitesimal analogs of the inclusions \eqref{eq:repincl} 
and \eqref{eq:vincl} for $i=r=1$. Let $\theta\colon \g\to \gl (V)$ be a finite-dimensional 
representation of a finite-dimensional Lie algebra. By naturality of the set of flat 
connections, we have an inclusion,
\begin{equation}
\label{eq:flatincl}
\F(A,\g) \supseteq \F^1(A,\g)\cup 
\bigcup_{f\in \mathcal{E}(M)}  \Phi_f^{*} \F(A_f,\g)\, ,
\end{equation}
where $\Phi_f^{*}$ denotes the map 
$\Phi_f \otimes \id_{\g} \colon A_f^1 \otimes \g \to A^1 \otimes \g$.
This inclusion is then the analog of \eqref{eq:repincl}.
For the analog of \eqref{eq:vincl}, we need some preparation. 

\begin{lemma}
\label{lem:p411}
For every map $f\in \cE(M)$, the following hold: 
\begin{enumerate}
\item \label{w1} $A^{\hdot}_f=A^{\le 2}_f$; 
\item \label{w2} $\chi (H^{\hdot}(A_f))<0$; 
\item \label{w3} $\Phi_f$ is injective.
\end{enumerate}
\end{lemma}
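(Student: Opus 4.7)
My plan is to address (w1), (w2), and (w3) in sequence, using general features of the Orlik--Solomon construction and the admissibility of $f$.

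For (w1), the convenient compactification makes $\overline{M}_f$ the canonical smooth projective compactification of the curve $M_f$, so $\dim_{\C} \overline{M}_f = 1$ and $D_f$ is a finite set of points. Since Dupont's OS model of a pair $(\overline{N}, D_N)$ with $\dim_{\C} \overline{N} = d$ is concentrated in cohomological degrees at most $2d$ (it is a finite model of the $2d$-real-dimensional manifold $\overline{N}\setminus D_N$), we immediately get $A_f^k = 0$ for $k \geq 3$. For (w2), $A_f$ being a finite model of $M_f$ gives $H^{\hdot}(A_f) \cong H^{\hdot}(M_f; \C)$, so $\chi(H^{\hdot}(A_f))=\chi(M_f)<0$ by the defining property of $\cE(M)$.

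Part (w3) is the main step. Admissibility and the convenient compactification ensure that $\bar{f}\colon (\overline{M}, D) \to (\overline{M}_f, D_f)$ is a surjective regular morphism of pairs, so I would verify injectivity of $\Phi_f=\OS(\bar{f})$ one bigraded piece at a time. In bidegree $(1,1)$, $\Phi_f$ acts as the cohomology pullback $\bar{f}^*\colon H^1(\overline{M}_f) \to H^1(\overline{M})$, which is injective because pullback along a surjective holomorphic map between connected compact K\"{a}hler manifolds is injective on cohomology. In bidegree $(1,2)$, each logarithmic generator $e_p$ for $p \in D_f$ maps to a non-trivial combination $\sum_j m_{p,j}\, e_{D_j}$ of logarithmic generators in $A^{1,2}$ indexed by the irreducible components $D_j$ of $\bar{f}^{-1}(p)\subseteq D$; since distinct points of $D_f$ have disjoint preimages, these images span linearly independent subspaces, giving injectivity in degree $1$.

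The main obstacle is upgrading this to degree $2$, since a $\dga$ map injective on generators need not be injective on products. I would exploit the bigrading and test $\Phi_f$ separately on $A_f^{2,2}$, $A_f^{2,3}$, and $A_f^{2,4}$. The piece $A_f^{2,2}$ is generated by $H^2(\overline{M}_f)$ together with cup products from $H^1(\overline{M}_f)$, and injectivity there again reduces to injectivity of $\bar{f}^*$ on $H^{\hdot}(\overline{M}_f)$. The pieces $A_f^{2,3} \cong H^1(\overline{M}_f)\otimes \spn\{e_p : p \in D_f\}$ and $A_f^{2,4}$, consisting of products $\alpha\otimes e_p$ and $e_p \wedge e_{p'}$, are then handled by combining the degree-$1$ injectivities with the disjointness of the preimages $\bar{f}^{-1}(p)$ for distinct $p \in D_f$, which yields linear independence of the images by a direct bookkeeping argument.
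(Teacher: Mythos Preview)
Your arguments for \eqref{w1} and \eqref{w2} match the paper's. For \eqref{w3} you take a genuinely different route: the paper composes with Dupont's iterated blow-up $\bar{b}\colon(\widetilde{M},\widetilde{D})\to(\overline{M},D)$ to reduce to the normal-crossing situation, observes that $\OS(\bar{f}\circ\bar{b})$ is then Morgan's Gysin map, and cites the injectivity of that map from \cite[Ex.~5.3]{DP-ccm}; injectivity of the composite $\OS(\bar{b})\circ\Phi_f$ forces injectivity of $\Phi_f$. Your approach is instead a direct bigraded check, which is more self-contained but requires knowing the OS model explicitly, whereas the paper's argument is a one-line reduction to a black box.

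Your degree-$1$ analysis is correct, but your picture of $A_f$ in degree~$2$ is off. Since $D_f$ consists of isolated points, the strata satisfy $H^1(\{p\})=0$ and $\{p\}\cap\{p'\}=\emptyset$ for $p\ne p'$; hence in the Gysin/OS model one has $A_f^{2,3}=A_f^{2,4}=0$ and $A_f^{2}=A_f^{2,2}=H^2(\overline{M}_f)\cong\C$. The products $\alpha\cdot e_p$ and $e_p\cdot e_{p'}$ you describe are already zero in $A_f$. So the only content in degree~$2$ is the injectivity of $\bar{f}^*\colon H^2(\overline{M}_f)\to H^2(\overline{M})$, which you do invoke. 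The ``bookkeeping'' you propose for the phantom pieces is unnecessary---and, had those pieces been as you describe, it would not obviously go through, since in the target the products $e_{D_j}\cdot e_{D_k}$ depend on whether $D_j\cap D_k\ne\emptyset$, which can occur even when $D_j$ and $D_k$ lie over distinct points of $D_f$. With this correction your direct argument is complete and valid.
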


\begin{proof}
The first claim follows easily from the construction of the Orlik--Solomon 
model $A_f$ of $M_f$, while the second claim simply translates the fact 
that $\chi(M_f)<0$. 

For the last assertion, we recall from \cite{Du} that there is a regular morphism 
of pairs, $\bar{b}\colon (\widetilde{M}, \widetilde{D}) \to (\overline{M},D)$, 
constructed by iterated blow-up, where $\widetilde{D}$ is a normal crossing 
divisor. We deduce that $\OS (\bar{f}\circ \bar{b})$ coincides with the map 
between Gysin models constructed by Morgan. This latter map is injective, 
as shown in \cite[Ex.~5.3]{DP-ccm}, and so we are done.
\end{proof}

Recall now that $b_1(M)>0$, and thus $H^1(A)\ne 0$.
The infinitesimal analog of \eqref{eq:vincl} is the following inclusion, 
\begin{equation}
\label{eq:rincl}
\RR^1_1(A,\theta) \supseteq  \Pi(A,\theta)\cup 
\bigcup_{f\in \mathcal{E}(M)}  \Phi_f^{*} \F(A_f,\g).
\end{equation}

Let us verify that \eqref{eq:rincl} holds.  
To start with, the inclusion $\Pi(A,\theta)\subseteq \RR^1_1(A,\theta)$ is 
given by \cite[Cor.~3.8]{MPPS}.  Next, for every $f\in \cE(M)$, 
we have that $\F(A_f,\g)=\RR^1_1(A_f,\theta)$, 
by \cite[Prop.~2.4]{MPPS} and Lemma \ref{lem:p411}.  Finally, 
$\Phi_f^{*} \RR^1_1(A_f,\theta) \subseteq \RR^1_1(A,\theta)$, 
by Lemmas \ref{lem:rnat} and \ref{lem:p411}.

\subsection{Properties of the infinitesimal inclusions}
\label{subsec:rk2}

Before proceeding, let us make a couple of  simple remarks about 
the inclusions in displays \eqref{eq:flatincl} and \eqref{eq:rincl}.  
All terms appearing on the right-hand side are Zariski closed 
subsets of $\F(A,\g)$, respectively $\RR^1_1(A,\theta)$. Indeed, 
for $\F^1(A,\g)$ and $\Pi(A,\theta)$, this follows by construction, 
cf.~\cite[\S 1.5]{MPPS}; 
moreover, these two varieties depend only on $H^1(A)$ and $\theta$. 
On the other hand, for $\Phi_f^{*} \F(A_f,\g)$ the claim follows from 
Lemmas \ref{lem:rnat}  and \ref{lem:p411}.  Furthermore, the analytic 
germs  $\Phi_f^{*} \F(A_f,\g)_{(0)}$ and $\F(A_f,\g)_{(0)}$ are isomorphic. 

\begin{lemma}
\label{lem:loctogl}
In both inclusions \eqref{eq:flatincl} and \eqref{eq:rincl}, 
equality is equivalent to equality near $0$.
\end{lemma}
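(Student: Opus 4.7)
The plan is to deduce this lemma from the positive-weight structure of the Orlik--Solomon model, via Lemmas \ref{lem:p1} and \ref{lem:p2}. Recall from \S\ref{subsec:os} that $A = \OS(\overline{M},D)$ is a finite model of $M$ with positive weights, in the sense that $A^1 = \bigoplus_{w\ge 1} A^1_w$, with both $d$ and the multiplication preserving the weight grading. Extending the induced $\C^{\times}$-action on $A^1$ by the trivial action on $\g$ gives a $\C^{\times}$-action with strictly positive weights on the ambient affine space $A^1 \otimes \g$.

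The crucial technical step is to verify that every subvariety appearing in \eqref{eq:flatincl} and \eqref{eq:rincl} is stable under this action, so that each has positive weights in the sense of \S\ref{subsec:loc global}. For $\F(A,\g)$, this is a weight-by-weight check of the Maurer--Cartan equation. For $\F^1(A,\g)$ and $\Pi(A,\theta)$, it follows at once because $Z^1(A) = H^1(A)$ is preserved by the weight action on $A^1$. Each $\Phi_f^{*}\F(A_f,\g)$ is stable because $\Phi_f$ is a weight-preserving $\dga$ morphism, again by \S\ref{subsec:os}. Finally, $\RR^1_1(A,\theta)$ is stable in view of the identity $d_{t\cdot\omega} = t \circ d_{\omega} \circ t^{-1}$ on $A \otimes V$ (with $t$ acting trivially on the $V$-factor), which exhibits the twisted complexes at $\omega$ and at $t\cdot\omega$ as isomorphic and therefore cohomologically indistinguishable.

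Once invariance is established, Lemma \ref{lem:p2} ensures that every irreducible component of each variety in \eqref{eq:flatincl} and \eqref{eq:rincl} passes through $0$. Assuming equality holds near $0$ in \eqref{eq:flatincl}, the right-hand side contains $\F(A,\g)$ in some neighborhood of $0$; invoking Lemma \ref{lem:p1} with $\X = \F(A,\g)$ and $\X'$ the right-hand side of \eqref{eq:flatincl} promotes this local inclusion to a global one, which, combined with the reverse inclusion already asserted in \eqref{eq:flatincl}, forces global equality. The analogous argument with $\X = \RR^1_1(A,\theta)$ handles \eqref{eq:rincl}. The converse implication, that global equality specializes to equality of germs, is tautological.

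The main obstacle I anticipate is the $\C^{\times}$-invariance check for $\RR^1_1(A,\theta)$, since it is the only place where the twisted differential $d_{\omega} = d\otimes \id + \ad_{\omega}$ interacts nontrivially with the weight action; the verification reduces, however, to the identity $\ad_{t\cdot\omega}(t\cdot x) = t\cdot \ad_{\omega}(x)$ for all $x \in A\otimes V$, which is immediate from the additivity of weights under multiplication in $A$.
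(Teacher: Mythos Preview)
Your proof is correct and follows essentially the same approach as the paper: use the positive-weight $\C^{\times}$-action on $A^1\otimes\g$ to show (via Lemma~\ref{lem:p2}) that all irreducible components of the left-hand sides pass through $0$, then invoke Lemma~\ref{lem:p1} to promote local equality to global equality. The paper outsources the invariance of $\F(A,\g)$ and $\RR^1_1(A,\theta)$ to \cite[\S9.17]{DP-ccm}, whereas you verify it directly; note also that strictly speaking only the invariance of the \emph{left-hand} sides $\F(A,\g)$ and $\RR^1_1(A,\theta)$ is needed here, though the extra checks you perform on $\F^1$, $\Pi$, and $\Phi_f^{*}\F(A_f,\g)$ are used elsewhere in the paper and do no harm.
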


\begin{proof}
The positive-weight decomposition of $A^1$ gives rise to a 
positive-weight $\C^{\times}$-action on $A^1\otimes \g$, leaving 
both $\F(A,\g)$ and $\RR^1_1(A,\theta)$ invariant, as explained in 
\cite[\S 9.17]{DP-ccm}. It follows from Lemmas \ref{lem:p1} and \ref{lem:p2} that 
equality near $0$ implies global equality.
\end{proof}

For later use, let us note that the above $\C^{\times}$-action on $A^1\otimes \g$ 
also endows with positive weights the subvarieties $\F^1(A,\g)$, $\Pi(A,\theta)$, and 
$\Phi_f^{*} \F(A_f,\g)$, for all $f\in \cE(M)$.

In the rank one case, i.e., when $\theta=\id_{\C}$, we have that 
$\F(A,\C)=H^1(A)$, for every connected $\dga$ $A$. Thus, inclusion \eqref{eq:rincl}
becomes 
\begin{equation}
\label{eq:r11incl}
\RR^1_1(A) \supseteq \{ 0\} \cup \bigcup_{f\in \mathcal{E}(M)} \im H^1(\Phi_f)
\end{equation}
in this case.  Actually, more can be said about this. Theorem C from \cite{DP-ccm}
implies the following infinitesimal analog of the bijection from Theorem \ref{thm:arapura}: 
\begin{equation}
\label{eq:infara}
\RR^1_1(A)= \{ 0\} \cup \bigcup_{f\in \mathcal{E}(M)} \im H^1(\Phi_f).  
\end{equation}
Moreover, this is the irreducible decomposition of $\RR^1_1(A)$, 
where  $\{ 0\}$ is omitted when $\cE(M) \neq  \O$, as in \cite[(50)]{MPPS}.

\begin{corollary}
\label{cor:1to2}
Equality in \eqref{eq:flatincl} implies equality in \eqref{eq:rincl}.
\end{corollary}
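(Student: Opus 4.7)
My plan is to prove the $\subseteq$ inclusion in \eqref{eq:rincl}, since the reverse inclusion is already established. Given $\omega \in \RR^1_1(A,\theta) \subseteq \F(A,\g)$, the assumed equality in \eqref{eq:flatincl} places $\omega$ in $\F^1(A,\g) \cup \bigcup_{f \in \cE(M)} \Phi_f^{*} \F(A_f,\g)$. If $\omega$ already lies in some $\Phi_f^{*} \F(A_f,\g)$ there is nothing to show, so the real task is to analyse elements $\omega = \eta \otimes g \in \F^1(A,\g) \cap \RR^1_1(A,\theta)$ and show that each such $\omega$ lies in $\Pi(A,\theta) \cup \bigcup_f \Phi_f^{*} \F(A_f,\g)$. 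I may assume $\eta \neq 0$, since $\omega = 0$ belongs automatically to $\Pi(A,\theta)$.

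The crux is to reduce the twisted-cohomology hypothesis on $\omega$ to the rank-one statement \eqref{eq:infara}. I would decompose $V = \bigoplus_\lambda V_\lambda$ into generalized eigenspaces of $T := \theta(g)$; each $V_\lambda$ is $T$-stable, so the complex $(A \otimes V, d_\omega)$ splits as the direct sum of the subcomplexes $(A \otimes V_\lambda, d_\omega|_{V_\lambda})$, forcing the existence of an eigenvalue $\lambda$ with $V_\lambda \neq 0$ and $H^1(A \otimes V_\lambda, d_\omega|_{V_\lambda}) \neq 0$. Writing $T|_{V_\lambda} = \lambda \cdot \id + N$ with $N$ nilpotent, the chain $\ker N \subseteq \ker N^2 \subseteq \cdots$ induces a finite filtration of the subcomplex whose successive quotients are isomorphic to $(A, d + \lambda\mu_\eta) \otimes (\ker N^{j+1}/\ker N^j)$. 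Climbing up the corresponding long exact sequences in cohomology, I would conclude that $H^1(A, d + \lambda\mu_\eta) \neq 0$, i.e., $\lambda \eta \in \RR^1_1(A)$.

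The conclusion then splits into two cases. If $\lambda = 0$, then $V_0 \neq 0$ forces $\det \theta(g) = 0$, whence $\omega \in \Pi(A,\theta)$. If $\lambda \neq 0$, then $\lambda \eta$ is a nonzero class in $\RR^1_1(A)$, so by \eqref{eq:infara} it lies in $\im H^1(\Phi_f)$ for some $f \in \cE(M)$; by linearity $\eta$ lies there too, and the injectivity of $\Phi_f$ provided by Lemma \ref{lem:p411} allows me to lift $\eta$ to a closed $\eta' \in A_f^1$ with $\Phi_f(\eta') = \eta$, exhibiting $\omega = \Phi_f^{*}(\eta' \otimes g) \in \Phi_f^{*} \F(A_f,\g)$. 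The step I expect to be the main obstacle is the filtration argument of the middle paragraph: one must track total degrees carefully so that a nontrivial degree-one class in the full complex really does force $H^1$ (rather than $H^0$ or $H^2$) of the rank-one twist to be nonzero, and one needs to handle the non-diagonalisable part of $\theta(g)$ without losing degree information along the way.
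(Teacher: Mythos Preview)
Your argument is correct and complete in outline. The filtration step you flagged as the main obstacle does go through: the filtration $F_j = A\otimes\ker N^j$ is a filtration by subcomplexes (since $\mu_\eta\otimes N$ lowers filtration degree and $(d+\lambda\mu_\eta)\otimes\id$ preserves it), and the induced differential on each graded piece is exactly $(d+\lambda\mu_\eta)\otimes\id$ because $N$ vanishes there. A straight induction on $j$ using the long exact sequence of $0\to F_j\to F_{j+1}\to F_{j+1}/F_j\to 0$ shows that if $H^1(A,d+\lambda\mu_\eta)=0$ then $H^1(F_j)=0$ for all $j$; no degree shift occurs because the filtration is by subcomplexes, not by a grading that mixes with the cohomological degree. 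One cosmetic point: the injectivity of $\Phi_f$ from Lemma~\ref{lem:p411} is not actually needed for the lift in your last paragraph---connectedness of $A$ already gives $\Phi_f(\eta')=\eta$ on the nose once $[\eta]\in\im H^1(\Phi_f)$, and $\eta'\otimes g\in\F^1(A_f,\g)$ automatically.

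By contrast, the paper does not carry out this argument directly: it simply invokes \cite[Prop.~4.1]{BMPP}, after checking via Lemma~\ref{lem:p411} and \eqref{eq:infara} that the hypotheses of that proposition are met for the family $\{\Phi_f\}_{f\in\cE(M)}$. What you have written is essentially a self-contained unpacking of the content of that external proposition in the present setting---the generalized-eigenspace reduction to rank one followed by the appeal to \eqref{eq:infara} is precisely the mechanism underlying that result. Your route has the advantage of being transparent and not relying on an outside black box; the paper's route is shorter on the page but defers the real work to \cite{BMPP}.
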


\begin{proof}
By Lemma \ref{lem:p411} and formula \eqref{eq:infara}, 
we may apply \cite[Prop.~4.1]{BMPP} to the family 
$\{ \Phi_f \}_{f\in \cE(M)}$ to obtain the desired conclusion.
\end{proof}

\section{Irreducibility, dimension, redundancies}
\label{sect:idr}

In this section, $M$ will be a quasi-projective manifold with 
fundamental group $\pi=\pi_1(M)$, and $A=\OS^{\hdot}(\overline{M},D)$ 
will be an OS-model for $M$ associated to a fixed convenient compactification 
of $M$.  Let $G$ be a complex linear algebraic group, 
let $\iota\colon G\to \GL(V)$ be a rational  representation, and let 
$\theta \colon \g \to \gl(V)$ be its tangential representation.  Unless 
otherwise mentioned, we suppose in this section that $G$ is 
either $\SL_2$ or its standard Borel subgroup $\Sol_2$,
consisting of upper-triangular matrices with determinant $1$. 
To avoid trivialities, we will assume throughout that $b_1(M)>0$. 

\subsection{Dimension and irreducibility}
\label{subsec:dimirr}  

Our strategy is to compare the union of germs 
at $1$ from \eqref{eq:repincl-intro} with the union of germs at $0$ from 
\eqref{eq:flatincl-intro}, and similarly for \eqref{eq:vincl-intro} and
\eqref{eq:rincl-intro}, using Theorem \ref{thm:b} and Lemma \ref{lem:irrtrick}.
We start this approach by verifying the dimension and irreducibility 
assumptions from that lemma.  

\begin{lemma}
\label{lem:flatirr}
Let $f\colon M\to M_f$ be an admissible map, and let $\Phi_f \colon 
A_f\to A$ be the corresponding morphism of $\dga$ models. 
For any complex linear algebraic group $G$,  the germs 
$f_{\sharp}^{*} \Hom (\pi_f,G)_{(1)}$ and $\Phi_f^{*} \F(A_f,\g)_{(0)}$ 
are isomorphic to $\F(H^{\hdot}(M_f), \g)_{(0)}$.  Moreover, if 
$G=\SL_2$ or $\Sol_2$, then
\begin{enumerate}
\item  \label{gg2}
These  germs are irreducible; 
\item \label{gg3}
$\F(A_f,\g)_{(0)}\ne \F^1(A_f,\g)_{(0)}$.
\end{enumerate}
\end{lemma}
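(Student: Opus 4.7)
The plan is to combine the topological-to-infinitesimal dictionary of Theorem~\ref{thm:b} with the fact that smooth complex curves are aspherical and formal. Since $\chi(M_f)<0$, the curve $M_f$ is either a compact Riemann surface of genus $\ge 2$ or an open smooth curve with $b_1(M_f)\ge 2$. In either case $M_f$ is a $K(\pi_f,1)$ and is formal: the compact case is handled by \cite{DGMS}, while open smooth curves are homotopy equivalent to wedges of circles. Hence $H^{\hdot}(M_f)$ with $d=0$ is another finite $\dga$ model of $M_f$, and Theorem~\ref{thm:b} applied to each of the two models yields germ isomorphisms
\[
\F(A_f,\g)_{(0)} \,\cong\, \Hom(\pi_f,G)_{(1)} \,\cong\, \F(H^{\hdot}(M_f),\g)_{(0)}.
\]
To incorporate the remaining two germs in part~(1), I would apply Lemma~\ref{lem:cvnat} (using surjectivity of $f_{\sharp}$, which holds because $f$ is admissible) and Lemma~\ref{lem:rnat} (using injectivity of $\Phi_f$ from Lemma~\ref{lem:p411}\eqref{w3}).

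For part~\eqref{gg2} I would work throughout with the cleaner model $H^{\hdot}(M_f)$. The scaling $\C^{\times}$-action on $H^1(M_f)\otimes\g$ endows $\F(H^{\hdot}(M_f),\g)$ with positive weights, so by Lemma~\ref{lem:p2} it suffices to check global irreducibility. When $M_f$ is open, $H^2(M_f)=0$ makes the Maurer--Cartan equation vacuous, giving $\F(H^{\hdot}(M_f),\g)=H^1(M_f)\otimes\g$, which is an affine space. When $M_f$ is compact of genus $g\ge 2$, choosing a symplectic basis of $H^1(M_f)$ reduces the Maurer--Cartan equation to the single relation $\sum_{i=1}^{g}[x_i,y_i]=0$ in $\g$, identifying $\F(H^{\hdot}(M_f),\g)$ with the zero-fiber of a moment map on $\g^{2g}$; for $\g=\sl_2$ or $\sol_2$ this fiber is irreducible by standard moment-map/commuting-variety arguments.

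For part~\eqref{gg3} I would compare dimensions. The subvariety $\F^1(A_f,\g)$ is either $\{0\}$ or the cone on $\PP(H^1(A_f))\times\PP(\g)$, and hence has dimension at most $b_1(M_f)+\dim\g-1$, whereas $\F(A_f,\g)_{(0)}$ has dimension $b_1(M_f)\cdot\dim\g$ in the open case and $2g\cdot\dim\g-\dim[\g,\g]$ in the compact case from the descriptions above. A short numerical check in each of the four combinations of $G\in\{\SL_2,\Sol_2\}$ and compact/open $M_f$, using $b_1(M_f)\ge 2$ and $\dim\g\ge 2$, yields a strict inequality and therefore the required inequality of germs.

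The main obstacle will be the irreducibility of the moment-map fiber in the compact genus-$g$ case for $G=\Sol_2$: for $\SL_2$ this is classical, but the solvable case $\sol_2=\C\ltimes\C$ requires a direct argument, for instance by exploiting the semidirect-product structure to stratify the fiber and verify irreducibility of each stratum.
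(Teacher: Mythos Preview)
Your argument for the first claim (the chain of germ isomorphisms) is essentially the paper's: formality of $M_f$, two applications of Theorem~\ref{thm:b}, and the closed-embedding Lemmas~\ref{lem:cvnat} and~\ref{lem:rnat} together with Lemma~\ref{lem:p411}\eqref{w3}.

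For parts~\eqref{gg2} and~\eqref{gg3} the paper takes a shortcut you do not: it invokes \cite[Lem.~7.3]{MPPS}, which asserts directly that for a smooth curve $C$ with $\chi(C)<0$ and $\g\in\{\sl_2,\sol_2\}$ the affine variety $\F(H^{\hdot}(C),\g)$ is irreducible and \emph{strictly} contains $\F^1(H^{\hdot}(C),\g)$. Local irreducibility then follows from Lemma~\ref{lem:p2}, and the inequality of germs in~\eqref{gg3} is deduced via the Hopfian-type Lemma~\ref{lem:p3}: assuming $\F(A_f,\g)_{(0)}=\F^1(A_f,\g)_{(0)}$ would force $\F(H^{\hdot}(M_f),\g)_{(0)}\cong\F^1(H^{\hdot}(M_f),\g)_{(0)}$, hence equality, contradicting the strict containment from \cite{MPPS}.

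Your route is more self-contained. For~\eqref{gg3} your dimension count is a valid and arguably cleaner alternative to the paper's Hopfian argument; note that you only need the trivial lower bound $\dim\mu^{-1}(0)\ge (2g-1)\dim\g$ for the map $\mu\colon\g^{2g}\to\g$, and this already exceeds $\dim\F^1=2g+\dim\g-1$ once $g\ge 2$ and $\dim\g\ge 2$, so you need not justify the sharper formula $2g\dim\g-\dim[\g,\g]$. For~\eqref{gg2} you have correctly isolated the only nontrivial case (compact, $\g=\sol_2$): this is precisely the content of \cite[Lem.~7.3]{MPPS}, so you are in effect reproving that lemma. Your proposed stratification via $\sol_2\cong\C\ltimes\C$ does succeed, but it is a genuine computation rather than a formality; the paper simply outsources it.
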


\begin{proof}

As noted before,  there are isomorphisms of analytic germs, 
\begin{equation}
\label{eq:sharpisos}
f_{\sharp}^{*} \Hom (\pi_f,G)_{(1)} \cong  \Hom (\pi_f,G)_{(1)} \quad\text{and}\quad
\Phi_f^{*} \F(A_f,\g)_{(0)}\cong \F(A_f,\g)_{(0)}.
\end{equation}
On the other hand, by Theorem \ref{thm:b}, we also 
have an isomorphism 
\begin{equation}
\label{eq:moreiso}
\F(A_f,\g)_{(0)} \cong \Hom (\pi_f,G)_{(1)}.
\end{equation} 

Observe that the curve $M_f$ is a formal space in the sense of Sullivan \cite{Su}.
Hence, the $\dga$ $H^{\hdot}(M_f)$, endowed with zero differential, 
is another finite model of $M_f$.  Again by Theorem \ref{thm:b}, we 
have an isomorphism $\F(A_f,\g)_{(0)} \cong \F(H^{\hdot}(M_f),\g)_{(0)}$,
and this proves the first claim.

For Parts \eqref{gg2} and \eqref{gg3}, we will use Lemma 7.3 from \cite{MPPS}, 
which says that, for a smooth complex curve $C$ with $\chi(C)<0$, the variety 
$\F(H^{\hdot}(C),\g)$ is irreducible and strictly contains $\F^1(H^{\hdot}(C),\g)$.

Next, we prove \eqref{gg2}. By the aforementioned result, the variety 
$\F(H^{\hdot}(M_f),\g)$ is irreducible. 
Since this variety is homogenous, and thus has positive weights, 
Lemma \ref{lem:p2} implies that the germ $\F(H^{\hdot}(M_f),\g)_{(0)}$ is 
also irreducible. 

Finally, we prove \eqref{gg3}.  
Since $H^1(A_f)\cong H^1(M_f)$, there is an isomorphism of germs, 
$\F^1(A_f,\g)_{(0)}\cong \F^1(H^{\hdot}(M_f),\g)_{(0)}$.
Suppose that $\F(A_f,\g)_{(0)}= \F^1(A_f,\g)_{(0)}$.  Then the germ at $0$ of 
the variety $\F(H^{\hdot}(M_f),\g)$ would be isomorphic to the germ at $0$ 
of the closed subvariety $\F^1(H^{\hdot}(M_f),\g)$.  Hence, Lemma \ref{lem:p3} 
would imply that $\F(H^{\hdot}(M_f),\g)=\F^1(H^{\hdot}(M_f),\g)$, in 
contradiction with \cite[Lem.~7.3]{MPPS}.  The proof is thus complete.
\end{proof}

The next lemma completes the verification of the dimension and 
irreducibility assumptions from Lemma \ref{lem:irrtrick}.

\begin{lemma}
\label{lem:moreirr}
With notation as above, the following hold for $G=\SL_2$ or $\Sol_2$.
\begin{enumerate}
\item \label{gr1}
The germ $\abf^{*} \Hom (\pi_{\abf},G)_{(1)}$ is isomorphic to 
$\F^1(A,\g)_{(0)}$.
\item \label{gr2}
The germ  $\abf^{*} \VV^1_1 (\pi_{\abf},\iota)_{(1)}$ is isomorphic 
to $ \Pi(A,\theta)_{(0)}$.
\item \label{gr3}
All the above germs are irreducible.
\end{enumerate}
\end{lemma}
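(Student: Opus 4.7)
The plan is to identify all three germs appearing on the left by passing through a finite dga model for $K(\pi_{\abf},1)\simeq T^n$, where $n=b_1(M)>0$. By Lemma \ref{lem:abf}, the formal dga $A_0=(\bigwedge^{\hdot}H^1(\pi_{\abf}),d=0)$ is a finite model of $T^n$, and satisfies $\F(A_0,\g)=\F^1(A_0,\g)$ together with $\RR^1_1(A_0,\theta)=\Pi(A_0,\theta)$. Applying Theorem \ref{thm:b} to $T^n$ with this model yields germ isomorphisms
\[
\Hom(\pi_{\abf},G)_{(1)}\cong \F^1(A_0,\g)_{(0)}, \qquad \VV^1_1(\pi_{\abf},\iota)_{(1)}\cong \Pi(A_0,\theta)_{(0)}.
\]

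To remove the prefix $\abf^{*}$ on the left-hand sides of \eqref{gr1} and \eqref{gr2}, I would invoke Lemma \ref{lem:cvnat}: since $\abf\colon\pi\surj\pi_{\abf}$ is surjective, the induced map $\abf^{*}$ is a closed embedding on representation varieties and restricts to a closed embedding on the degree $1$ jump loci. Passage to germs at $1$ then gives canonical isomorphisms $\abf^{*}\Hom(\pi_{\abf},G)_{(1)}\cong \Hom(\pi_{\abf},G)_{(1)}$ and $\abf^{*}\VV^1_1(\pi_{\abf},\iota)_{(1)}\cong \VV^1_1(\pi_{\abf},\iota)_{(1)}$. To pass from $A_0$ to the Orlik--Solomon model $A$, I would use the observation recorded at the end of \S\ref{subsec:flat} that the varieties $\F^1$ and $\Pi$ depend only on $H^1$: since both $A$ and $A_0$ are connected, $\F^1(A,\g)=H^1(A)\otimes\g$ and $\F^1(A_0,\g)=H^1(A_0)\otimes\g$, and likewise $\Pi$ is described purely in terms of $H^1$ and $\theta$. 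Combined with the canonical identification $H^1(A_0)=H^1(M)=H^1(A)$, this produces $\F^1(A_0,\g)\cong \F^1(A,\g)$ and $\Pi(A_0,\theta)\cong \Pi(A,\theta)$, completing parts \eqref{gr1} and \eqref{gr2}.

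For \eqref{gr3}, both $\F^1(A,\g)$ and $\Pi(A,\theta)$ carry canonical positive-weight $\C^{\times}$-actions as affine cones, so by Lemma \ref{lem:p2} it suffices to verify their global irreducibility. The variety $\F^1(A,\g)$ is the cone (under Segre) over $\PP(H^1(A))\times\PP(\g)$, which is irreducible as soon as $b_1(M)>0$. Similarly, $\Pi(A,\theta)$ is the cone over $\PP(H^1(A))\times\PP(\Zero(\det\circ\theta))$, so it remains to show that the zero locus $\Zero(\det\circ\theta)\subseteq\g$ is itself irreducible.

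This last check is the main obstacle, and it is precisely where the hypothesis $G=\SL_2(\C)$ or $\Sol_2(\C)$ becomes essential. For $\g=\sl_2$, the polynomial $\det\circ\theta$ is $\Ad$-invariant (because $\theta=d_1\iota$), and the ring of $\Ad$-invariant polynomials on $\sl_2$ is $\C[c]$, with $c$ the Casimir. A weight count on the $\SL_2$-isotypic decomposition of $V$ shows that $\det\circ\theta$ is either identically zero (as soon as some irreducible summand of $V$ is odd-dimensional, so that $0$ appears among its weights) or a positive power of $c$; in either case $\Zero(\det\circ\theta)$ equals $\g$ or the nilpotent cone, both irreducible. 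For $\g=\sol_2$, the Lie--Kolchin theorem forces $\iota$ to be upper-triangularizable, so $\det\iota$ is a single character of $\Sol_2$ and $\det\circ\theta$ becomes an affine-linear form on $\g$, whose zero locus is either $\g$ or a hyperplane, again irreducible in either case. Once the global irreducibility of $\F^1(A,\g)$ and $\Pi(A,\theta)$ is in hand, Lemma \ref{lem:p2} transports it to the corresponding germs at $0$, finishing \eqref{gr3}.
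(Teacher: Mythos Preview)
Your proof of parts \eqref{gr1} and \eqref{gr2}, and the reduction of \eqref{gr3} to the irreducibility of $\Zero(\det\circ\theta)$, follow exactly the paper's route. Your $\sl_2$ argument (Ad-invariance plus a weight count) is a correct, self-contained version of what the paper outsources to \cite[Lem.~3.9]{MPPS}.

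The $\sol_2$ step, however, contains a genuine error. You write that ``$\det\iota$ is a single character of $\Sol_2$ and $\det\circ\theta$ becomes an affine-linear form on $\g$''. This conflates two different objects: the tangent at $1$ of the character $\det\circ\iota\colon G\to\C^{\times}$ is $\mathrm{tr}\circ\theta$, which is indeed linear, but that is not what you need. The function $\det\circ\theta\colon\g\to\C$ is the determinant of the matrix $\theta(g)$, a homogeneous polynomial of degree $m=\dim V$, and for $m>1$ it is certainly not linear. The fact that $\det\iota$ is a character says nothing directly about $\det\circ\theta$.

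The paper's fix is to argue at the Lie algebra level from the start: by Lie's theorem $\theta$ lands in upper-triangular matrices, so $\det\circ\theta=\prod_{i=1}^m\lambda_i$ with each $\lambda_i\in\g^*$ a diagonal entry. The map $g\mapsto(\lambda_1(g),\dots,\lambda_m(g))$ is a Lie homomorphism to the abelian Lie algebra $\C^m$, hence factors through the $1$-dimensional abelianization of $\sol_2$; consequently every $\lambda_i$ is a scalar multiple of one fixed linear form $\lambda$, and $\det\circ\theta=c\,\lambda^m$. Your conclusion that the zero locus is $\g$ or a hyperplane is correct, but it needs this argument rather than the one you gave.
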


\begin{proof}
As mentioned previously, we have an isomorphism 
\begin{equation}
\label{eq:abfhom}
(\abf^{*} \Hom (\pi_{\abf},G), \abf^{*} \VV^1_1 (\pi_{\abf},\iota))_{(1)} \cong 
(\Hom (\pi_{\abf},G), \VV^1_1 (\pi_{\abf},\iota))_{(1)}.
\end{equation}
Denote by $A_0$ the $\dga$ $(\bigwedge^{\hdot}H^1(M),d=0)$.  
We deduce from Theorem \ref{thm:b}
and Lemma \ref{lem:abf} that
\begin{equation}
\label{eq:homcong}
(\Hom (\pi_{\abf},G), \VV^1_1 (\pi_{\abf},\iota))_{(1)} \cong 
(\F(A_0,\g),\RR^1_1(A_0,\theta))_{(0)}.
\end{equation}

Again by Lemma \ref{lem:abf}, 
\begin{equation}
\label{eq:facong}
(\F(A_0,\g),\RR^1_1(A_0,\theta)) = (\F^1(A_0,\g),\Pi(A_0,\theta)).
\end{equation}
Since plainly $H^1(A_0)\cong H^1(A)$, we have that 
\begin{equation}
\label{eq:f1cong}
(\F^1(A_0,\g),\Pi(A_0,\theta))\cong (\F^1(A,\g),\Pi(A,\theta)).
\end{equation}
Putting things together verifies claims \eqref{gr1} and \eqref{gr2}.

We now prove claim \eqref{gr3}.
The irreducibility of $\F^1(A,\g)_{(0)}$ follows from \cite[Lem.~3.3]{MPPS} 
and Lemma \ref{lem:p2}. By the construction of $\Pi(A,\theta)$ 
(see \cite[(18)]{MPPS}), the irreducibility of the zero set 
$\Zero (\det \circ \theta)$ implies the irreducibility of $\Pi(A,\theta)_{(0)}$. 
When $\g=\sl_2$, the variety $\Zero (\det \circ \theta)$ is irreducible, 
by \cite[Lem.~3.9]{MPPS}. 

Finally, we let $\g$ be the $2$-dimensional solvable Lie algebra $\sol_2$, 
with $1$-di\-mensional abelianization.  Set $m=\dim V$. Since 
$\sol_2$ is solvable, $\theta$ takes values in the upper triangular
Lie subalgebra of $\gl_m(\C)$, by a classical result in Lie theory 
\cite{Hu72}. Composing $\theta$ with the projection onto the 
diagonal matrices, we obtain a Lie algebra map 
$\theta' \colon \sol_2 \to \C^m$, with components $\lambda_1,\dots,\lambda_m$, 
having the property that $\det \circ \theta=\prod_i \lambda_i$. Since $\theta'$ factors 
through the abelianization, we infer that $\det \circ \theta= c\cdot \lambda^m$, 
for some constant $c$ and some linear map $\lambda$ on $\sol_2$, 
which clearly implies the irreducibility of $\Zero (\det \circ \theta)$. 
This completes our proof.
\end{proof}

\subsection{Non-redundant cases}
\label{subsec:nonred}

Next, we have to analyze the unions of germs at the origin from 
\eqref{eq:repincl-intro}--\eqref{eq:rincl-intro} from the viewpoint 
of their redundancies. 

\begin{lemma}
\label{lem:nonred1}
For any two distinct maps $f,g \in \cE(M)$, the following hold 
for $G=\SL_2$ or $\Sol_2$:
\begin{enumerate}
\item  \label{nr1}
$f_{\sharp}^{*} \Hom (\pi_f,G)_{(1)} \not\subseteq g_{\sharp}^{*} 
\Hom (\pi_g,G)_{(1)}$;
\item \label{nr2}
$\Phi_f^{*} \F(A_f,\g)_{(0)}  \not\subseteq \Phi_g^{*} \F(A_g,\g)_{(0)}$.
\end{enumerate}
\end{lemma}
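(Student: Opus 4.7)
The plan is to reduce both parts to the rank-one case, where the non-redundancy of the subobjects indexed by distinct elements of $\cE(M)$ is already known — infinitesimally via the irreducible decomposition \eqref{eq:infara} of $\RR^1_1(A)$, and topologically via Arapura's Theorem \ref{thm:arapura}. In each setting the reduction is effected by intersecting with a one-dimensional abelian subobject: a one-dimensional abelian subalgebra $\mathfrak{a} \cong \C$ of $\g$ on the infinitesimal side, and the diagonal torus $\C^{\times} \hookrightarrow G$ on the topological side. Both $\sl_2$ and $\sol_2$ admit such a subalgebra, and both $\SL_2$ and $\Sol_2$ admit such a diagonal subgroup.

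For part \eqref{nr2}, fix $\mathfrak{a} \cong \C$ inside $\g$, giving an inclusion $A^1 \otimes \mathfrak{a} \hookrightarrow A^1 \otimes \g$. Since $[\mathfrak{a},\mathfrak{a}]=0$, for $\omega \in A^1 \otimes \mathfrak{a}$ the bracket term $[\omega,\omega]$ vanishes, so the Maurer--Cartan equation reduces to $d\omega=0$; as $A^0=\C$, this gives $\F(A,\g)\cap (A^1\otimes \mathfrak{a}) = H^1(A)\otimes \mathfrak{a}$, and (using the injectivity of $\Phi_f$ from Lemma \ref{lem:p411}) $\Phi_f^{*}\F(A_f,\g)\cap(A^1\otimes \mathfrak{a}) = \im H^1(\Phi_f)\otimes \mathfrak{a}$. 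Assuming for contradiction that $\Phi_f^{*}\F(A_f,\g)_{(0)} \subseteq \Phi_g^{*}\F(A_g,\g)_{(0)}$, intersecting both germs with the linear germ $(A^1\otimes \mathfrak{a})_{(0)}$ yields $(\im H^1(\Phi_f))_{(0)} \subseteq (\im H^1(\Phi_g))_{(0)}$. Both sides are germs of linear subspaces, hence the inclusion propagates globally to $\im H^1(\Phi_f) \subseteq \im H^1(\Phi_g)$. By \eqref{eq:infara}, for $f\ne g$ in $\cE(M)$ these are two distinct positive-dimensional irreducible components of $\RR^1_1(A)$, which are therefore incomparable; contradiction.

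For part \eqref{nr1}, the argument is parallel, with the diagonal $\C^{\times}\subseteq G$ playing the role of $\mathfrak{a}$. Since $f_{\sharp}$ is surjective, $f_{\sharp}^{*}\Hom(\pi_f,G) \cap \Hom(\pi,\C^{\times}) = f_{\sharp}^{*}\Hom(\pi_f,\C^{\times})$, and a hypothetical germ inclusion $f_{\sharp}^{*}\Hom(\pi_f,G)_{(1)} \subseteq g_{\sharp}^{*}\Hom(\pi_g,G)_{(1)}$ restricts to $f_{\sharp}^{*}\Hom(\pi_f,\C^{\times})_{(1)}\subseteq g_{\sharp}^{*}\Hom(\pi_g,\C^{\times})_{(1)}$. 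Both sides are germs of subtori of $\Hom(\pi,\C^{\times})$ (closed connected algebraic subgroups), and a germ inclusion at the identity between subtori lifts to a global containment of subtori; so the assumption forces $f_{\sharp}^{*}\Hom(\pi_f,\C^{\times}) \subseteq g_{\sharp}^{*}\Hom(\pi_g,\C^{\times})$. Theorem \ref{thm:arapura} identifies these subtori with two distinct positive-dimensional irreducible components of $\VV^1_1(M)$ passing through $1$, ruling out any such containment.

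The main technical point to watch in both parts is the passage from germ-inclusion to an inclusion of the ambient global geometric objects — linear subspaces in \eqref{nr2}, subtori in \eqref{nr1}. In both settings this works because the objects are homogeneous for a natural group action (scaling by $\C^{\times}$ for linear subspaces; the group structure for subtori), so a neighborhood of the base point controls the whole object; once this is granted, the rest of the argument is a bookkeeping exercise assembling Arapura's theorem, its infinitesimal counterpart \eqref{eq:infara}, and the abelian Maurer--Cartan reduction.
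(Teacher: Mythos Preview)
Your proof is correct and follows essentially the same approach as the paper: reduce to rank one by intersecting with the diagonal torus $\C^{\times}\subseteq G$ (resp.\ a one-dimensional abelian subalgebra of $\g$), pass from the germ inclusion to a global inclusion using the homogeneity of subtori (resp.\ linear subspaces), and derive a contradiction from Arapura's bijection (resp.\ the irreducible decomposition~\eqref{eq:infara}). The only cosmetic differences are that the paper invokes Lemma~\ref{lem:p1} explicitly for the local-to-global step and treats part~\eqref{nr1} before part~\eqref{nr2}.
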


\begin{proof}
We assume first that $f_{\sharp}^{*} \Hom (\pi_f,G)_{(1)} 
\subseteq g_{\sharp}^{*} \Hom (\pi_g,G)_{(1)}$.
Intersecting this inclusion with $\Hom (\pi ,\C^{\times})$ 
where $\C^{\times}$ is the subtorus of diagonal matrices 
from $G$, we infer that 
$f_{\sharp}^{*} \Hom (\pi_f,\C^{\times})_{(1)} 
\subseteq g_{\sharp}^{*} \Hom (\pi_g,\C^{\times})_{(1)}$.
Since both $f_{\sharp}^{*} \Hom (\pi_f,\C^{\times})$ and 
$g_{\sharp}^{*} \Hom (\pi_g,\C^{\times})$
are connected tori, it follows from Lemma \ref{lem:p1} that 
$f_{\sharp}^{*} \Hom (\pi_f,\C^{\times}) \subseteq g_{\sharp}^{*} 
\Hom (\pi_g,\C^{\times})$, in 
contradiction with the bijection from Theorem \ref{thm:arapura}. 

Finally, suppose that $\Phi_f^{*} \F(A_f,\g)_{(0)}  \subseteq 
\Phi_g^{*} \F(A_g,\g)_{(0)}$. Let $\C \subseteq \g$
be the Lie algebra of the torus considered above. 
Intersecting this inclusion with $\F(A, \C)$, we infer as before that
$\Phi_f^{*} \F(A_f,\C)_{(0)} \subseteq \Phi_g^{*} \F(A_g,\C)_{(0)}$, 
which implies that $\im H^1(\Phi_f) \subseteq \im H^1(\Phi_g)$, 
in contradiction with \eqref{eq:infara}. 
\end{proof}

\begin{lemma}
\label{lem:nonred2}
For all $f \in \cE(M)$ and for all complex linear algebraic groups $G$, 
the following equality of germs holds:
\begin{equation}
\label{eq:cap}
f_{\sharp}^{*} \Hom (\pi_f,G)_{(1)} \cap \abf^{*} \Hom (\pi_{\abf},G)_{(1)}=
f_{\sharp}^{*} \abf^{*} \Hom ((\pi_f)_{\abf},G)_{(1)}.
\end{equation}
Furthermore, if $G=\SL_2$ or $\Sol_2$, then 
\begin{enumerate}
\item \label{qq1}
$\Phi_f^{*} \F(A_f,\g)_{(0)}  \not\subseteq \F^1(A,\g)_{(0)}$; 
\item \label{qq2}
$f_{\sharp}^{*} \Hom (\pi_f,G)_{(1)} \not\subseteq \abf^{*} 
\Hom (\pi_{\abf},G)_{(1)}$.
\end{enumerate}
\end{lemma}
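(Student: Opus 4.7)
My plan is to prove the equality first at the level of the underlying affine subvarieties of $\Hom(\pi,G)$, and then derive both \eqref{qq1} and \eqref{qq2} by contradiction, using Lemma \ref{lem:flatirr} as the source of the strict inclusion $\F^1(A_f,\g)\subsetneq\F(A_f,\g)$. The equality is essentially formal: a representation $\rho\colon\pi\to G$ lies in $f_{\sharp}^{*}\Hom(\pi_f,G)$ iff $\rho=\sigma\circ f_{\sharp}$ for some $\sigma\colon\pi_f\to G$, and it lies in $\abf^{*}\Hom(\pi_{\abf},G)$ iff $\rho(\pi)$ is abelian. Since $f_{\sharp}$ is surjective, $\rho(\pi)=\sigma(\pi_f)$, so both conditions together amount to $\sigma$ having abelian image, equivalently $\sigma$ factoring through the abelianization $\abf_f\colon\pi_f\surj(\pi_f)_{\abf}$. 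This gives the set-theoretic equality of affine subvarieties of $\Hom(\pi,G)$, and hence of their germs at $1$.

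For \eqref{qq1}, I assume $\Phi_f^{*}\F(A_f,\g)_{(0)}\subseteq\F^1(A,\g)_{(0)}$. Both sides are stable under the positive-weight $\C^{\times}$-action on $A^1\otimes\g$ recalled after Lemma \ref{lem:loctogl}, so by Lemmas \ref{lem:p1} and \ref{lem:p2} the germ inclusion promotes to the global inclusion $\Phi_f^{*}\F(A_f,\g)\subseteq\F^1(A,\g)$. Given $\omega\in\F(A_f,\g)$, write $\Phi_f^{*}\omega=\eta\otimes g$ with $d\eta=0$. By Lemma \ref{lem:p411}\eqref{w3}, $\Phi_f$ is injective; hence $\Phi_f\otimes\id_{\g}$ is injective and tensor rank is preserved, so $\omega=a\otimes g$ with $\Phi_f(a)$ a scalar multiple of $\eta$, in particular closed. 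Applying $\Phi_f$ to $da$ yields $d\Phi_f(a)=0$, whence $da=0$ by injectivity, and $\omega\in\F^1(A_f,\g)$. This forces $\F(A_f,\g)=\F^1(A_f,\g)$, contradicting Lemma \ref{lem:flatirr}\eqref{gg3}.

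For \eqref{qq2}, again arguing by contradiction, suppose $f_{\sharp}^{*}\Hom(\pi_f,G)_{(1)}\subseteq\abf^{*}\Hom(\pi_{\abf},G)_{(1)}$. Intersecting with $f_{\sharp}^{*}\Hom(\pi_f,G)_{(1)}$ and invoking the equality just proved produces $f_{\sharp}^{*}\Hom(\pi_f,G)_{(1)}=f_{\sharp}^{*}\abf_f^{*}\Hom((\pi_f)_{\abf},G)_{(1)}$. Since $f_{\sharp}$ is surjective, $f_{\sharp}^{*}$ is a closed embedding by Lemma \ref{lem:cvnat}, hence injective on germs, so stripping it off yields $\Hom(\pi_f,G)_{(1)}=\abf_f^{*}\Hom((\pi_f)_{\abf},G)_{(1)}$ inside $\Hom(\pi_f,G)$. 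Theorem \ref{thm:b} applied to $M_f$ identifies the left-hand germ with $\F(A_f,\g)_{(0)}$, and Lemma \ref{lem:moreirr}\eqref{gr1} applied to $M_f$ identifies the right-hand germ with $\F^1(A_f,\g)_{(0)}$, so the two germs must have equal dimension. But Lemma \ref{lem:flatirr}\eqref{gg2} makes $\F(A_f,\g)_{(0)}$ irreducible and Lemma \ref{lem:flatirr}\eqref{gg3} makes $\F^1(A_f,\g)_{(0)}$ a proper closed subgerm, forcing strictly smaller dimension. This contradiction completes the argument.

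The main obstacle in \eqref{qq2} is the non-naturality of the Dimca--Papadima isomorphism of Theorem \ref{thm:b} with respect to the inclusion $\abf_f^{*}\Hom((\pi_f)_{\abf},G)\subseteq\Hom(\pi_f,G)$ for non-abelian $\g$, as stressed in Remark \ref{rem:nonnat}; this blocks a direct topological transcription of the argument for \eqref{qq1}. Comparing the two sides through their dimensions, which are determined by each germ in isolation and do not require a natural identification, is what sidesteps this naturality failure.
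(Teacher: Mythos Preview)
Your proof is correct and follows essentially the same approach as the paper's. The only notable differences are cosmetic: for \eqref{eq:cap} the paper invokes \cite[Rem.~7.8]{PS-16} to replace $\abf$ by $\ab$ near $1$ (your ``iff'' glosses over the torsion issue in $\pi_{\ab}$, but since $(\pi_f)_{\ab}$ is torsion-free for a curve your argument goes through), and for \eqref{qq2} the paper reaches the same contradiction via Lemma~\ref{lem:p3} (the Hopfian property) rather than your dimension comparison of a proper closed subgerm inside an irreducible germ.
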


\begin{proof}
We first establish equality \eqref{eq:cap}. 
As explained in \cite[Rem.~7.8]{PS-16}, the analytic germ 
$\abf^{*} \Hom (\pi_{\abf},G)_{(1)}$ coincides with the abelian 
part near $1$ of the representation variety $\Hom (\pi,G)$, 
and similarly for $\pi_f$.  It follows that we may replace the map 
$\abf$ by the abelianization map $\ab$ in \eqref{eq:cap}.

The inclusion "$\supseteq$" is an immediate consequence 
of naturality of abelianization.  Hence, it is enough to prove 
that any homomorphism $\rho \colon \pi_f \to G$ for which $\rho \circ f_{\sharp}$  
factors through abelianization has the same property. This in turn follows from the fact the
epimorphism $f_{\sharp}$ induces a surjection on derived subgroups, plus naturality. 
Our first claim is proved.

To prove \eqref{qq1}, suppose that $\Phi_f^{*} \F(A_f,\g)_{(0)}  
\subseteq \F^1(A,\g)_{(0)}$.  Then, by the injectivity of $\Phi_f$, 
we would have that $\F(A_f,\g)_{(0)}  \subseteq \F^1(A_f,\g)_{(0)}$, 
in contradiction with Lemma \ref{lem:flatirr}\eqref{gg3}. 

To prove \eqref{qq2}, suppose that 
$f_{\sharp}^{*} \Hom (\pi_f,G)_{(1)} \subseteq \abf^{*} \Hom (\pi_{\abf},G)_{(1)}$.  
From \eqref{eq:cap}, we deduce that
$\Hom (\pi_f,G)_{(1)} = \abf^{*} \Hom ((\pi_f)_{\abf},G)_{(1)}$,
by the surjectivity of $f_{\sharp}$.  We now consider the inclusion 
$\F^1(A_f,\g)_{(0)}  \subseteq \F(A_f,\g)_{(0)}$. We claim that our assumption leads
to the equality $\F^1(A_f,\g)_{(0)} = \F(A_f,\g)_{(0)}$, the same contradiction as before.

In view of Lemma \ref{lem:p3}, our claim follows from the existence of an 
isomorphism of germs, $\F^1(A_f,\g)_{(0)} \cong \F(A_f,\g)_{(0)}$. 
To construct such an isomorphism, we consider the $\dga$ 
$A_0=(\bwedge^{\hdot}H^1(M_f),d=0)$ from Lemma \ref{lem:abf}. 
Since $A_f$ is a finite model of $M_f$, we have an isomorphism 
\begin{equation}
\label{eq:fafpif}
\F(A_f,\g)_{(0)} \cong \Hom (\pi_f,G)_{(1)},
\end{equation} 
by Theorem \ref{thm:b}. Next, 
\begin{equation}
\label{eq:pifpif}
\Hom (\pi_f,G)_{(1)} = \abf^{*} \Hom ((\pi_f)_{\abf},G)_{(1)},
\end{equation}
by our assumption, and clearly 
\begin{equation}
\label{eq:abfpif}
\abf^{*} \Hom ((\pi_f)_{\abf},G)_{(1)} \cong \Hom ((\pi_f)_{\abf},G)_{(1)}.
\end{equation}

Again by Theorem \ref{thm:b}, 
\begin{equation}
\label{eq:hpfa}
\Hom ((\pi_f)_{\abf},G)_{(1)} \cong  \F(A_0,\g)_{(0)}.
\end{equation}
By  Lemma \ref{lem:abf}, 
\begin{equation}
\label{eq:ff1a}
\F(A_0,\g)_{(0)}= \F^1(A_0,\g)_{(0)}.
\end{equation}
Finally, 
\begin{equation}
\label{eq:ff11}
\F^1(A_0,\g)_{(0)} \cong \F^1(A_f,\g)_{(0)},
\end{equation} 
since $H^1(A_0)\cong H^1(A_f)$.
Putting things together completes our proof.
\end{proof}

\subsection{Redundant cases}
\label{subsec:red}

We now complete our analysis of the redundancies in the unions
\begin{align}
\label{eq:repincl-union}
\abf^{*} \Hom (\pi_{\abf},G)_{(1)} &\cup \bigcup_{f\in \cE(M)} 
f_{\sharp}^{*} \Hom (\pi_f,G)_{(1)}, 
\\
\label{eq:vincl-union}
\abf^{*} \VV^1_1 (\pi_{\abf},\iota)_{(1)} &\cup \bigcup_{f\in \cE(M)} 
f_{\sharp}^{*} \Hom (\pi_f,G)_{(1)},
\\
\label{eq:flatincl-union}
\F^1(A,\g)_{(0)} &\cup \bigcup_{f\in \mathcal{E}(M)}  \Phi_f^{*} \F(A_f,\g)_{(0)},
\\
\label{eq:rincl-union}
\Pi(A,\theta)_{(0)} &\cup \bigcup_{f\in \mathcal{E}(M)}  \Phi_f^{*} \F(A_f,\g)_{(0)}.
\end{align}

By the results from \S\ref{subsec:nonred}, we need to consider the following two cases.
If either \eqref{eq:repincl-union} or \eqref{eq:vincl-union} is redundant, then 
\begin{equation}
\label{eq:red1}
\abf^{*} \VV^1_1 (\pi_{\abf},\iota)_{(1)} \subseteq f_{\sharp}^{*} \Hom (\pi_f,G)_{(1)}, 
\ \text{for some $f\in \cE(M)$}.
\end{equation}
If either \eqref{eq:flatincl-union} or \eqref{eq:rincl-union} is redundant, then
\begin{equation}
\label{eq:red2} 
\Pi(A,\theta)_{(0)}  \subseteq \Phi_f^{*} \F(A_f,\g)_{(0)},
\ \text{for some $f\in \cE(M)$}.
\end{equation}

\begin{lemma}
\label{lem:red2}
If condition \eqref{eq:red2} holds, then $b_1(M_f)=b_1(M)$.
\end{lemma}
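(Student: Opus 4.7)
The plan is to combine the hypothesis \eqref{eq:red2} with a single well-chosen Lie-algebra element $g_0$ satisfying $\det\theta(g_0)=0$. This will convert the inclusion of jump loci into the statement that the injection $\Phi_f\colon A_f^1 \inj A^1$ already hits every closed form; once that is known, the induced map $H^1(\Phi_f)$ will turn out to be an isomorphism, and the equality of first Betti numbers will be immediate.

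First I would upgrade \eqref{eq:red2} to a global inclusion. As recorded in the discussion at the end of \S\ref{subsec:rk2}, both $\Pi(A,\theta)$ and $\Phi_f^{*}\F(A_f,\g)$ inherit positive weights from $A^1\otimes \g$, so by Lemma \ref{lem:p2} all irreducible components of $\Pi(A,\theta)$ pass through $0$, and Lemma \ref{lem:p1} then yields the global inclusion $\Pi(A,\theta) \subseteq \Phi_f^{*}\F(A_f,\g)$.

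Next I would produce a nonzero element $g_0\in \Zero(\det\circ\theta)$. For $G=\SL_2$ or $\Sol_2$ one has $\dim \g\ge 2$, so the polynomial $\det\circ\theta\colon \g\to \C$ either vanishes identically or is non-constant on a complex affine space of dimension at least two; in either case its zero locus is positive-dimensional and therefore contains points other than $0$. Fix such a $g_0$ and an arbitrary closed form $\eta\in A^1$ (noting that $A$ is connected, so $H^1(A)$ may be identified with $\ker(d\colon A^1\to A^2)$). Then $\eta\otimes g_0 \in \Pi(A,\theta)$, and by the previous step
\[
\eta\otimes g_0 \in \Phi_f^{*}\F(A_f,\g) \subseteq \Phi_f(A_f^1)\otimes \g,
\]
where the last inclusion uses the injectivity of $\Phi_f$ supplied by Lemma \ref{lem:p411}\eqref{w3}. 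Decomposing $\g=\C g_0\oplus W$ and projecting onto the $\C g_0$-factor forces $\eta=\Phi_f(\alpha)$ for a unique $\alpha\in A_f^1$.

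Finally I would observe that $\alpha$ is automatically closed: $\Phi_f(d\alpha)=d\Phi_f(\alpha)=d\eta=0$, and since $\Phi_f$ is injective in degree $2$ as well, $d\alpha=0$. As $A_f$ is connected, $\alpha$ represents a class in $H^1(A_f)$. Thus $H^1(\Phi_f)\colon H^1(A_f)\to H^1(A)$ is surjective; it is also injective, since $\Phi_f$ is already injective on $A_f^1$ and there are no nonzero exact $1$-forms to quotient by. Hence $H^1(\Phi_f)$ is an isomorphism, giving $b_1(M_f)=\dim H^1(A_f)=\dim H^1(A)=b_1(M)$, as required. The only step that requires any real thought is the existence of a nonzero zero of $\det\circ\theta$; the rest is a direct consequence of the positive-weights mechanism and the injectivity of $\Phi_f$ already established in the paper.
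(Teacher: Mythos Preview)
Your proof is correct, but it takes a different route from the paper's. The paper restricts $\theta$ to the $1$-dimensional nilpotent Lie subalgebra $\C\subseteq \g$ (the Lie algebra of the unipotent subgroup of $\Sol_2$), invokes Lemma~\ref{lem:unip} to identify $\Pi(A,\theta')$ with all of $\F^1(A,\C)=\F(A,\C)=H^1(A)\otimes\C$, and then intersects the germ inclusion with $A^1\otimes\C$ to obtain $H^1(A)_{(0)}\subseteq \im H^1(\Phi_f)_{(0)}$, hence $b_1(M)\le b_1(M_f)$. You instead globalize the inclusion via positive weights, pick a single nonzero $g_0\in \Zero(\det\circ\theta)$ by a bare dimension count, and do the linear algebra by hand (decompose $\g=\C g_0\oplus W$ and project) to see directly that every closed $\eta\in A^1$ lies in $\Phi_f(A_f^1)$. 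Your argument is more self-contained---it avoids both Lemma~\ref{lem:unip} and the specific subgroup structure of $G$---while the paper's argument recycles existing machinery and dovetails with the parallel proof of Lemma~\ref{lem:red1}. Two minor comments: the inclusion $\Phi_f^{*}\F(A_f,\g)\subseteq \Phi_f(A_f^1)\otimes\g$ does not actually require injectivity of $\Phi_f$ (it is just the image of $\Phi_f\otimes\id_\g$); and your existence argument for $g_0$ implicitly uses that $\det\circ\theta$ vanishes at $0$ (so is not a nonzero constant), which holds since $\theta(0)=0$ and $\dim V\ge 1$.
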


\begin{proof}
Let $\C \subseteq \g$ be the Lie algebra of the unipotent group $\C \subseteq G$ 
consisting of the matrices of $\Sol_2$ with $1$'s on the diagonal.
Denote by $\iota'\colon\C \to \GL(V)$ the 
restriction of $\iota$. Clearly, $d_1(\iota')=\theta'$, the restriction of
$\theta$ to the Lie algebra $\C$. Our assumption implies that 
$\Pi(A,\theta')_{(0)}  \subseteq \Phi_f^{*} \F(A_f,\g)_{(0)}$. 
We infer from Lemma \ref{lem:unip} that 
\begin{equation}
\label{eq:facfac}
\F(A,\C)_{(0)}=\F^1(A,\C)_{(0)}  \subseteq \Phi_f^{*} \F(A_f,\g)_{(0)}.
\end{equation}

Therefore,
$\F(A,\C)_{(0)} \subseteq \Phi_f^{*} \F(A_f,\C)_{(0)}$. In other words, 
$H^1(A) _{(0)} \subseteq \im H^1(\Phi_f)_{(0)}$. Hence, 
\begin{equation}
\label{eq:b1phi}
b_1(M)=\dim H^1(A) _{(0)} \le \dim  \im H^1(\Phi_f)_{(0)}.
\end{equation}

On the other hand, $H^1(\Phi_f)$ is identified with $H^1(f)$,
as recalled in \S\ref{subsec:os}, and $H^1(f)$ is injective, 
since $f_{\sharp}$ is surjective. In conclusion, $b_1(M) \le  b_1(M_f)$. 
Since clearly $b_1(M) \ge b_1(M_f)$, we are done.
\end{proof}

\begin{lemma}
\label{lem:red1}
If condition \eqref{eq:red1} holds, then $b_1(M_f)=b_1(M)$.
\end{lemma}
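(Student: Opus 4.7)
The plan is to mirror the proof of Lemma \ref{lem:red2}, but carried out entirely on the topological side, with Lemma \ref{lem:unip-bis} playing the role that Lemma \ref{lem:unip} played in the infinitesimal argument. First I would fix $\C\subseteq G$ to be the one-parameter unipotent subgroup of upper-triangular matrices with $1$'s on the diagonal, and write $\iota'\colon\C\to\GL(V)$ for the restriction of $\iota$. From the definitions one has $\VV^1_1(\pi_{\abf},\iota')=\VV^1_1(\pi_{\abf},\iota)\cap\Hom(\pi_{\abf},\C)$, and Lemma \ref{lem:unip-bis} applied to $\pi_{\abf}\cong\Z^{b_1(M)}$ gives $\Hom(\pi_{\abf},\C)_{(1)}=\VV^1_1(\pi_{\abf},\iota')_{(1)}$. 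Pulling back by $\abf$ then yields
\[
\abf^{*}\Hom(\pi_{\abf},\C)_{(1)}\subseteq \abf^{*}\VV^1_1(\pi_{\abf},\iota)_{(1)}.
\]

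Next I would combine this with the hypothesis \eqref{eq:red1} to obtain $\abf^{*}\Hom(\pi_{\abf},\C)_{(1)}\subseteq f_{\sharp}^{*}\Hom(\pi_f,G)_{(1)}$, and then intersect both sides with the closed subvariety $\Hom(\pi,\C)\subseteq\Hom(\pi,G)$. On the left, since $\C$ is abelian, the bijection $\abf^{*}$ identifies $\Hom(\pi_{\abf},\C)$ with all of $\Hom(\pi,\C)$. On the right, surjectivity of $f_{\sharp}$ forces any representation $\pi\to\C$ that factors through $f_{\sharp}$ to come from a representation $\pi_f\to\C$, hence $f_{\sharp}^{*}\Hom(\pi_f,G)\cap\Hom(\pi,\C)=f_{\sharp}^{*}\Hom(\pi_f,\C)$. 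The upshot is the inclusion of germs
\[
\Hom(\pi,\C)_{(1)}\subseteq f_{\sharp}^{*}\Hom(\pi_f,\C)_{(1)},
\]
whose reverse inclusion is automatic.

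To conclude, I would observe that $\Hom(\pi,\C)\cong H^1(\pi,\C)\cong\C^{b_1(M)}$ is an irreducible affine space passing through $1$, while $f_{\sharp}^{*}\Hom(\pi_f,\C)\cong\C^{b_1(M_f)}$ is a closed subvariety of it. Applying Lemma \ref{lem:p1} promotes the local equality of germs to a global equality of varieties, and comparing dimensions gives $b_1(M)=b_1(M_f)$. The main point to get right is the choice of auxiliary one-parameter subgroup: one has to work with the unipotent subgroup rather than the diagonal torus, because only for a unipotent group does Lemma \ref{lem:unip-bis} supply the crucial containment $\Hom(\pi_{\abf},\C)_{(1)}\subseteq\VV^1_1(\pi_{\abf},\iota)_{(1)}$ that converts the characteristic-variety hypothesis into a representation-variety statement.
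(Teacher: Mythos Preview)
Your argument is correct and follows essentially the same route as the paper's proof: restrict to the unipotent one-parameter subgroup $\C\subseteq G$, invoke Lemma~\ref{lem:unip-bis} to convert the $\VV^1_1$ hypothesis into a containment of $\Hom(\,\cdot\,,\C)$ germs, and then compare dimensions. The only cosmetic difference is that the paper concludes by directly comparing germ dimensions (and then citing the end of the proof of Lemma~\ref{lem:red2} for the reverse inequality), whereas you globalize via Lemma~\ref{lem:p1} before comparing; both are fine.
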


\begin{proof}
Define $\iota'$ and $\theta'$ as before. Note that 
$\VV^1_1 (\pi_{\abf},\iota') \subseteq \VV^1_1 (\pi_{\abf},\iota)$,
by construction. By Lemma \ref{lem:unip-bis}, 
$\VV^1_1 (\pi_{\abf},\iota')_{(1)}= \Hom (\pi_{\abf}, \C)_{(1)}$.
Thus, we infer from our assumption that 
\begin{equation}
\label{eq:abfpish}
\abf^{*} \Hom (\pi_{\abf}, \C)_{(1)} \subseteq f_{\sharp}^{*} \Hom (\pi_f,G)_{(1)}.
\end{equation}
Hence, 
\begin{equation}
\label{eq:abfshpif}
\abf^{*} \Hom (\pi_{\abf}, \C)_{(1)} \subseteq f_{\sharp}^{*} \Hom (\pi_f, \C)_{(1)}.
\end{equation}
Consequently, $\dim \Hom (\pi_{\abf}, \C)_{(1)} \le \dim  \Hom (\pi_f, \C)_{(1)}$, 
that is, $b_1(M) \le b_1(M_f)$.
Proceeding now as in the proof of Lemma \ref{lem:red2} 
completes the proof of this lemma.
\end{proof}

\begin{lemma}
\label{lem:tori}
Suppose $b_1(M) = b_1(M_f)$ for some $f\in \cE(M)$. Then:
\begin{enumerate}
\item \label{tor1}
$H^1(f)$ is an isomorphism.
\item \label{tor2}
$\cE(M) = \{ f\}$. 
\item \label{tor3}
$\F^1(A,\g)\subseteq \Phi_f^{*} \F(A_f,\g)$, for 
any finite-dimensional Lie algebra $\g$.
\item \label{tor4}
$\abf^{*} \Hom (\pi_{\abf}, G)_{(1)} \subseteq f_{\sharp}^{*} \Hom (\pi_f,G)_{(1)}$, 
for any linear algebraic group $G$.
\end{enumerate}
\end{lemma}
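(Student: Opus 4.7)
The plan is to handle the four parts in order, each building on the previous.

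For part (1), since $A^0 = A_f^0 = \C$ forces $H^1$ to coincide with the space of closed $1$-forms in degree one, the injectivity of $\Phi_f$ from Lemma \ref{lem:p411}\eqref{w3} makes $H^1(\Phi_f)$ injective; matching dimensions $\dim H^1(A_f) = b_1(M_f) = b_1(M) = \dim H^1(A)$ upgrades it to an isomorphism. For part (2), I invoke the irreducible decomposition \eqref{eq:infara}, $\RR^1_1(A) = \bigcup_{g \in \cE(M)} \im H^1(\Phi_g)$ (the $\{0\}$ term is omitted since $f \in \cE(M)$). Part (1) gives $\im H^1(\Phi_f) = H^1(A) = \F(A,\C)$, the whole ambient space, so any other $g \in \cE(M)$ would yield a component $\im H^1(\Phi_g) \subseteq \im H^1(\Phi_f)$, contradicting the irredundancy of the decomposition (which follows from the bijection of Theorem \ref{thm:arapura}).

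For part (3), take $\eta \otimes x \in \F^1(A,\g)$ with $\eta \in A^1$ closed and $x \in \g$. By part (1), there is a unique closed $\eta' \in A_f^1$ with $\Phi_f(\eta') = \eta$ (existence from surjectivity of $H^1(\Phi_f)$ together with $A^0_f = \C$; uniqueness from injectivity of $\Phi_f$ on $A_f^1$). Then $\omega := \eta' \otimes x$ lies in $\F^1(A_f,\g) \subseteq \F(A_f,\g)$ and satisfies $\Phi_f^*(\omega) = \eta \otimes x$, giving the desired inclusion.

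Part (4) reduces to a fact about free abelian groups. The surjection $f_\sharp \colon \pi \twoheadrightarrow \pi_f$ induces, by functoriality of the maximal torsion-free abelianization, a surjection $\phi \colon \pi_{\abf} \twoheadrightarrow (\pi_f)_{\abf}$. Both groups are isomorphic to $\Z^n$ with $n = b_1(M) = b_1(M_f)$, so $\phi$ is an isomorphism. Writing $\alpha$ for the abelianization map $\pi_f \to (\pi_f)_{\abf}$, the representation $\rho' := \rho_0 \circ \phi^{-1} \circ \alpha \colon \pi_f \to G$ associated to any $\rho_0 \in \Hom(\pi_{\abf}, G)$ satisfies $\rho' \circ f_\sharp = \rho_0 \circ \abf$, yielding the global inclusion $\abf^* \Hom(\pi_{\abf}, G) \subseteq f_\sharp^* \Hom(\pi_f, G)$, and a fortiori the claimed germ inclusion.

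The only delicate point I foresee is part (2), where one must invoke not merely \eqref{eq:infara} as a set equality but the fact that it is the \emph{irreducible} decomposition, whose components are pairwise incomparable and indexed injectively by $\cE(M)$ via Theorem \ref{thm:arapura}. The remaining parts are essentially formal manipulations enabled by part (1).
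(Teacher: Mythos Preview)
Your proof is correct and follows essentially the same approach as the paper. The only minor variation is in part~\eqref{tor2}, where you invoke the infinitesimal irreducible decomposition \eqref{eq:infara} of $\RR^1_1(A)$, whereas the paper argues on the topological side via the rank-one character tori and the bijection of Theorem~\ref{thm:arapura}; these are parallel versions of the same idea.
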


\begin{proof}
\eqref{tor1}
This claim is clear, since $H^1(f)$ is injective. 

\eqref{tor2}
Fix $g\in \cE(M)$. We start by noting that 
$g_{\sharp}^{*} \Hom (\pi_g, \C^{\times})$ is a connected affine subtorus 
of dimension $b_1(M_g)$ of the connected affine torus 
$\Hom (\pi_{\abf}, \C^{\times})$ of dimension $b_1(M)$, 
since $(\pi_g)_{\ab}=H_1(M_g, \Z)$ has no torsion. 
Thus, our assumption implies that 
$f_{\sharp}^{*} \Hom (\pi_f, \C^{\times})= \Hom (\pi_{\abf}, \C^{\times})$.
We infer that $g_{\sharp}^{*} \Hom (\pi_g, \C^{\times}) \subseteq 
f_{\sharp}^{*} \Hom (\pi_f, \C^{\times})$.  By Theorem \ref{thm:arapura}, 
we must have $g=f$, and we are done.

\eqref{tor3}
It is enough to note that $\Phi_f^{*} \F^1(A_f,\g)= \F^1(A,\g)$, 
since $H^1(\Phi_f)\equiv H^1(f)$ is an isomorphism.

\eqref{tor4}
Let $f_{\abf}\colon\pi_{\abf} \surj (\pi_f)_{\abf}$ be the 
(surjective) homomorphism induced by $f_{\sharp}$. 
Since $b_1(M) = b_1(M_f)$, we have that 
$\pi_{\abf}\cong (\pi_f)_{\abf}$. Hence, $f_{\abf}$
is an isomorphism, and consequently
\begin{equation}
\label{eq:fabfhom}
f_{\abf}^{*} \Hom ((\pi_f)_{\abf},G)_{(1)}= \Hom (\pi_{\abf},G)_{(1)}.
\end{equation}
This equality of germs implies that 
\begin{equation}
\label{eq:abfhomsharp}
\abf^{*} \Hom (\pi_{\abf}, G)_{(1)} = f_{\sharp}^{*}\circ \abf^{*} \Hom ((\pi_f)_{\abf},G)_{(1)}
\subseteq f_{\sharp}^{*} \Hom (\pi_f,G)_{(1)},
\end{equation}
as asserted. This completes the proof.
\end{proof}

\section{Proofs of the main results}
\label{sect:pf}

In this section, we provide proofs to Theorems \ref{thm:tran}--\ref{thm:loci} 
from the Introduction.  

\subsection{Transversality in the quasi-projective setting}
\label{subsec:trans}

Let $M$ be a quasi-pro\-jective manifold, and 
fix a  compactification $\overline{M}=M\cup D$, where 
$\overline{M}$ is a connected, smooth projective 
variety, and $D$ is a hypersurface arrangement in $\overline{M}$.

As before, let $(A^{\hdot}_{\hdot}\, , d)=\OS^{\hdot}_{\hdot}(\overline{M},D)$ 
be the corresponding  Orlik--Solomon model for $M$.  
By construction, the lower degree (called weight) is 
concentrated in the interval $[i,2i]$, in (upper) degree $i$.
The terminology comes from the fact that the induced lower grading in 
cohomology splits Deligne's weight filtration \cite{Du}.
Thus, an element $\Omega \in A^1\otimes \g$ has weight decomposition 
$\Omega =\Omega_1 +\Omega_2$, where $\Omega_j\in A^1_j\otimes \g$, 
for $j=1,2$. Note also that $H^1(A)=A^1 \cap \ker (d)$, since $A$ is connected.
When we write $\Omega \in H^1(A)\otimes \g$, we mean that $\partial \Omega=0$, 
where $\partial$ denotes $d\otimes \id_{\g}$. By construction, 
$\Omega_1 \in H^1(A)\otimes \g$.

\begin{lemma}
\label{lem:oscurve}
Let $(A , d)=\OS(\overline{M},D)$ 
be any Orlik--Solomon model.  If $\Omega \in \F(A,\g)$ 
and $\Omega_1=0$, then $\Omega \in H^1(A)\otimes \g$, for any 
finite-dimensional Lie algebra $\g$.
\end{lemma}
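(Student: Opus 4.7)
The proof is a straightforward weight-decomposition argument, exploiting the bigrading on the Orlik--Solomon model together with the Maurer--Cartan equation. Here is my plan.

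Assuming $\Omega_1 = 0$, we have $\Omega = \Omega_2 \in A^1_2 \otimes \g$, and it suffices to show that $\partial \Omega_2 = 0$, since then $\Omega_2 \in (A^1 \cap \ker d) \otimes \g = H^1(A) \otimes \g$. The plan is to expand the Maurer--Cartan equation $\partial \Omega + \tfrac{1}{2}[\Omega,\Omega] = 0$ and separate it according to the weight grading on $A^2$.

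First, I would recall from the Morgan--Dupont construction that the differential of the Orlik--Solomon model preserves the weight grading, i.e., $d \colon A^i_j \to A^{i+1}_j$, and that the product is weight-additive, i.e., $A^i_j \cdot A^{i'}_{j'} \subseteq A^{i+i'}_{j+j'}$. Both properties extend to $A\otimes \g$ in the obvious way: $\partial = d\otimes \id_{\g}$ preserves weight, and the graded Lie bracket on $A\otimes \g$ adds weights. Consequently, $\partial \Omega_2 \in A^2_2 \otimes \g$, while $[\Omega_2, \Omega_2] \in A^2_4 \otimes \g$.

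Since these two summands live in distinct weight pieces of $A^2 \otimes \g$, the Maurer--Cartan equation $\partial \Omega_2 + \tfrac{1}{2}[\Omega_2, \Omega_2] = 0$ splits into its weight-$2$ component, $\partial \Omega_2 = 0$, and its weight-$4$ component, $[\Omega_2,\Omega_2] = 0$. The first of these is exactly what we want: $\Omega_2$ is closed, hence $\Omega = \Omega_2 \in H^1(A) \otimes \g$.

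The only real input, and hence the potential obstacle, is the precise behavior of $d$ and the product with respect to the weight grading furnished by the Orlik--Solomon model; but these are built into the construction of $\OS(\overline{M}, D)$ as recalled in Section~\ref{subsec:os}, where it is noted that the bigrading is a model with positive weights in the sense of \cite{PS-15} and that the induced grading in cohomology splits Deligne's weight filtration. No assumption on $\g$ beyond finite-dimensionality is needed, since the argument is entirely formal in $\g$.
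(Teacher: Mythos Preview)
Your proposal is correct and takes essentially the same approach as the paper: both exploit the weight bigrading on the Orlik--Solomon model to separate the Maurer--Cartan equation by weight and extract $\partial\Omega_2=0$. The only cosmetic difference is that the paper first writes down the general weight-$2$ component $\partial\Omega_2+\tfrac{1}{2}[\Omega_1,\Omega_1]=0$ and then sets $\Omega_1=0$, whereas you substitute $\Omega_1=0$ before decomposing; the content is identical.
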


\begin{proof}
For an element $\Omega \in A^1\otimes \g$, let us examine the flatness 
equation, $\partial \Omega+\frac{1}{2} [\Omega, \Omega]=0$.
Let $\Omega =\Omega_1 +\Omega_2$ be the weight decomposition. 
We recall from \S\ref{subsec:os} that both the differential and the product 
of $A$ have degree zero with respect to the weights of $A$. Using this fact, 
the weight $2$ component of the flatness equation translates to the equality
$\partial \Omega_2+\frac{1}{2} [\Omega_1, \Omega_1]=0$, which proves our claim.
\end{proof}

Fix now a convenient compactification $\oM=M\cup D$.  
Then any admissible map $f\colon M\to M_f$ induces a morphism $\Phi_f \colon 
A_f\to A$ between the corresponding OS-models, and this, in turn, induces a 
morphism $\Phi_f^{*}\colon \F(A_f,\g) \to \F(A,\g)$ between the corresponding 
varieties of $\g$-flat connections. 
The next result proves Theorem \ref{thm:tran}, Part \eqref{fg1} from the Introduction.

\begin{theorem}
\label{thm:nonabtrans}
Let $M$ be a quasi-projective manifold, and let $\g$ be a finite-dimensional 
Lie algebra. For any distinct $f,g\in \cE(M)$, 
\[
\Phi_f^{*} \F(A_f,\g) \cap \Phi_g^{*} \F(A_g,\g)= \{ 0\}.
\]
\end{theorem}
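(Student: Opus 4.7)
The plan is to exploit the weight bigrading on the Orlik--Solomon model $A = \OS(\overline{M}, D)$ in order to reduce the claim to a rank-one transversality statement at the level of first cohomology. Let $\Omega \in \Phi_f^{*}\F(A_f,\g)\cap \Phi_g^{*}\F(A_g,\g)$, so that $\Omega = \Phi_f^{*}(\omega_f) = \Phi_g^{*}(\omega_g)$ for some flat connections $\omega_f \in \F(A_f,\g)$ and $\omega_g \in \F(A_g,\g)$. Decompose $\Omega = \Omega_1 + \Omega_2$ according to the weight grading $A^1 = A^1_1 \oplus A^1_2$, and similarly for $\omega_f$ and $\omega_g$. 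Because the maps $\Phi_f$ and $\Phi_g$ come from regular morphisms of pairs, they are weight-preserving, so $\Omega_j = \Phi_f^{*}((\omega_f)_j) = \Phi_g^{*}((\omega_g)_j)$ for $j=1,2$.

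First I would show that $\Omega_1 = 0$. As recorded just before Lemma~\ref{lem:oscurve}, the weight $1$ part of an OS model sits inside $\ker d$, so $(\omega_f)_1 \in H^1(A_f)\otimes \g$ and the identity $\Omega_1 = \Phi_f^{*}((\omega_f)_1)$ exhibits $\Omega_1$ as a class in $\im H^1(\Phi_f)\otimes \g$; the same argument through $g$ places $\Omega_1$ in $\im H^1(\Phi_g)\otimes \g$. Rank-one transversality for quasi-projective manifolds, namely the assertion that $\im H^1(\Phi_f)\cap \im H^1(\Phi_g) = \{0\}$ in $H^1(A)$ for distinct $f,g \in \cE(M)$ (this is the $\g = \C$ case of the theorem, cf.~\cite{DPS-duke}), then forces $\Omega_1 = 0$. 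Given this, Lemma~\ref{lem:oscurve} applied to $\Omega \in \F(A,\g)$ yields $\Omega \in H^1(A)\otimes \g$; in other words, $\Omega$ is closed. Since $\Phi_f$ is an injective $\dga$ map by Lemma~\ref{lem:p411}\eqref{w3}, the relation $\Phi_f^{*}(d\omega_f) = d\Omega = 0$ gives $d\omega_f = 0$, so $\omega_f$ represents a class in $H^1(A_f)\otimes \g$ and $\Omega \in \im H^1(\Phi_f)\otimes \g$. By symmetry, $\Omega \in \im H^1(\Phi_g)\otimes \g$ as well, and a second appeal to rank-one transversality yields $\Omega = 0$.

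The main obstacle is the rank-one transversality statement itself. The infinitesimal Arapura decomposition \eqref{eq:infara} identifies the subspaces $\im H^1(\Phi_f)$ as the positive-dimensional irreducible components of $\RR^1_1(A)$, but this decomposition alone does not exclude two distinct components from sharing a line through the origin; the stronger transversality requires additional Hodge-theoretic or Castelnuovo--de Franchis-type input in the quasi-projective setting, which is supplied by the analysis in \cite{DPS-duke}. The weight decomposition above is precisely the mechanism that promotes this abelian transversality to an arbitrary finite-dimensional Lie algebra~$\g$.
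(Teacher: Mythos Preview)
Your proof is correct and follows essentially the same approach as the paper: reduce to the rank-one transversality $\im H^1(\Phi_f)\cap\im H^1(\Phi_g)=\{0\}$ from \cite{DPS-duke} via the weight decomposition and Lemma~\ref{lem:oscurve}. The only cosmetic difference is that the paper applies Lemma~\ref{lem:oscurve} to $\omega_f\in\F(A_f,\g)$ after first using injectivity of $\Phi_f$ to conclude $(\omega_f)_1=0$, whereas you apply it to $\Omega\in\F(A,\g)$ and then invoke injectivity afterward to deduce $d\omega_f=0$; both orderings are valid and the underlying mechanism is identical.
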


\begin{proof}
We start by noting that
\begin{equation}
\label{eq:rk1tran}
\im H^1(\Phi_f) \cap \im H^1(\Phi_g) = \{ 0\}.
\end{equation}
Indeed, $\im H^1(\Phi_f)$ is naturally identified with $\im H^1(f)$, 
and similarly for $g$. On the other hand, Theorem \ref{thm:arapura} 
yields a natural identification of $\im H^1(f)$ with the tangent space 
$T_1( f_{\sharp}^{*} \Hom (\pi_f, \C^{\times}))$ to the corresponding 
irreducible component through $1$ of $\VV^1_1(M)$, and similarly for $g$. 
Finally, as shown in \cite[Thm.~C(2)]{DPS-duke}, 
\begin{equation}
\label{eq:t1f}
T_1( f_{\sharp}^{*} \Hom (\pi_f, \C^{\times}))\cap T_1( g_{\sharp}^{*} 
\Hom (\pi_g, \C^{\times})) = \{ 0\}.
\end{equation}

Suppose now that $\Phi_f^{*}(\Omega)=\Phi_g^{*}(\Omega')$,  for some 
$\Omega \in \F(A_f,\g)$ and $\Omega' \in \F(A_g,\g)$. Consider the weight 
decompositions, $\Omega =\Omega_1 +\Omega_2$ and 
$\Omega' =\Omega'_1 +\Omega'_2$. We infer that 
$\Phi_f^{*}(\Omega_1)=\Phi_g^{*}(\Omega'_1)$, since Orlik--Solomon 
$\dga$ maps preserve weight. As mentioned before, 
$\Omega_1 \in H^1(A_f)\otimes \g$ and $\Omega'_1 \in H^1(A_g)\otimes \g$. 
We infer then from \eqref{eq:rk1tran} that 
$\Phi_f^{*}(\Omega_1)=\Phi_g^{*}(\Omega'_1)=0$. 
Hence, $\Omega_1=\Omega'_1=0$, since 
$\Phi_f$ and $\Phi_g$ are injective, by Lemma \ref{lem:p411}.
Our assumption becomes then $\Phi_f^{*}(\Omega_2)=\Phi_g^{*}(\Omega'_2)$. 
On the other hand, we know from Lemma \ref{lem:oscurve}
that $\Omega_2 \in H^1(A_f)\otimes \g$ and $\Omega'_2 \in H^1(A_g)\otimes \g$. 
Therefore, $\Phi_f^{*}(\Omega_2)=\Phi_g^{*}(\Omega'_2)=0$, by the same 
argument as before. Our proof is complete.
\end{proof}

Let us point out that the transversality property from Theorem \ref{thm:nonabtrans} 
is a non-abelian generalization of the aforementioned rank $1$ result from 
\cite[Thm.~C(2)]{DPS-duke}. 

\subsection{Topological and infinitesimal jump loci}
\label{subsec:pfmain}

We now turn to the proof of Theorem \ref{thm:main} from the 
Introduction. As before, we shall work with a fixed convenient 
compactification $\overline{M}$ of a quasi-projective 
manifold $M$ (which we will assume satisfies $b_1(M)>0$), 
and we shall let $A$ denote the corresponding Orlik--Solomon 
model for $M$.  The bulk of the proof is contained in the next 
three lemmas.  

\begin{lemma}
\label{lem:main step1}
If the inclusion \eqref{eq:flatincl-intro} is an equality, then 
the inclusion \eqref{eq:repincl-intro} becomes an equality near $1$.
\end{lemma}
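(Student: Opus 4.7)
The plan is to promote the hypothesized global equality of varieties to equality of germs at $0$, transport it across the abstract analytic isomorphism $\Psi\colon \F(A,\g)_{(0)} \isom \Hom(\pi,G)_{(1)}$ supplied by Theorem \ref{thm:b}, and then match the resulting decomposition of $X := \Hom(\pi,G)_{(1)}$ against the topological union on the right-hand side of \eqref{eq:repincl-intro} by dimension and irreducibility. The crucial point is that $\Psi$ need not intertwine $\Phi_f^{*}$ with $f_{\sharp}^{*}$, nor $\F^1(A,\g)_{(0)}$ with the abelian piece (see Remark \ref{rem:nonnat}); hence a combinatorial device is needed.

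On the infinitesimal side, each summand of the hypothesized decomposition is irreducible, by Lemmas \ref{lem:flatirr}\eqref{gg2} and \ref{lem:moreirr}\eqref{gr3}. Via $\Psi$, this presents $X$ as a finite union of irreducible closed subgerms $X_1,\dots,X_n$ of specified dimensions. On the topological side, the candidate components $\abf^{*}\Hom(\pi_{\abf},G)_{(1)}$ and $f_{\sharp}^{*}\Hom(\pi_f,G)_{(1)}$, for $f\in\cE(M)$, are closed subgerms of $X$ that are likewise irreducible, with the same respective dimensions as their infinitesimal counterparts (again Lemmas \ref{lem:flatirr}, \ref{lem:moreirr}).

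I then split into two cases according to whether $b_1(M_f)=b_1(M)$ for some $f\in\cE(M)$. In the generic case $b_1(M_f)<b_1(M)$ for all $f$, the topological family is mutually non-nested by Lemmas \ref{lem:nonred1}\eqref{nr1} and \ref{lem:nonred2}\eqref{qq2}, and Lemma \ref{lem:irrtrick}, applied with the $X_i$ as above and the $Y_i$ the topological subgerms, yields $X=\bigcup_i Y_i$, which is precisely the asserted equality near $1$ in \eqref{eq:repincl-intro}. In the degenerate case, Lemma \ref{lem:tori} forces $\cE(M)=\{f\}$ and $\F^1(A,\g)\subseteq\Phi_f^{*}\F(A_f,\g)$, so the hypothesis collapses to $\F(A,\g)=\Phi_f^{*}\F(A_f,\g)$. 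Passing to germs and applying $\Psi$, the germ $X$ becomes irreducible of the same dimension as its closed irreducible subgerm $f_{\sharp}^{*}\Hom(\pi_f,G)_{(1)}$, forcing equality; the abelian piece is then absorbed via Lemma \ref{lem:tori}\eqref{tor4}.

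The main obstacle is precisely the non-naturality of $\Psi$ flagged in Remark \ref{rem:nonnat}: one cannot simply identify each infinitesimal irreducible component with its ostensibly corresponding topological component through the isomorphism of Theorem \ref{thm:b}. The device of Lemma \ref{lem:irrtrick}, together with the non-redundancy facts gathered in \S\ref{subsec:nonred}--\S\ref{subsec:red}, is what bypasses this, reducing the argument to a bookkeeping comparison of dimensions and irreducibility between the two sides of the isomorphism.
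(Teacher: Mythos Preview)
Your proof is correct and follows essentially the same route as the paper's: the paper splits according to whether the topological union \eqref{eq:repincl-union} is redundant, which (via Lemmas \ref{lem:red1} and \ref{lem:tori}\eqref{tor4}) is equivalent to your Betti-number dichotomy, and then in the non-redundant case applies Lemma \ref{lem:irrtrick} exactly as you do, while in the degenerate case it obtains \eqref{eq:repincl-red} through a chain of germ isomorphisms and Lemma \ref{lem:p3}, which is your irreducibility-plus-equal-dimension argument in slightly different dress. One small point to tighten: in your generic case, full mutual non-nesting of the $Y_i$ also requires $\abf^{*}\Hom(\pi_{\abf},G)_{(1)}\not\subseteq f_{\sharp}^{*}\Hom(\pi_f,G)_{(1)}$, and this third direction is not covered by Lemmas \ref{lem:nonred1}\eqref{nr1} and \ref{lem:nonred2}\eqref{qq2} alone---it follows from your case hypothesis $b_1(M_f)<b_1(M)$ together with Lemma \ref{lem:red1}.
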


\begin{proof}
Recall from Theorem \ref{thm:b} that 
$\F(A,\g)_{(0)} \cong  \Hom (\pi ,G)_{(1)}$, as analytic germs. 
By assumption, the inclusion \eqref{eq:flatincl-intro} is an equality 
near $0$.  Suppose first that the union
\eqref{eq:repincl-union} has no redundancies.  
By Lemma \ref{lem:irrtrick} and the results from \S\ref{subsec:dimirr},  
the inclusion \eqref{eq:repincl-intro} is then an equality near $1$, 
thereby verifying our claim.   

Now suppose that the union \eqref{eq:repincl-union} is redundant.  
Then \eqref{eq:red1} also holds. 
Hence, by Lemmas \ref{lem:red1} and \ref{lem:tori}, we have that 
$\cE(M) = \{ f\}$ and $H^1(f)$ is an isomorphism.   
Furthermore, by Lemma \ref{lem:tori} again, our claim in this case 
reduces to proving the equality
\begin{equation}
\label{eq:repincl-red}
\Hom (\pi ,G)_{(1)}= f_{\sharp}^{*} \Hom (\pi_f,G)_{(1)}.
\end{equation}

Our hypothesis regarding \eqref{eq:flatincl-intro} gives the equality
\begin{equation}
\label{eq:flatincl-red1}
\F(A,\g)_{(0)} = \F^1(A,\g)_{(0)} \cup \Phi_f^{*} \F(A_f,\g)_{(0)}.
\end{equation}
Again by Lemmas \ref{lem:red1} and \ref{lem:tori}, 
equation \eqref{eq:flatincl-red1} becomes
\begin{equation}
\label{eq:flatincl-red}
\F(A,\g)_{(0)} = \Phi_f^{*} \F(A_f,\g)_{(0)}.
\end{equation}
As seen before, $\Hom (\pi ,G)_{(1)}\cong \F(A,\g)_{(0)}$.
Plainly, $\Phi_f^{*} \F(A_f,\g)_{(0)}\cong \F(A_f,\g)_{(0)}$. Furthermore,
$\F(A_f,\g)_{(0)}\cong \Hom (\pi_f ,G)_{(1)}$, again by Theorem \ref{thm:b}.
Finally, $\Hom (\pi_f ,G)_{(1)}\cong  f_{\sharp}^{*} \Hom (\pi_f,G)_{(1)}$.

In conclusion, the equality from \eqref{eq:flatincl-red} implies that 
$\Hom (\pi ,G)_{(1)}\cong f_{\sharp}^{*} \Hom (\pi_f,G)_{(1)}$.
Therefore, by Lemma \ref{lem:p3}, equality \eqref{eq:repincl-red} holds, 
and we are done.
\end{proof}

\begin{lemma}
\label{lem:main step2}
If the inclusion \eqref{eq:flatincl-intro} is an equality, then 
the inclusion \eqref{eq:vincl-intro} becomes an equality near $1$.
\end{lemma}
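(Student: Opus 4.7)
The plan is to upgrade the hypothesized equality in \eqref{eq:flatincl-intro} to equality in \eqref{eq:rincl-intro} via Corollary \ref{cor:1to2}, and then transport that information from the infinitesimal side to the topological side using the germ isomorphism $\RR^1_1(A,\theta)_{(0)} \cong \VV^1_1(\pi,\iota)_{(1)}$ of Theorem \ref{thm:b}. The essential subtlety, as emphasized in Remark \ref{rem:nonnat}, is that this isomorphism is not known to be natural for $G=\SL_2$ or $\Sol_2$, so the pieces $f_{\sharp}^{*}\Hom(\pi_f,G)_{(1)}$ and $\abf^{*}\VV^1_1(\pi_{\abf},\iota)_{(1)}$ on the topological side cannot be matched individually with $\Phi_f^{*}\F(A_f,\g)_{(0)}$ and $\Pi(A,\theta)_{(0)}$ on the infinitesimal side. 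Instead, the match is achieved abstractly through Lemma \ref{lem:irrtrick}, following the template of the proof of Lemma \ref{lem:main step1}.

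First I would split into two cases according to whether the proposed union \eqref{eq:vincl-union} is redundant. In the \emph{non-redundant} case, I would apply Lemma \ref{lem:irrtrick} with $X=\VV^1_1(\pi,\iota)_{(1)}$, whose irreducible decomposition is obtained by transporting the infinitesimal equality \eqref{eq:rincl} (supplied by Corollary \ref{cor:1to2}) through the isomorphism of Theorem \ref{thm:b}, producing $X_i$'s given by $\Pi(A,\theta)_{(0)}$ and $\Phi_f^{*}\F(A_f,\g)_{(0)}$, $f\in\cE(M)$. The corresponding $Y_i$'s are $\abf^{*}\VV^1_1(\pi_{\abf},\iota)_{(1)}$ and $f_{\sharp}^{*}\Hom(\pi_f,G)_{(1)}$, which are irreducible by Lemmas \ref{lem:flatirr}\eqref{gg2} and \ref{lem:moreirr}\eqref{gr3}, whose pairwise non-inclusions are Lemmas \ref{lem:nonred1}\eqref{nr1} and \ref{lem:nonred2}\eqref{qq2}, and whose dimensions match those of the $X_i$'s via the germ isomorphisms of Lemma \ref{lem:flatirr} and Lemma \ref{lem:moreirr}\eqref{gr1}--\eqref{gr2}. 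Lemma \ref{lem:irrtrick}\eqref{xx2} then yields equality in \eqref{eq:vincl-intro} near $1$.

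In the \emph{redundant} case, condition \eqref{eq:red1} holds, so by Lemmas \ref{lem:red1} and \ref{lem:tori} we have $\cE(M)=\{f\}$, $H^1(f)$ is an isomorphism, and $\abf^{*}\Hom(\pi_{\abf},G)_{(1)}\subseteq f_{\sharp}^{*}\Hom(\pi_f,G)_{(1)}$. Moreover, Lemma \ref{lem:tori}\eqref{tor3} gives $\F^1(A,\g)\subseteq\Phi_f^{*}\F(A_f,\g)$, so $\Pi(A,\theta)_{(0)}\subseteq\Phi_f^{*}\F(A_f,\g)_{(0)}$, and the equality \eqref{eq:rincl} collapses to
\[
\RR^1_1(A,\theta)_{(0)}=\Phi_f^{*}\F(A_f,\g)_{(0)}.
\]
Chaining the germ isomorphisms $\VV^1_1(\pi,\iota)_{(1)}\cong\RR^1_1(A,\theta)_{(0)}$ (Theorem \ref{thm:b}), $\Phi_f^{*}\F(A_f,\g)_{(0)}\cong\F(A_f,\g)_{(0)}$, $\F(A_f,\g)_{(0)}\cong\Hom(\pi_f,G)_{(1)}$ (Theorem \ref{thm:b} again), and $\Hom(\pi_f,G)_{(1)}\cong f_{\sharp}^{*}\Hom(\pi_f,G)_{(1)}$ (Lemma \ref{lem:flatirr}), we conclude that $\VV^1_1(\pi,\iota)_{(1)}$ is abstractly isomorphic to $f_{\sharp}^{*}\Hom(\pi_f,G)_{(1)}$; since the latter embeds in the former, Lemma \ref{lem:p3} forces equality, which is precisely \eqref{eq:vincl-intro} near $1$ in this reduced setting.

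The main obstacle is exactly the non-naturality issue in Remark \ref{rem:nonnat}: were the isomorphism of Theorem \ref{thm:b} known to intertwine $\Phi_f^{*}$ with $f_{\sharp}^{*}$ and $\abf$ with its infinitesimal analogue, the result would follow almost trivially by transporting the equality piece-by-piece. All of the technical work in \S\ref{subsec:dimirr}--\ref{subsec:red} (irreducibility and dimension counts for the germs, together with the non-redundancy arguments) is precisely what is needed to feed the combinatorial replacement, Lemma \ref{lem:irrtrick}, in the absence of such naturality.
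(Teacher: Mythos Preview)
Your proposal is correct and follows essentially the same approach as the paper's proof: invoke Corollary \ref{cor:1to2} to pass from \eqref{eq:flatincl-intro} to \eqref{eq:rincl-intro}, then in the non-redundant case apply Lemma \ref{lem:irrtrick} (with the irreducibility and dimension inputs from \S\ref{subsec:dimirr}), and in the redundant case use Lemmas \ref{lem:red1} and \ref{lem:tori} to collapse to a single $f$ and finish via the chain of germ isomorphisms and Lemma \ref{lem:p3}. Your explicit discussion of the non-naturality obstruction from Remark \ref{rem:nonnat} and the parallel with Lemma \ref{lem:main step1} is accurate and matches the paper's structure.
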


\begin{proof} 
We infer from our assumption that the inclusion \eqref{eq:rincl-intro} is 
an equality, by Corollary \ref{cor:1to2}.
Set $X= \RR^1_1(A,\theta)_{(0)} \cong \VV^1_1 (\pi,\iota)_{(1)}$, 
cf.~Theorem \ref{thm:b}.  If the inclusion \eqref{eq:rincl-intro} is 
an equality near $0$ and the union \eqref{eq:vincl-union} has 
no redundancies, then the inclusion \eqref{eq:vincl-intro} is an 
equality near $1$, as claimed, by Lemma \ref{lem:irrtrick} 
and the results from \S\ref{subsec:dimirr}. 

If the union \eqref{eq:vincl-union} is redundant, we may assume also 
that \eqref{eq:red1} holds. Hence, $\cE(M) = \{ f\}$ and $H^1(f)$ is an 
isomorphism, by Lemmas \ref{lem:red1} and \ref{lem:tori}. 
By Lemma \ref{lem:tori}, we are left with  proving the equality
\begin{equation}
\label{eq:vincl-red}
\VV^1_1 (\pi,\iota)_{(1)}= f_{\sharp}^{*} \Hom (\pi_f,G)_{(1)}.
\end{equation}

Since the inclusion \eqref{eq:rincl-intro} is a global equality, 
we deduce the local equality
\begin{equation}
\label{eq:rincl-red1}
\RR^1_1(A,\theta)_{(0)}= \Pi(A,\theta)_{(0)} \cup \Phi_f^{*} \F(A_f,\g)_{(0)}.
\end{equation}
Again by Lemmas \ref{lem:red1} and \ref{lem:tori}, 
equality \eqref{eq:rincl-red1} becomes
\begin{equation}
\label{eq:rincl-red}
\RR^1_1(A,\theta)_{(0)}= \Phi_f^{*} \F(A_f,\g)_{(0)}.
\end{equation}
As we mentioned before, $\VV^1_1 (\pi,\iota)_{(1)}\cong \RR^1_1(A,\theta)_{(0)}$. 
Next, we have that 
\[
\Phi_f^{*} \F(A_f,\g)_{(0)}\cong \F(A_f,\g)_{(0)}\cong \Hom (\pi_f ,G)_{(1)}\cong  
f_{\sharp}^{*} \Hom (\pi_f,G)_{(1)},
\]
as in the proof of Lemma \ref{lem:main step1}. 
Therefore,  \eqref{eq:rincl-red} implies that 
$\VV^1_1 (\pi,\iota)_{(1)}\cong f_{\sharp}^{*} \Hom (\pi_f,G)_{(1)}$.
Hence, equality \eqref{eq:vincl-red} holds, by Lemma \ref{lem:p3}, 
and this completes our proof.
\end{proof}

\begin{lemma}
\label{lem:main step3}
If the inclusion \eqref{eq:repincl-intro} is an equality near $1$, then
the inclusion \eqref{eq:flatincl-intro} is also an equality.
\end{lemma}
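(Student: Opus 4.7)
My plan is to transfer the assumed topological equality near $1$ into a local equality at $0$ on the infinitesimal side, and then invoke Lemma \ref{lem:loctogl} to promote the latter to the required global equality. The three technical inputs are (i) Theorem \ref{thm:b}, which supplies an abstract analytic-germ isomorphism $\F(A,\g)_{(0)}\cong\Hom(\pi,G)_{(1)}$; (ii) the identifications of the subvarieties appearing on the two sides, provided by Lemmas \ref{lem:flatirr} and \ref{lem:moreirr}; and (iii) the combinatorial matching mechanism of Lemma \ref{lem:irrtrick}. The last item is needed precisely because, as flagged in Remark \ref{rem:nonnat}, the germ isomorphism of Theorem \ref{thm:b} is not known to be natural when $G=\SL_2$ or $\Sol_2$, so individual components cannot be transported one by one.

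I will split the argument into two cases, mirroring the case-split structure of the proofs of Lemmas \ref{lem:main step1} and \ref{lem:main step2}. First, suppose the topological union \eqref{eq:repincl-union} is non-redundant. Transporting the assumed decomposition of $\Hom(\pi,G)_{(1)}$ across the germ isomorphism of Theorem \ref{thm:b}, I can write $\F(A,\g)_{(0)}=\bigcup_{i\in I}X_i$ as a union of irreducible germs indexed by $I=\{0\}\cup\cE(M)$. On the infinitesimal side, the candidate pieces
\[
Y_0=\F^1(A,\g)_{(0)},\qquad Y_f=\Phi_f^{*}\F(A_f,\g)_{(0)}\quad (f\in\cE(M))
\]
are irreducible by Lemmas \ref{lem:flatirr} and \ref{lem:moreirr}, and pairwise non-comparable by Lemmas \ref{lem:nonred1}\eqref{nr2} and \ref{lem:nonred2}\eqref{qq1}. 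The germ identifications $\abf^{*}\Hom(\pi_{\abf},G)_{(1)}\cong\F^1(A,\g)_{(0)}$ and $f_{\sharp}^{*}\Hom(\pi_f,G)_{(1)}\cong\Phi_f^{*}\F(A_f,\g)_{(0)}$ (again Lemmas \ref{lem:flatirr} and \ref{lem:moreirr}) give $\dim X_i=\dim Y_i$ for each $i\in I$. Applying Lemma \ref{lem:irrtrick} to $\bigcup_i Y_i\subseteq\F(A,\g)_{(0)}=\bigcup_i X_i$ will then yield the local equality $\F(A,\g)_{(0)}=\bigcup_i Y_i$, and Lemma \ref{lem:loctogl} will upgrade it to the global equality \eqref{eq:flatincl-intro}.

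Second, suppose instead that the topological union is redundant, so that condition \eqref{eq:red1} holds. Lemmas \ref{lem:red1} and \ref{lem:tori} then force $\cE(M)=\{f\}$ with $H^1(f)$ an isomorphism, and the hypothesis collapses to $\Hom(\pi,G)_{(1)}=f_{\sharp}^{*}\Hom(\pi_f,G)_{(1)}$. Chaining Theorem \ref{thm:b} with the germ identifications produces an abstract isomorphism $\F(A,\g)_{(0)}\cong\Phi_f^{*}\F(A_f,\g)_{(0)}$; Lemma \ref{lem:p3} will then turn the a priori inclusion $\Phi_f^{*}\F(A_f,\g)_{(0)}\subseteq\F(A,\g)_{(0)}$ into an equality. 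Since Lemma \ref{lem:tori}\eqref{tor3} already gives $\F^1(A,\g)\subseteq\Phi_f^{*}\F(A_f,\g)$, no other infinitesimal piece is needed to cover $\F(A,\g)_{(0)}$, and a final invocation of Lemma \ref{lem:loctogl} delivers the global equality of \eqref{eq:flatincl-intro}.

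The hard part, and the reason I cannot simply paraphrase Lemma \ref{lem:main step1}, is the direction of transfer: I start from a topological equality and must produce an infinitesimal one, while the only bridge I have (Theorem \ref{thm:b}) is a non-natural germ isomorphism. The workaround is exactly the dimension-plus-irreducibility bookkeeping of Lemma \ref{lem:irrtrick}, so the real technical content lies in checking that its hypotheses apply on both sides: this is what the material of Sections \ref{subsec:dimirr} and \ref{subsec:nonred} was set up to supply, and one has to separately dispatch the degenerate case in which the topological union is already redundant, via Lemma \ref{lem:tori} and the Hopfian argument of Lemma \ref{lem:p3}.
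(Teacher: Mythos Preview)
Your overall strategy matches the paper's: reduce to a local statement via Lemma~\ref{lem:loctogl}, use Theorem~\ref{thm:b} to identify $\F(A,\g)_{(0)}$ with $\Hom(\pi,G)_{(1)}$, and then invoke Lemma~\ref{lem:irrtrick} with the infinitesimal pieces playing the role of the $Y_i$. The redundant case is also handled the same way, via Lemma~\ref{lem:tori} and Lemma~\ref{lem:p3}.

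There is, however, one genuine gap in your non-redundant case. Lemma~\ref{lem:irrtrick} requires $Y_i\not\subseteq Y_j$ for \emph{all} $i\ne j$, but the lemmas you cite give only part of this: Lemma~\ref{lem:nonred1}\eqref{nr2} handles $Y_f\not\subseteq Y_g$ for $f\ne g$ in $\cE(M)$, and Lemma~\ref{lem:nonred2}\eqref{qq1} handles $Y_f\not\subseteq Y_0$. Neither gives you $Y_0=\F^1(A,\g)_{(0)}\not\subseteq \Phi_f^{*}\F(A_f,\g)_{(0)}=Y_f$. This is exactly the possible redundancy of the \emph{infinitesimal} union \eqref{eq:flatincl-union}, not the topological one you split on. The paper avoids the issue by making the case split on whether \eqref{eq:flatincl-union} is redundant: in its non-redundant branch, $Y_0\not\subseteq Y_f$ is part of the case hypothesis, and in its redundant branch it goes through \eqref{eq:red2} and Lemma~\ref{lem:red2} (rather than \eqref{eq:red1} and Lemma~\ref{lem:red1}) to reach $b_1(M_f)=b_1(M)$ and then Lemma~\ref{lem:tori}.

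Your split can be salvaged, since the two notions of redundancy are in fact equivalent: if $\F^1(A,\g)_{(0)}\subseteq\Phi_f^{*}\F(A_f,\g)_{(0)}$ then $\Pi(A,\theta)_{(0)}\subseteq\Phi_f^{*}\F(A_f,\g)_{(0)}$, so \eqref{eq:red2} holds, whence $b_1(M_f)=b_1(M)$ by Lemma~\ref{lem:red2}, and then Lemma~\ref{lem:tori}\eqref{tor4} forces the topological union \eqref{eq:repincl-union} to be redundant as well, contradicting your Case~1 assumption. You should insert this argument (or simply adopt the paper's case split on the infinitesimal side) to close the gap.
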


\begin{proof}
By Lemma \ref{lem:loctogl}, it is enough to show that the inclusion 
\eqref{eq:flatincl-intro} becomes an equality near $0$. 
Set $X=\F(A,\g)_{(0)} \cong  \Hom (\pi ,G)_{(1)}$, cf.~Theorem \ref{thm:b}.
If the inclusion \eqref{eq:repincl-intro} is an equality near $1$ and the union
\eqref{eq:flatincl-union} has no redundancies, then 
the inclusion \eqref{eq:flatincl-intro} is an equality near $0$, by
Lemma \ref{lem:irrtrick} and the results from \S\ref{subsec:dimirr}. 

If the union \eqref{eq:flatincl-union} is redundant, we may assume 
also that \eqref{eq:red2} holds.  Hence, $\cE(M) = \{ f\}$ and $H^1(f)$ 
is an isomorphism, by Lemmas \ref{lem:red2} and \ref{lem:tori}. 
By Lemma \ref{lem:tori}, our claim reduces to verifying the equality 
\begin{equation}
\label{eq:flatincl-redbis}
\F(A,\g)_{(0)} = \Phi_f^{*} \F(A_f,\g)_{(0)}.
\end{equation}
Our assumption related to \eqref{eq:repincl-intro} gives the equality 
\begin{equation}
\label{eq:repincl-red1}
\Hom (\pi ,G)_{(1)}= \abf^{*} \Hom (\pi_{\abf}, G)_{(1)} \cup 
f_{\sharp}^{*} \Hom (\pi_f,G)_{(1)}.
\end{equation}
Again by Lemmas \ref{lem:red2} and \ref{lem:tori}, 
formula \eqref{eq:repincl-red1} reduces to
\begin{equation}
\label{eq:repincl-redbis}
\Hom (\pi ,G)_{(1)}= f_{\sharp}^{*} \Hom (\pi_f,G)_{(1)}.
\end{equation}
As seen before, $\F(A,\g)_{(0)} \cong  \Hom (\pi ,G)_{(1)}$. Clearly,
$f_{\sharp}^{*} \Hom (\pi_f,G)_{(1)}\cong  \Hom (\pi_f,G)_{(1)}$. Next, 
$\Hom (\pi_f,G)_{(1)}\cong \F(A_f,\g)_{(0)}$, again by Theorem \ref{thm:b}. 
Finally, $\F(A_f,\g)_{(0)}\cong \Phi_f^{*} \F(A_f,\g)_{(0)}$.

In conclusion, \eqref{eq:repincl-redbis} implies that 
$\F(A,\g)_{(0)} \cong \Phi_f^{*} \F(A_f,\g)_{(0)}$.
Hence, by Lemma \ref{lem:p3}, equality \eqref{eq:flatincl-redbis} 
holds, and we are done.
\end{proof}

\begin{proof}[Proof of Theorem \ref{thm:main}]
The implication \eqref{m1} $\Rightarrow$ \eqref{m4} follows from 
Lemma \ref{lem:main step3} and Corollary \ref{cor:1to2}. 
Implication \eqref{m4} $\Rightarrow$ \eqref{m3} is clear. 
The implication \eqref{m3} $\Rightarrow$ \eqref{m2} follows 
from Lemmas \ref{lem:main step1} and \ref{lem:main step2}. 
Finally, the implication \eqref{m2} $\Rightarrow$ \eqref{m1} is 
obvious. 
\end{proof}

\subsection{Irreducible decompositions} 
\label{subsec:pfirr}

We are now in a position to prove Theorem \ref{thm:irrtran} from 
the Introduction, regarding the decomposition into irreducible components 
of germs of embedded jump loci of a quasi-projective manifold $M$ 
satisfying one of the equivalent properties from Theorem \ref{thm:main}.  

\begin{proof}[Proof of Theorem \ref{thm:irrtran}]
We start with Parts \eqref{t2} and \eqref{t3}. 
The equalities \eqref{eq:repincl-irr}--\eqref{eq:rincl-irr} follow from Theorem \ref{thm:main}.
We also know from Lemmas \ref{lem:flatirr} and \ref{lem:moreirr} that all subgerms appearing 
in these unions are irreducible.  If any one of these unions has redundancies, then, in view 
of the results from \S\ref{subsec:red}, either 
\eqref{eq:red1} or \eqref{eq:red2} holds.
In Part \eqref{t2}, this violates our assumption on first Betti 
numbers, by Lemmas \ref{lem:red2} and \ref{lem:red1}. 
In Part \eqref{t3}, we have to verify equalities 
\eqref{eq:repincl-spec}--\eqref{eq:rincl-spec}:  these follow 
at once from \eqref{eq:repincl-irr}--\eqref{eq:rincl-irr} and Lemma \ref{lem:tori}.

To prove Part \eqref{t1}, we  start by examining the irreducible decomposition 
\eqref{eq:flatincl-irr}. By Theorem \ref{thm:nonabtrans}, all components different 
from $\F^1(A, \g)_{(0)}$ intersect pairwise in a single point.
Next, we claim that the irreducible decomposition \eqref{eq:repincl-irr} has the 
following property: all components different from $\abf^{*} \Hom (\pi_{\abf}, G)_{(1)}$ 
have positive-dimensional intersection with $\abf^{*} \Hom (\pi_{\abf}, G)_{(1)}$. 
Indeed, such an intersection is isomorphic to $\Hom ((\pi_f)_{\abf}, G)_{(1)}$, by
Lemma \ref{lem:nonred2}. Since $(\pi_f)_{\abf} \cong \Z^n$ with $n\ge 1$, 
Lemma \ref{lem:abf} gives the isomorphism 
$\Hom ((\pi_f)_{\abf}, G)_{(1)} \cong \F^1 (A_0, \g)_{(0)}$. 
On the other hand, the homogeneous variety $\F^1 (A_0, \g)$ is isomorphic to the cone 
on the product of projective spaces $\PP^{n-1}  \times \PP (\g)$, 
which implies that its germ at $0$ is positive-dimensional. 

By Theorem \ref{thm:b}, the germs $\Hom (\pi, G)_{(1)}$ and $\F (A, \g)_{(0)}$ are isomorphic.
Clearly, in Part \eqref{t1} we may suppose that $\cE (M)$ has at least two elements. 
With this assumption, we infer from Part \eqref{t2} and the above discussion that the 
isomorphism identifies the components $\abf^{*} \Hom (\pi_{\abf}, G)_{(1)}$ and 
$\F^1(A, \g)_{(0)}$. Indeed, the isomorphism identifies the components of 
$\Hom (\pi, G)_{(1)}$ with those of $\F (A, \g)_{(0)}$, modulo a permutation 
$\tau$ of the index set $E(M)$. Assume that $\F^1(A, \g)_{(0)}$ is identified 
with $\Hom (\pi_{f'}, G)_{(1)}$, for some $f' \in \cE(M)$, and pick an element 
$g' \in \cE(M)$ different from $f'$. By the above property of the irreducible 
decomposition \eqref{eq:flatincl-irr}, $\abf^{*} \Hom (\pi_{\abf}, G)_{(1)}$
must intersect $\Hom (\pi_{g'}, G)_{(1)}$ in a single point. On the other hand, 
the above property of the irreducible decomposition \eqref{eq:repincl-irr} implies 
that this intersection is positive-dimensional. This contradiction proves that 
$\tau (f_0)=f_0$, as claimed.  Our assertion in Part \eqref{t1} follows then from 
Theorem \ref{thm:nonabtrans}. 
\end{proof}

\begin{remark}
\label{rk:known}
We point out that all irreducible components appearing in Theorem \ref{thm:irrtran}
are known, for any quasi-projective manifold $M$ with $b_1(M)>0$
and any rational representation of $G=\SL_2$ or $\Sol_2$.
Indeed, Lemmas \ref{lem:flatirr} and \ref{lem:moreirr} provide isomorphisms of germs, 
$f_{\sharp}^{*} \Hom (\pi_f,G)_{(1)} \cong \Phi_f^{*} \F(A_f,\g)_{(0)} 
\cong \F(H^{\hdot}(M_f), \g)_{(0)}$, for any $f\in \cE (M)$, as well as isomorphisms 
$\abf^{*} \Hom (\pi_{\abf},G)_{(1)} \cong \F^1(A,\g)_{(0)}$ and 
$\abf^{*} \VV^1_1 (\pi_{\abf},\iota)_{(1)} \cong \Pi(A,\theta)_{(0)}$.
Finally, the affine varieties $\F(H^{\hdot}(M_f), \g)$ and $\F^1(A,\g)$, $\Pi(A,\theta)$ are
described in \cite[Lemmas 7.3 and  3.3]{MPPS}.
\end{remark}

\begin{remark}
\label{rem:irrglobal}
The equivalent properties from Theorem \ref{thm:main} also imply global equalities 
in \eqref{eq:flatincl-irr} and \eqref{eq:rincl-irr}, by Lemmas \ref{lem:p1} and \ref{lem:p2}. 
When all Betti numbers $b_1(M_f)$ 
are different from $b_1(M)$, these equalities are in fact global irreducible decompositions, 
by Theorem \ref{thm:irrtran}\eqref{t2} and Lemma \ref{lem:p2}. When 
$b_1(M_f)=b_1(M)$ for some $f\in \cE(M)$, the local equalities \eqref{eq:flatincl-spec} 
and \eqref{eq:rincl-spec} are actually global equalities of irreducible varieties, 
by a similar argument.
\end{remark}

\begin{remark}
\label{rem:shimura}
Building on the seminal work of Corlette and 
Simpson \cite{CS}, Loray, Pereira, and Touzet establish  
in \cite[Cor.~B]{LPT} the following striking  result.  
Let $M$ be a quasi-projective manifold, and let 
$\rho\colon \pi_1(M)\to \SL_2(\C)$ be a representation which is not 
virtually abelian.  There is then an orbifold morphism, $f\colon M\to N$, such that 
the associated representation, $\tilde\rho \colon \pi_1(M)\to  \PSL_2(\C)$, factors  
through the induced homomorphism $f_{\sharp}\colon \pi_1(M)\to \pi_1^{\orb}(N)$, where $N$ 
is either a $1$-dimensional complex orbifold, or a polydisk Shimura modular orbifold. 

The equality from  Theorem \ref{thm:irrtran}, display \eqref{eq:repincl-irr}  
provides a simpler, more precise local classification: If the representation $\rho$ is 
sufficiently close to the origin, then either $\rho$ is abelian, or there 
is an admissible map $f\colon M\to C$ such that $\rho$ factors through 
the homomorphism $f_{\sharp}\colon \pi_1(M)\to \pi_1(C)$, where $C$ 
is a smooth curve with $\chi(C)<0$.
\end{remark}

\begin{example}
\label{ex:case2}
Let $M$ be the product $\Sigma_g \times N$, where $\Sigma_g$ 
is a projective curve of 
genus $g>1$ and $N$ is a projective manifold with $b_1(N)=0$. This simple example 
shows that the  case from Theorem \ref{thm:irrtran}\eqref{t3} really does occur.
Indeed, it is clear that the canonical projection, $f\colon M\to \Sigma_g=M_f$, 
gives an element $f\in \cE(M)$ with $b_1(M_f)=b_1(M)$.
\end{example}

\subsection{On the structure of rank $2$ jump loci}
\label{subsec:top}

The results we have obtained so far enable us to derive structural 
decompositions near $1$ of the non-abelian rank $2$ 
topological embedded jump loci in low degree, for several large 
classes of quasi-projective manifolds.  These structural decompositions 
are summarized in Theorem \ref{thm:loci} from the Introduction, 
which we now proceed to prove.  

\begin{proof}[Proof of Theorem \ref{thm:loci}]
Let $M$ be a quasi-projective manifold; as explained in \S\ref{subsec:cjl-qk}, 
we may suppose that $b_1(M)>0$. Fix a convenient 
compactification $\oM=M\cup D$, and let $(A^{\hdot},d)=\OS^{\hdot}(\oM,D)$ 
be the corresponding model for $M$. 

We need to verify that equalities \eqref{eq:repincl-irr} and \eqref{eq:vincl-irr} 
hold in the five cases from our list. 
By Theorem \ref{thm:main}, it is enough to check 
that the infinitesimal inclusion \eqref{eq:flatincl-intro} is an equality 
in each case, that is, we need to verify that 
\begin{equation}
\label{eq:flat-equal}
\F(A,\g) = \F^1(A,\g)\cup 
\bigcup_{f\in \mathcal{E}(M)}  \Phi_f^{*} \F(A_f,\g),
\end{equation}
for each of the corresponding Orlik--Solomon models. 

\eqref{ai1}  First suppose that $M$ is projective.  In this case, 
$\OS^{\hdot}(M,  \O)=(H^{\hdot}(M), d=0)$.  In 
particular, $M$ is a formal space and $\pi$ is a $1$-formal group, 
and similarly for each curve $M_f$. It follows from \cite[Cor.~7.2]{MPPS} 
that \eqref{eq:flat-equal} holds. 

\eqref{ai2}  Next, suppose that the Deligne weight filtration has the 
property that $W_1H^1(M)=0$.  In this case, equality \eqref{eq:flat-equal}  
holds by \cite[Thm.~4.2]{BMPP}. 

\eqref{ai3} Now suppose that $M$ is the partial configuration space 
of a projective curve associated to a finite simple graph.
Then the needed equality is established  in \cite[Thm.~1.3]{BMPP}. 

\eqref{ai4}  Next, suppose that $\RR^1_1(H^{\hdot}(M), d=0)= \{ 0\}$.
Then  \eqref{eq:flat-equal} holds by  \cite[Cor.~7.7]{MPPS}.

\eqref{ai5} Finally, suppose that $M=S \setminus \{ 0\}$, where $S$ is 
a quasi-homogeneous affine surface 
having a normal, isolated singularity at $0$. 
Equality \eqref{eq:flat-equal} is then proved in displays 
(35) and (36) from \cite[Thm.~9.6]{PS-15}.
\end{proof}

\subsection{Transversality for K\"{a}hler manifolds and hyperplane arrangements}
\label{subsec:trans-bis}

When $M$ is either a compact, connected K\"{a}hler manifold 
or the complement of a central complex hyperplane arrangement,
the local analytic equalities \eqref{eq:repincl-irr} and \eqref{eq:vincl-irr} 
were obtained in  \cite[Thm.~1.3]{PS-16}, by a completely 
different approach.  (In the compact K\"{a}hler case, a map 
$f\colon M\to M_f$ is admissible if it is a holomorphic surjection 
with connected fibers onto a compact Riemann surface;  the 
finite set $E(M)$ is defined as before.) 

The method used in \cite{PS-16} is based on the fact that the family 
of maps $E(M)$ has the uniform formality property (in the sense of 
Definitions 3.2 and 6.3 from \cite{PS-16}), in the above two cases: 
this is proved in \cite[Prop.~7.4]{PS-16} for compact K\"{a}hler manifolds, 
respectively in  \cite[Prop.~9.3]{PS-16} for the arrangement case. 
This means that, for all $f\in E(M)$, there 
are zig-zags of augmentation-preserving quasi-isomorphisms 
connecting the Sullivan algebras of $M$ and $M_f$ to the 
respective cohomology algebras, as well as augmented $\dga$ maps $\Phi_i$ 
making the following ladder commute, up to augmented homotopy of $\dga$ maps,
\begin{equation}
\label{eq:ziggyf}
\begin{gathered}
\xymatrix{
\Omega(M)  & A_1 \ar_(.45){\psi_0}[l]  \ar^{\psi_1}[r] & \cdots 
& A_{\ell-1}   \ar[l]\ar^{\psi_{\ell-1}}[r] & H^*(M) \, \phantom{.}
\\
\Omega(M_f) \ar^{\Omega(f)}[u] & A'_1  
\ar^{\Phi_1}[u] \ar_(.45){\psi'_0}[l]  \ar^{\psi'_1}[r] & \cdots 
& A'_{\ell-1}   \ar_{\Phi_{\ell-1}}[u] \ar[l]\ar^{\psi'_{\ell-1}}[r] & H^*(M_f) \, , \ar_{f^*}[u]
}
\end{gathered}
\end{equation}
with the property that the isomorphism induced by the top zig-zag 
on deformation functors (i.e., the appropriate moduli spaces of flat 
connections) is independent of $f$. 

Using this uniform formality property, we obtain the following topological 
analog of the transversality property from Theorem \ref{thm:nonabtrans}, which 
proves Theorem \ref{thm:tran}, Part \eqref{fg2} from the Introduction.

\begin{theorem}
\label{thm:toptrans}
Let $M$ be either a compact, connected K\"{a}hler manifold or the 
complement of a central complex hyperplane arrangement.
Let $G$ be a linear algebraic group. Then
\[
f_{\sharp}^{*} \Hom (\pi_f,G)_{(1)} \cap g_{\sharp}^{*} \Hom (\pi_g,G)_{(1)} = \{ 1\},
\]
for any two distinct maps $f, g\in \cE(M)$.
\end{theorem}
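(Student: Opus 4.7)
The plan is to reduce this topological transversality statement to an infinitesimal one by exploiting the \emph{uniform formality} of the family $E(M)$, which holds in both settings thanks to \cite[Prop.~7.4]{PS-16} (K\"{a}hler case) and \cite[Prop.~9.3]{PS-16} (arrangement case). The ladder of quasi-isomorphisms \eqref{eq:ziggyf} connects each Sullivan model $\Omega(M_f)$ to the cohomology algebra $(H^{\hdot}(M_f), d=0)$ through augmented $\dga$ maps that intertwine $\Omega(f)$ and $f^{*}$, and the uniformity says that the resulting identifications of deformation functors are compatible across all $f\in E(M)$ simultaneously.

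First, I would combine this zig-zag with Theorem \ref{thm:b} to obtain, for each $f\in \cE(M)$, an analytic germ isomorphism
\[
\Hom(\pi,G)_{(1)} \isom \F(H^{\hdot}(M),\g)_{(0)}
\]
sending $f_{\sharp}^{*}\Hom(\pi_f,G)_{(1)}$ to $(f^{*})^{*}\F(H^{\hdot}(M_f),\g)_{(0)}$, where $(f^{*})^{*}=f^{*}\otimes \id_{\g}$ is the map induced by $f^{*}\colon H^{\hdot}(M_f)\to H^{\hdot}(M)$. The essential content of uniform formality is that a \emph{single} such isomorphism can be chosen to identify both $f_{\sharp}^{*}\Hom(\pi_f,G)_{(1)}$ and $g_{\sharp}^{*}\Hom(\pi_g,G)_{(1)}$ with the corresponding infinitesimal images, so the intersection in question is transported to
\[
(f^{*})^{*}\F(H^{\hdot}(M_f),\g)_{(0)} \cap (g^{*})^{*}\F(H^{\hdot}(M_g),\g)_{(0)}
\]
inside $\F(H^{\hdot}(M),\g)_{(0)}$.

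Next, since each $M_f$ is a smooth curve, the $\dga$ $H^{\hdot}(M_f)$ carries zero differential and is concentrated in degrees $\le 2$; hence $\F(H^{\hdot}(M_f),\g)\subseteq H^1(M_f)\otimes \g$, and similarly for $g$. Consequently the infinitesimal intersection is contained in $(\im H^1(f) \cap \im H^1(g)) \otimes \g$. By Theorem \ref{thm:arapura}, $\im H^1(f)$ coincides with the tangent space at $1$ to the positive-dimensional component $f_{\sharp}^{*}\Hom(\pi_f,\C^{\times})$ of $\VV^1_1(M)$, and the rank-one transversality result \cite[Thm.~C(2)]{DPS-duke} gives
\[
T_1\bigl(f_{\sharp}^{*}\Hom(\pi_f,\C^{\times})\bigr) \cap T_1\bigl(g_{\sharp}^{*}\Hom(\pi_g,\C^{\times})\bigr) = \{0\}.
\]
Thus $\im H^1(f) \cap \im H^1(g)=\{0\}$, so the infinitesimal intersection reduces to $\{0\}$, and unwinding the germ isomorphism yields the claimed equality with $\{1\}$.

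The main obstacle will be verifying that the compatibility afforded by \eqref{eq:ziggyf} actually produces a \emph{single} identification of embedded germs under which both $f_{\sharp}^{*}\Hom(\pi_f,G)_{(1)}$ and $g_{\sharp}^{*}\Hom(\pi_g,G)_{(1)}$ become infinitesimal subgerms—and not merely an identification for each individual map. This is exactly the deformation-functor content packaged in Definitions 3.2 and 6.3 of \cite{PS-16}, so the work is in interpreting the uniform formality diagrams at the level of embedded germs, while carefully tracking base points and naturality of the correspondence between $\dga$ morphisms and maps of representation varieties.
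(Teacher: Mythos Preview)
Your proposal is correct and follows essentially the same route as the paper: use uniform formality of the family $E(M)$ to transport the intersection of germs to the infinitesimal side, and then conclude via $\im H^1(f)\cap \im H^1(g)=\{0\}$ from \cite[Thm.~C(2)]{DPS-duke}. The paper makes your ``main obstacle'' step precise by directly invoking \cite[Thm.~6.4]{PS-16} (which is exactly the single compatible identification of embedded germs you anticipate), and it also prefaces the arrangement case with a standard slicing reduction to $\C^3$ before applying \cite[Prop.~9.3]{PS-16}.
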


\begin{proof}
In the arrangement case, we may suppose by a standard slicing argument that the 
hyperplanes lie in $\C^3$, since our claim depends only on the fundamental group 
$\pi=\pi_1(M)$. In both cases, we may choose a basepoint in $M$, and assume that
all elements of $E(M)$ are represented by pointed maps.

For a map $f\in E(M)$, consider the $\dga$ map 
$H^{\hdot}(f)\colon (H^{\hdot}(M_f),d=0) \to (H^{\hdot}(M),d=0)$, 
denoted $\Phi_f \colon A_f \to A$.   We want to apply \cite[Theorem 6.4]{PS-16} 
to the finite families $\{ f\}$ and $\{ \Phi_f\}$, for $q=1$. 
Clearly, all spaces and all $\dga$s appearing in these families are finite objects. 
Since each $f_{\sharp}$ is an epimorphism, both $f$ and $\Phi_f$ are $0$-connected 
maps. Denote by $\Omega (f)$ the $\dga$ map induced by $f$ between
Sullivan de Rham algebras. As mentioned before, $\Omega (f) \simeq \Phi_f$ in 
the category of augmented $\dga$s, uniformly with respect to $f\in E(M)$. 

Theorem 6.4 from \cite{PS-16} provides then a local analytic isomorphism, 
$\Hom (\pi,G)_{(1)} \cong \F(A,\g)_{(0)}$, that identifies 
$f_{\sharp}^{*} \Hom (\pi_f,G)_{(1)}$ with $\Phi_f^{*} \F(A_f,\g)_{(0)}$, 
for all $f\in \cE(M)$. Thus, our assertion will follow from the global 
transversality property 
\begin{equation}
\label{eq:inftran-appl}
\Phi_f^{*} \F(A_f,\g) \cap \Phi_g^{*} \F(A_g,\g)= \{ 0\}.
\end{equation}

In our situation, a stronger transversality holds, namely
\begin{equation}
\label{eq:inftran-strong}
\Phi_f^{*} A^1_f \otimes \g \cap \Phi_g^{*} A^1_g \otimes \g= \{ 0\}.
\end{equation}
Indeed, property \eqref{eq:inftran-strong} becomes
\begin{equation}
\label{eq:inftran-formal}
\im H^1(f) \otimes \g \cap \im H^1(g) \otimes \g= \{ 0\},
\end{equation}
by the construction of $\Phi$. On the other hand,
\begin{equation}
\label{eq:inftran-rk1}
\im H^1(f) \cap \im H^1(g) = \{ 0\},
\end{equation}
by the argument from the proof of  Theorem \ref{thm:nonabtrans}, which 
also works for quasi-K\"{a}hler manifolds. Thus, equality \eqref{eq:inftran-formal} 
holds, and this completes our proof.
\end{proof}

It is proved in Theorem 4.2 from \cite{DPS-imrn} that all pairs of distinct 
irreducible components of $\VV^1_1(M)$ intersect in a finite set, for any 
quasi-projective manifold $M$. In light of the bijection from Theorem \ref{thm:arapura}, 
Theorem \ref{thm:toptrans} may be viewed as a non-abelian analog of this rank $1$ result. 

\section{Rank greater than $2$}
\label{sect:rank3}

In this section, we consider in more detail the case when $M$ is a punctured 
quasi-homogeneous, isolated surface singularity, as in Theorem \ref{thm:loci}, 
Part \eqref{ai5}. 
For the group $\pi=\pi_1(M)$, we will examine the natural inclusion
\begin{equation}
\label{eq:repincl-rk3}
\Hom(\pi,G)_{(1)} \supseteq  \abf^{*} \Hom (\pi_{\abf},G)_{(1)}  \cup 
\bigcup_{f\in \cE(M)} f_{\sharp}^{*} \Hom (\pi_f,G)_{(1)}, 
\end{equation}
where $G=\SL_n(\C)$ with $n\ge 3$. 
We begin by recalling from \cite[\S{9}]{PS-15} several relevant facts. 

Since $M$ is a quasi-homogeneous variety, there is a 
positive weight $\C^{\times}$-action on $M$ with finite isotropy groups. 
The orbit space $M/\C^{\times}$ is a smooth projective curve $\Sigma_g$, 
where $g=b_1(M)/2$. Thus, our standard assumption that $b_1(M)>0$ 
translates to $g>0$.  

It is readily seen that the canonical projection, $f\colon M \to M/\C^{\times}=M_f$, 
is an admissible map.  Furthermore, 
\begin{equation}
\label{eq:emo}
\cE(M)= \begin{cases}
\O & \text{if $g=1$,}\\
\{ f\} & \text{if $g>1$.}
\end{cases}
\end{equation}

Set $H^{\hdot}=(H^{\hdot}(\Sigma_g), d=0)$. 
Define a $\dga$ $(A,d)$ by $A^{\hdot}= H^{\hdot}\otimes \bigwedge (t)$, 
with $t$ of degree $1$, where $d=0$ on $H^{\hdot}$ and $dt=\omega$, where
$\omega \in H^2$ is the orientation class. Then $A$ (respectively $H$) is 
a finite model of $M$ (respectively $M_f$). 

\begin{theorem}
\label{thm:rank3}
Let $M=S \setminus \{ 0\}$, where $S$ is a quasi-homogeneous affine surface 
having a normal, isolated singularity at $0$.  If $b_1(M)>0$ and $G=\SL_n(\C)$ 
with $n\ge 3$, then inclusion \eqref{eq:repincl-rk3} is strict. 
\end{theorem}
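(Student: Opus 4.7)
The plan is to translate the topological problem to an infinitesimal one via Theorem \ref{thm:b}, compute $\F(A,\sl_n)$ explicitly from the model $A=H^{\hdot}(\Sigma_g)\otimes\bwedge(t)$ with $dt=\omega$, and then exhibit a flat connection whose $t$-component is nonzero. Such a connection exists exactly because $n\ge 3$, and it produces a new irreducible component of $\F(A,\sl_n)$ that cannot lie in the infinitesimal analog of the right-hand side of \eqref{eq:repincl-rk3}.

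Writing $\Omega=\eta+t\otimes X\in A^1\otimes\sl_n$ with $\eta=\sum_i(a_i\otimes Y_i+b_i\otimes Z_i)$ in a symplectic basis of $H^1(\Sigma_g)$, the weight-graded Maurer--Cartan equation (as in the proof of Lemma \ref{lem:oscurve}) is equivalent to the two conditions
\[
X+\sum_i[Y_i,Z_i]=0,\qquad [X,Y_i]=[X,Z_i]=0\text{ for all }i.
\]
Equivalently, by \eqref{eq:holoflat}, $\F(A,\sl_n)\cong\Hom_{\Lie}(\h(A),\sl_n)$, where $\h(A)$ is a central extension $0\to\C c\to\h(A)\to\h(A_f)\to 0$ with $c$ identified with $-\sum_i[x_i,y_i]$. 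In this description $\F^1(A,\sl_n)\cup\Phi_f^{*}\F(A_f,\sl_n)$ is precisely the slice $\{X=0\}$, i.e.\ the locus of homomorphisms that factor through $\h(A_f)$. I then produce an element off that slice: take $Y_1=E_{13}$, $Z_1=-E_{32}$, all other $Y_i,Z_i=0$, so that $X=E_{12}$; direct matrix computation confirms $E_{13},E_{32}\in C_{\sl_n}(E_{12})$, so both constraints hold. Since the setup is invariant under the positive-weight $\C^{\times}$-action of Lemma \ref{lem:p2}, rescaling by $s\in\C$ yields a full arc of such connections through $0$, and its closure is an irreducible component $\F'$ of $\F(A,\sl_n)$ genuinely distinct from $\Phi_f^{*}\F(A_f,\sl_n)$.

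To transfer this to the topological side, Theorem \ref{thm:b} gives $\Hom(\pi,\SL_n)_{(1)}\cong\F(A,\sl_n)_{(0)}$; by Lemma \ref{lem:flatirr} and Theorem \ref{thm:b} applied to the formal model $A_0=\bwedge H^1(M)$ of $K(\pi_{\abf},1)=T^{2g}$, one has germ isomorphisms $f_{\sharp}^{*}\Hom(\pi_f,\SL_n)_{(1)}\cong\F(A_f,\sl_n)_{(0)}$ and $\abf^{*}\Hom(\pi_{\abf},\SL_n)_{(1)}\cong\F(A_0,\sl_n)_{(0)}$. Assume for contradiction that \eqref{eq:repincl-rk3} is an equality near $1$. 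Then $\F(A,\sl_n)_{(0)}$ would be a union of two subgerms $X_{\abf},X_f$ abstractly isomorphic to the irreducible germs $\F(A_0,\sl_n)_{(0)}$ (principal component of commuting $2g$-tuples, by Richardson) and $\F(A_f,\sl_n)_{(0)}$ (irreducible by \cite[Lem.~7.3]{MPPS}); for $g=1$, the second subgerm is absent since $\cE(M)=\emptyset$. A dimension count comparing $\dim\F(A_0,\sl_n)_{(0)}=(n-1)(n+2g)$, $\dim\F(A_f,\sl_n)_{(0)}$, and the dimension of $\F'$ computed from $\dim C_{\sl_n}(E_{12})=(n-1)^2$ and its derived subalgebra shows that the irreducible component $\F'$ cannot be contained in either irreducible subgerm of the prescribed abstract dimension, contradicting the assumed equality.

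The hard part is this last step: the isomorphism of Theorem \ref{thm:b} is not natural for the non-abelian Lie algebra $\sl_n$, so the images of $\abf^{*}$ and $f_{\sharp}^{*}$ under the germ isomorphism are not a priori the explicit subvarieties $\F^1$ and $\Phi_f^{*}\F(A_f,\sl_n)$. One must replace the naturality used in the proof of Theorem \ref{thm:main} by the combination of \emph{irreducibility} of the two abstract target germs with the dimension gap between $\F'$ and the smaller of them, and check case-by-case (according to whether $g=1$ or $g\ge 2$) that no union of two irreducible subgerms of the allowed dimensions can absorb $\F'$.
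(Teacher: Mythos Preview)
Your infinitesimal translation and the explicit flat connection are correct and essentially match the paper: writing the Maurer--Cartan equations in the model $A=H^{\hdot}(\Sigma_g)\otimes\bwedge(t)$, identifying $\varphi^{*}\F(H,\sl_n)$ with the slice $\{X=0\}$, and exhibiting (for $n\ge 3$) a point with $X=E_{12}\ne 0$ is exactly what the paper does, phrased in the language of holonomy Lie algebras and root vectors.

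The gap is in your final transfer step. Two specific problems. First, the appeal to ``Richardson'' to conclude that $\F(A_0,\sl_n)_{(0)}$ is irreducible is valid only for $g=1$; Richardson's theorem concerns commuting \emph{pairs}, and for $2g\ge 4$ commuting matrices the variety is in general reducible, so $X_{\abf}$ need not be an irreducible germ. Second, and more seriously, the promised dimension comparison is never carried out: you do not compute $\dim\F'$, and it is not at all clear that $\dim\F'$ exceeds $\dim X_f=(2g-1)(n^2-1)$, which is what the argument would require. Without that inequality, nothing prevents the abstract subgerm $X_f\cong\F(H,\sl_n)_{(0)}$ from swallowing $\F'$.

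The paper sidesteps all of this by a much simpler device. For $g\ge 2$ one has $b_1(M_f)=b_1(M)$, so Lemma~\ref{lem:tori}\eqref{tor4} gives $\abf^{*}\Hom(\pi_{\abf},G)_{(1)}\subseteq f_{\sharp}^{*}\Hom(\pi_f,G)_{(1)}$; for $g=1$ one has $A_0\cong H$. In either case the right-hand side of \eqref{eq:repincl-rk3} collapses to a \emph{single} germ abstractly isomorphic to $\varphi^{*}\F(H,\sl_n)_{(0)}$. Equality in \eqref{eq:repincl-rk3} would then give an abstract isomorphism $\F(A,\sl_n)_{(0)}\cong\varphi^{*}\F(H,\sl_n)_{(0)}$, and since the latter is already a \emph{subgerm} of the former, the Hopfian property of Noetherian local rings (Lemma~\ref{lem:p3}) forces $\F(A,\sl_n)_{(0)}=\varphi^{*}\F(H,\sl_n)_{(0)}=\{X=0\}_{(0)}$. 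Your explicit connection with $X\ne 0$ (scaled to lie near $0$) then gives the contradiction, with no dimension count and no irreducibility of commuting $2g$-tuples needed.
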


\begin{proof}
Assuming the contrary, we infer for $g>1$ that 
\begin{equation}
\label{eq:repincl-g2}
\Hom(\pi,G)_{(1)} = f_{\sharp}^{*} \Hom (\pi_f,G)_{(1)}.
\end{equation}
Indeed, in this case equality in \eqref{eq:repincl-rk3} becomes
\begin{equation}
\label{eq:hpi1}
\Hom(\pi,G)_{(1)} = \abf^{*} \Hom (\pi_{\abf},G)_{(1)}  \cup f_{\sharp}^{*} \Hom (\pi_f,G)_{(1)},
\end{equation}
since $\cE(M)=\{ f\}$. On the other hand, 
\begin{equation}
\label{eq:hpi2}
\abf^{*} \Hom (\pi_{\abf},G)_{(1)} \subseteq f_{\sharp}^{*} \Hom (\pi_f,G)_{(1)},
\end{equation}
by Lemma \ref{lem:tori}. 

If $g=1$, equality in \eqref{eq:repincl-rk3} becomes
\begin{equation}
\label{eq:repincl-g1}
\Hom(\pi,G)_{(1)} = \abf^{*} \Hom (\pi_{\abf},G)_{(1)},
\end{equation}
since $\cE(M)= \O$.

We will show that both \eqref{eq:repincl-g2} and \eqref{eq:repincl-g1} 
lead to a contradiction. We denote by $\varphi\colon H\inj A$ the 
canonical $\dga$ inclusion. Note that both $H$ and $A$ are
$\dga$s with positive weights, preserved by the map $\varphi$; 
see \cite[Prop.~9.1]{PS-15}.

First, we claim that equality \eqref{eq:repincl-g2} implies that 
\begin{equation}
\label{eq:flatincl-equal}
\F(A, \g)= \varphi^{*} \F(H, \g).
\end{equation}
To verify this claim, let us note that, by Lemmas \ref{lem:p1}--\ref{lem:p2}, it is 
enough to construct a local analytic isomorphism,
\begin{equation}
\label{eq:flatincl-cong}
\F(A, \g)_{(0)} \cong  \varphi^{*} \F(H, \g)_{(0)}.
\end{equation}
In turn, such an isomorphism is obtained as follows. First, 
$\F(A, \g)_{(0)} \cong  \Hom(\pi,G)_{(1)}$, by Theorem \ref{thm:b}.
Next, $\Hom(\pi,G)_{(1)} = f_{\sharp}^{*} \Hom (\pi_f,G)_{(1)}$, by  \eqref{eq:repincl-g2}. 
On the other hand, we clearly have that 
$f_{\sharp}^{*} \Hom (\pi_f,G)_{(1)} \cong \Hom (\pi_f,G)_{(1)}$ and 
$\varphi^{*} \F(H, \g)_{(0)}\cong \F(H, \g)_{(0)}$. Finally, 
$\Hom (\pi_f,G)_{(1)}\cong \F(H, \g)_{(0)}$,
again by Theorem \ref{thm:b}.  Thus, our claim is established.

Now, we claim that \eqref{eq:repincl-g1} also implies equality \eqref{eq:flatincl-equal}.
By the previous argument, it is enough to construct the isomorphism \eqref{eq:flatincl-cong}.
As before, $\F(A, \g)_{(0)} \cong  \Hom(\pi,G)_{(1)}$. 
Next, $\Hom(\pi,G)_{(1)} = \abf^{*} \Hom (\pi_{\abf},G)_{(1)}$, by  \eqref{eq:repincl-g1}. 
Plainly, $\abf^{*} \Hom (\pi_{\abf},G)_{(1)}\cong \Hom (\pi_{\abf},G)_{(1)}$ and 
$\varphi^{*} \F(H, \g)_{(0)}\cong \F(H, \g)_{(0)}$. 
Set $A_0= (\bigwedge^{\hdot} H^1(M), d=0)$. 
We infer from Lemma \ref{lem:abf} and Theorem \ref{thm:b} that 
$\Hom (\pi_{\abf},G)_{(1)}\cong \F(A_0, \g)_{(0)}$. 
Finally, the $\dga$s $A_0$ and $H$ are isomorphic, since $b_1(M)=b_1(M_f)$ and $g=1$. 

In conclusion, equality in \eqref{eq:repincl-rk3} implies \eqref{eq:flatincl-equal}, 
in all cases.

It will be convenient to rephrase equality \eqref{eq:flatincl-equal} in terms of the 
holonomy Lie algebra  $\h(A)$ described in \S\ref{subsec:flat}.  In view of the 
isomorphism \eqref{eq:holoflat}, the equality \eqref{eq:flatincl-equal} holds  
if and only if the natural morphism,
\begin{equation}
\label{eq:flatincl-hol}
\xymatrixcolsep{20pt}
\xymatrix{\h(\varphi)^{*} \colon \Hom_{\Lie}(\h(H),\g) \ar[r] &\Hom_{\Lie}(\h(A),\g)}, 
\end{equation}
is surjective.

Let $a_1,b_1,\dots,a_g,b_g$ be the dual of a symplectic basis of $H^1$, 
and let $\L$ be the free Lie algebra with this generating set. 
Write $r=\sum_{i=1}^g [a_i,b_i]$.
It is straightforward to check that $\h(H)$ is the quotient of $\L$ 
by the ideal generated by $r$, while $\h(A)$ is the quotient of $\L$ 
by the ideal generated by $[a_i,r]$ and $[b_i,r]$, for $i=1,\dots, g$. 
Moreover, the Lie morphism $\h(\varphi)\colon \h(A) \to \h(H)$ 
is the identity on free generators.  To disprove surjectivity 
in \eqref{eq:flatincl-hol}, we have to construct a Lie algebra map  
$\rho\colon \h(A) \to \g$ which does not factor through $\h(H)$. 

To achieve this goal, we first need to recall from \cite{Hu72} a couple of classical 
facts from the structure theory of semisimple Lie algebras. The elements of the 
root system $R$ of $\g=\sl_n$ are $t_{ij}:=t_i-t_j$, $1\le i\ne j \le n$, 
where $t_i$ denotes the $i$th projection of the Cartan subalgebra consisting of 
the diagonal matrices in $\sl_n$. For each such root, the corresponding $1$-dimensional 
root space $\g_{ij}$ is of the form $\C \cdot X_{ij}$, for some $X_{ij}\in \g$. It is known 
that $[X_{ij},X_{kl}]=0$ if $0\ne t_{ij}+t_{kl} \not\in R$, and 
$[X_{ij},X_{kl}]=c\cdot X_{i'j'}$ for some $c\in \C^{\times}$ if $t_{ij}+t_{kl}=t_{i'j'} \in R$.

Assuming that $n\ge 3$, we may now define the morphism $\rho\colon \h(A) \to \g$ 
by sending  the free Lie generators 
to $\rho(a_1)=X_{12}$, $\rho(b_1)=X_{23}$, and $\rho(a_i)=\rho(b_i)=0$ for $1<i\le g$. 
By the above discussion, $\rho(r)=c\cdot X_{13}$, for some $c\in \C^{\times}$. Since clearly 
$0\ne t_{12}+t_{13} \not\in R$ and $0\ne t_{23}+t_{13} \not\in R$, we have that 
$\rho ([a_i,r])=\rho ([b_i,r])=0$, for $i=1,\dots, g$. Hence, $\rho \in \Hom_{\Lie}(\h(A),\g)$. 
Plainly, the map $\rho$ does not factor through $\h(H)$, since $\rho(r)\ne 0$.  
This completes the proof.
\end{proof}

\section{Depth greater than $1$}
\label{sect:dep}

Let $M$ be a quasi-K\"{a}hler manifold, and 
let $\iota\colon G\to \GL(V)$ be a rational representation 
of a $\C$-linear algebraic group $G$.  By Lemma \ref{lem:cvnat}, 
we have for each $r\ge 0$ an inclusion of affine varieties,
\begin{equation}
\label{eq:vincldeg1}
\VV^1_r(\pi,\iota) \supseteq \bigcup_{f\in E(M)} f_{\sharp}^{*} \VV^1_r (\pi_f,\iota).
\end{equation}

For each $f\in E(M)$, we may view the induced homomorphism in cohomology,  
$\Phi_f :=H^{\hdot}(f) \colon H^{\hdot}(M_f) \to H^{\hdot}(M)$,  
as a map of $\dga$s with zero differentials.  Let
$\theta := d_1(\iota)\colon \g \to \gl (V)$ be the tangential Lie 
algebra representation.  By Lemma \ref{lem:rnat}, 
for each $r\ge 0$ we have  an inclusion of affine varieties,
\begin{equation}
\label{eq:rincldeg1}
\RR^1_r(H^{\hdot}(M),\theta) \supseteq 
\bigcup_{f\in E(M)} \Phi_f^{*} \RR^1_r (H^{\hdot}(M_f),\theta).
\end{equation}

\begin{lemma}
\label{lem:karr}
If $M$ is either a connected, compact K\"{a}hler manifold or the complement of a 
central complex hyperplane arrangement,  then the inclusion \eqref{eq:vincldeg1} 
becomes an equality near $1$ if and only if the inclusion \eqref{eq:rincldeg1} 
is an equality.
\end{lemma}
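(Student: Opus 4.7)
The plan is to pass between the topological and infinitesimal sides via the uniform formality property of the family $E(M)$, and then convert local equality at the basepoint into global equality by a positive-weights argument. The argument will proceed in three steps.

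First, I would recall that in either of the two situations both $M$ and each curve $M_f$ are formal spaces, so that $(H^{\hdot}(M), d = 0)$ and each $(H^{\hdot}(M_f), d = 0)$ is a finite model, and $H^{\hdot}(f) = \Phi_f$. The \emph{uniform} formality of the family $E(M)$, proved in \cite[Prop.~7.4]{PS-16} in the K\"{a}hler case and in \cite[Prop.~9.3]{PS-16} in the arrangement case, provides ladders of augmentation-preserving quasi-isomorphisms as in \eqref{eq:ziggyf}, compatible with the $\Phi_f$, and inducing the same isomorphism of deformation functors for every $f \in E(M)$.

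Second, I would apply \cite[Thm.~6.4]{PS-16} with $q = 1$ to the finite families $E(M)$ and $\{\Phi_f\}_{f \in E(M)}$, exactly as in the proof of Theorem \ref{thm:toptrans}. This yields a local analytic isomorphism
\[
\Hom(\pi, G)_{(1)} \isom \F(H^{\hdot}(M), \g)_{(0)}
\]
that, for each $r \ge 0$, restricts to an isomorphism of subgerms $\VV^1_r(\pi, \iota)_{(1)} \isom \RR^1_r(H^{\hdot}(M), \theta)_{(0)}$ and simultaneously, for every $f \in E(M)$, sends $f_{\sharp}^{*} \VV^1_r(\pi_f, \iota)_{(1)}$ onto $\Phi_f^{*} \RR^1_r(H^{\hdot}(M_f), \theta)_{(0)}$. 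Consequently, the local equality at $1$ of the two sides of \eqref{eq:vincldeg1} is equivalent to the local equality at $0$ of the two sides of \eqref{eq:rincldeg1}.

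Third, to promote local equality at $0$ in \eqref{eq:rincldeg1} to global equality, I would observe that the cohomological grading of $H^{\hdot}(M)$ induces a $\C^{\times}$-action of weight one on $H^1(M) \otimes \g$ that leaves invariant both $\RR^1_r(H^{\hdot}(M), \theta)$ and each $\Phi_f^{*} \RR^1_r(H^{\hdot}(M_f), \theta)$ (functoriality of $\Phi_f$ in degree one). Hence all irreducible components of both sides of \eqref{eq:rincldeg1} pass through $0$, and Lemmas \ref{lem:p1} and \ref{lem:p2} convert local equality at $0$ into global equality, and conversely.

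The main obstacle is the simultaneous compatibility required in the second step: it is uniform formality, rather than mere formality, that forces a single analytic germ isomorphism to identify the pieces $f_{\sharp}^{*} \VV^1_r(\pi_f,\iota)_{(1)}$ with $\Phi_f^{*} \RR^1_r(H^{\hdot}(M_f),\theta)_{(0)}$ coherently as $f$ ranges over $E(M)$. Individual formality of $M$ and of each $M_f$ would only give the isomorphism $\VV^1_r(\pi,\iota)_{(1)}\cong\RR^1_r(H^{\hdot}(M),\theta)_{(0)}$ via Theorem \ref{thm:b}, without the matching of the subvarieties indexed by $f$; invoking the uniform version of the formality ladder is therefore the crucial ingredient.
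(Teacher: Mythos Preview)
Your proposal is correct and follows essentially the same route as the paper: invoke uniform formality of the family $E(M)$ to apply \cite[Thm.~6.4]{PS-16} for $q=1$ (as in the proof of Theorem~\ref{thm:toptrans}), obtaining a single germ isomorphism that matches each $f_{\sharp}^{*}\VV^1_r(\pi_f,\iota)_{(1)}$ with $\Phi_f^{*}\RR^1_r(H^{\hdot}(M_f),\theta)_{(0)}$, and then use the positive-weight $\C^{\times}$-action together with Lemmas~\ref{lem:p1} and~\ref{lem:p2} to pass between local and global equality. Your closing remark correctly identifies uniform formality (as opposed to individual formality plus Theorem~\ref{thm:b}) as the key input enabling the simultaneous matching of subgerms.
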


\begin{proof} 
By the argument from the proof of Theorem \ref{thm:toptrans}, we may apply 
\cite[Thm.~6.4]{PS-16} to the families $\{ f\in E(M)\}$ 
and $\{ \Phi_f \mid f\in E(M)\}$, 
for $q=1$. We obtain in this manner a local analytic identification, 
$\VV^1_r(\pi,\iota)_{(1)} \cong \RR^1_r(H^{\hdot}(M),\theta)_{(0)}$, 
which induces similar identifications, 
$f_{\sharp}^{*} \VV^1_r (\pi_f,\iota)_{(1)} \cong 
\Phi_f^{*} \RR^1_r (H^{\hdot}(M_f),\theta)_{(0)}$, 
for all $f\in E(M)$. Hence, \eqref{eq:vincldeg1} becomes an equality 
near $1$ if and only if \eqref{eq:rincldeg1} becomes an equality near $0$. 

On the other hand, the 
$\dga$ $(H^{\hdot}(M),d=0)$ has positive weights, equal to the degrees. 
As explained in \cite[\S 9.17]{DP-ccm}, this endows the variety 
$\RR^1_r(H^{\hdot}(M),\theta)$ with positive weights.
Our claim follows then from Lemmas \ref{lem:p1} and \ref{lem:p2}. 
\end{proof}

We continue with a more detailed analysis of inclusion \eqref{eq:vincldeg1} near $1$, 
in the rank $2$ case, i.e., when $G=\SL_2(\C)$ or $\Sol_2(\C)$. In the context of 
Lemma \ref{lem:karr}, we know from \cite[Thm.~1.3]{PS-16} that in this case 
\eqref{eq:vincldeg1} holds as an equality near $1$ for any $\iota$, 
when $r=0$ or $1$. What about depth greater than $1$?

\begin{lemma}
\label{lem:rk2strict}
Let $\theta \colon \g \to \gl (V)$ be a finite-dimensional Lie 
algebra representation of $\g=\sl_2$ or $\sol_2$ having a 
non-zero vector $v\in V$ annihilated by $\g$. If $M$ is a 
quasi-K\"{a}hler manifold with the property that there is an 
$f\in \cE(M)$ with $b_1(M_f)<b_1(M)$, then there is $r>1$ 
such that inclusion \eqref{eq:rincldeg1} is strict.
\end{lemma}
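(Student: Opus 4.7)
The plan is to exhibit an explicit flat connection $\omega\in\RR^1_r(H^{\hdot}(M),\theta)$ with $r\geq 2$ that lies outside every $\Phi_g^{*}\RR^1_r(H^{\hdot}(M_g),\theta)$, $g\in E(M)$. First, the hypothesis combined with Lemma~\ref{lem:tori}(\ref{tor2}) (whose proof relies only on Theorem~\ref{thm:arapura}, valid in the quasi-K\"ahler setting) forces $b_1(M_g)<b_1(M)$ for every $g\in\cE(M)$. The candidate $\omega$ will arise as the pullback via $\Phi_f$ of a suitable non-abelian flat connection from $M_f$.

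To construct $\omega$, note that since $M_f$ is a smooth curve with $\chi(M_f)<0$, we have $b_1(M_f)\geq 2$ and the cup product $\bigwedge^2 H^1(M_f)\to H^2(M_f)$ has nontrivial kernel; pick linearly independent $\eta'_1,\eta'_2\in H^1(M_f)$ with $\eta'_1\eta'_2=0$ in $H^2(M_f)$, and $X,Y\in\g$ with $[X,Y]\neq 0$. Set $\omega'=\eta'_1\otimes X+\eta'_2\otimes Y\in\F(H^{\hdot}(M_f),\g)$ and $\omega=\Phi_f^{*}(\omega')$. Then $[\omega,\omega]=2\,\Phi_f(\eta'_1)\wedge\Phi_f(\eta'_2)\otimes[X,Y]\neq 0$ in $\bigwedge^2 H^1(M)\otimes\g$ (using that $\Phi_f$ is injective on $H^1$), which forces $\omega\notin\Phi_{f_0}^{*}\F(\bigwedge H^1(M),\g)$; and the rank one transversality $\Phi_f(H^1(M_f))\cap\Phi_g(H^1(M_g))=\{0\}$ of \cite[Thm.~C(2)]{DPS-duke} rules out $\omega\in\Phi_g^{*}\F(H^{\hdot}(M_g),\g)$ for every $g\in\cE(M)\setminus\{f\}$.

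The core step is the cohomological bound
\[
\dim H^1(H^{\hdot}(M)\otimes V,d_\omega)\;\geq\;\dim H^1(H^{\hdot}(M_f)\otimes V,d_{\omega'})+(b_1(M)-b_1(M_f))\cdot\dim V^{\g}.
\]
To prove it, fix a complement $K$ to $\Phi_f(H^1(M_f))$ in $H^1(M)$. Since $\theta(X)v=0$ for all $X\in\g$, $v\in V^{\g}$, the element $h\otimes v$ is a $d_\omega$-cocycle for every $h\in H^1(M)$. The $d_\omega$-coboundaries in $H^1(M)\otimes V$ lie in $\omega\cdot V\subseteq\Phi_f(H^1(M_f))\otimes V$, and the image of $\Phi_f^{*}$ sits there too, so both project to zero in $K\otimes V$. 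Hence the classes $[h_i\otimes v_j]$ arising from bases of $K$ and $V^{\g}$ are linearly independent modulo $\Phi_f^{*}$, and together with the injectivity of $\Phi_f^{*}$ on $H^1$ from Lemma~\ref{lem:rnat} this yields the bound. Setting $r=\dim H^1(H^{\hdot}(M_f)\otimes V,d_{\omega'})+1$, the Euler characteristic identity on $M_f$ gives $\dim H^1\geq -\chi(M_f)\dim V\geq 1$, so $r\geq 2>1$; the bound places $\omega\in\RR^1_r(H^{\hdot}(M),\theta)$, while the injectivity of $\Phi_f^{*}$ on $H^1\otimes\g$ precludes $\omega\in\Phi_f^{*}\RR^1_r$, completing the exclusion from the full union.

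The main obstacle is exactly the cohomological bound above: the essential use of the hypothesis $V^{\g}\neq 0$ is that a non-zero $\g$-fixed vector lets one promote each extra direction in $H^1(M)/\Phi_f(H^1(M_f))$ to an independent cohomology class outside the $\Phi_f^{*}$ image, a device that would collapse without a fixed vector and explains why the analog of Theorem~\ref{thm:irrtran} can fail at higher depth precisely in the presence of trivial sub-representations.
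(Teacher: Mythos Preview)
Your argument is correct and follows essentially the same path as the paper's: pull back a flat connection $\omega'\in\F(H^{\hdot}(M_f),\g)\setminus\F^1(H^{\hdot}(M_f),\g)$ to $M$, use a $\g$-fixed vector paired with a class in $H^1(M)\setminus\im H^1(f)$ to force the Aomoto $b_1$ to increase strictly, and exclude the remaining $g\in E(M)$ via rank-one transversality (for $g\in\cE(M)\setminus\{f\}$) and the abelian description of $\F(H^{\hdot}(M_{f_0}),\g)$ (for $g=f_0$). Two small remarks: the opening claim that $b_1(M_g)<b_1(M)$ for every $g\in\cE(M)$ is correct but never used afterward, and what you cite as ``injectivity of $\Phi_f^{*}$ on $H^1$ from Lemma~\ref{lem:rnat}'' is really injectivity of the induced map on Aomoto $H^1$, which follows directly from the injectivity of $\Phi_f$ in degrees $\le 1$ together with $H^0(M_f)=H^0(M)=\C$ (Lemma~\ref{lem:rnat} itself concerns embeddings of resonance varieties, not cohomology maps).
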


\begin{proof}
By \cite[Lemma 7.3]{MPPS}, there is a flat connection 
$\omega \in \F(H^{\hdot}(M_f), \g)$ which is not in 
$\F^1(H^{\hdot}(M_f), \g)$.  Set $\Omega =  \Phi_f^{*} (\omega)$. 
Clearly, $\Omega \in \F(H^{\hdot}(M), \g) \setminus \F^1(H^{\hdot}(M), \g)$,
since $H^1(f)$ is injective. 

Next, recall from \S\ref{subsec:res} that, given a finite 
$\dga$ $A$, a finite-dimensional Lie algebra representation 
$\theta \colon \g \to \gl (V)$,  and a flat connection $\omega \in \F(A,\g)$, 
there is an associated Aomoto cochain complex, 
$(A^{\hdot}\otimes V, d_{\omega})$, with differential 
$d_{\omega}^i \colon A^i\otimes V \to A^{i+1}\otimes V$.  
This gives rise to Aomoto--Betti numbers, $b_i(\omega, \theta):= 
\dim H^i(A\otimes V, d_{\omega})$.  By definition, 
$\omega \in \RR^i_k (A, \theta)$ if and only if $b_i(\omega, \theta)\ge k$. 
In the above setup, write $s=b_1(\omega, \theta)$ and $r=b_1(\Omega, \theta)$. 
By  \cite[Prop.~2.4]{MPPS}, we have that $s\ge 1$. We claim that $r>s$.

To verify the claim, let us consider the natural cochain map from \cite{DP-ccm},
\begin{equation}
\label{eq:nataom}
H^{\hdot}(f) \otimes \id_V \colon(H^{\hdot}(M_f)\otimes V, d_{\omega}) \to 
(H^{\hdot}(M)\otimes V, d_{\Omega}).
\end{equation} 
Since $H^{0}(M_f)=H^{0}(M)=\C$ and $H^1(f)$ is injective, we infer 
that $\dim \im (d_{\omega}^0)= \dim \im (d_{\Omega}^0)$.  Thus, 
we need to show that $\dim \ker (d_{\omega}^1)< \dim \ker (d_{\Omega}^1)$. 
To this end, we use our assumption that $b_1(M_f)<b_1(M)$ 
and pick a class $\eta \in H^1(M) \setminus \im H^1(f)$. 
Our hypothesis on $\theta$ yields the subspace
\begin{equation}
\label{eq:h1fsub}
H^{1}(f) \otimes \id_V ( \ker (d_{\omega}^1)) \oplus \C \cdot \eta\otimes v 
\subseteq H^1(M)\otimes V.
\end{equation}

By \eqref{eq:nataom}, we have an inclusion 
$H^{1}(f) \otimes \id_V ( \ker (d_{\omega}^1)) 
\subseteq \ker (d_{\Omega}^1)$.  If $\eta\otimes v \in \ker (d_{\Omega}^1)$, our 
claim follows. On the other hand, the property that $d_{\Omega}^1  (\eta\otimes v)=0$ 
is an immediate consequence of the fact that $\g$ annihilates $v$, by the construction 
of $d_{\Omega}$ recalled in \S\ref{subsec:res}.

Finally, we will show that 
\begin{equation}
\label{eq:omegarr}
\Omega \in \RR^1_r(H^{\hdot}(M),\theta) \setminus 
\bigcup_{g\in E(M)} \Phi_g^{*} \RR^1_r (H^{\hdot}(M_g),\theta).
\end{equation}

First, let us suppose that $\Omega \in \Phi_0^{*} \RR^1_r (H^{\hdot}(M_0),\theta)$. 
We know from Lemma \ref{lem:abf} that $\F(H^{\hdot}(M_0),\g)= \F^1(H^{\hdot}(M_0),\g)$. 
This leads to $\Omega \in \F^1(H^{\hdot}(M),\g)$, a contradiction. Next, assume that 
$\Omega \in \Phi_g^{*} \RR^1_r (H^{\hdot}(M_g),\theta)$, for some $g\in \cE(M)$. 
Consequently, 
\begin{equation}
\label{eq:omega-bis}
\Omega \in \im H^1(g)\otimes \g \cap \im H^1(f)\otimes \g,
\end{equation}
by the construction of $\Omega$.   If $g\ne f$, then $\im H^1(g)\cap \im H^1(f)=0$, by  
\cite[Thm.~C(2)]{DPS-duke}. This leads to $\Omega =0$, again a contradiction. 

Hence, $\Omega = \Phi_f^{*} (\omega')$, for some $\omega'\in \RR^1_r (H^{\hdot}(M_f),\theta)$.
Since $H^1(f)$ is injective, we infer that $\omega=\omega'$. Therefore, 
$s=b_1(\omega, \theta)=b_1(\omega', \theta)\ge r$.  This last contradiction 
completes our proof.
\end{proof}

Putting together Lemmas \ref{lem:karr} and \ref{lem:rk2strict},  we obtain the 
following theorem.

\begin{theorem}
\label{thm:depth2}
Let $M$ be either a connected, compact K\"{a}hler manifold, or the complement 
of a central complex hyperplane arrangement.  Let $\iota\colon G\to \GL(V)$ be 
a rational representation of $G=\SL_2(\C)$ or $\Sol_2(\C)$, having a non-zero 
fixed vector $v\in V^G$.  If there exists an admissible map $f\colon M\to M_f$ 
with $b_1(M_f)<b_1(M)$, then there is an integer $r>1$ such that inclusion 
\eqref{eq:vincldeg1} is strict near $1$.
\end{theorem}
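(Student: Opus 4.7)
The plan is to deduce the theorem directly by combining the two preceding lemmas, Lemma \ref{lem:karr} and Lemma \ref{lem:rk2strict}. First I would verify that both situations covered by the theorem are quasi-K\"ahler manifolds, so that Lemma \ref{lem:rk2strict} applies: a connected, compact K\"ahler manifold is quasi-K\"ahler with empty divisor, and the complement of a central complex hyperplane arrangement is a (smooth) quasi-projective, hence quasi-K\"ahler, manifold. I would also translate the hypothesis that $V^G$ contains a non-zero vector $v$ into the infinitesimal condition needed by Lemma \ref{lem:rk2strict}: since $v$ is $G$-fixed, differentiating $\iota(\exp(tX))(v)=v$ at $t=0$ yields $\theta(X)(v)=0$ for all $X\in\g$, so $v$ is annihilated by $\g$.

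With these observations in place, Lemma \ref{lem:rk2strict}, applied to the admissible map $f\colon M\to M_f$ satisfying $b_1(M_f)<b_1(M)$, produces an integer $r>1$ such that the infinitesimal inclusion \eqref{eq:rincldeg1} is strict at depth $r$. Finally, Lemma \ref{lem:karr} gives, in both the compact K\"ahler and the arrangement cases, the equivalence between equality near $1$ in \eqref{eq:vincldeg1} and equality in \eqref{eq:rincldeg1}. Applied contrapositively at depth $r$, it promotes the infinitesimal strictness just obtained to topological strictness of \eqref{eq:vincldeg1} near $1$ at the same value of $r$, which is exactly the conclusion sought.

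There is essentially no new obstacle at this stage, since the substantive work has been carried out in the two cited lemmas. Where a careful reader will want to pause is inside the proof of Lemma \ref{lem:rk2strict}, whose construction of an explicit witness is what genuinely uses the hypotheses: one takes a non-essentially-rank-one $\omega\in\F(H^{\hdot}(M_f),\g)$ (available by \cite[Lemma~7.3]{MPPS}) and sets $\Omega=\Phi_f^{*}(\omega)$, then exploits a class $\eta\in H^1(M)\setminus\im H^1(f)$ provided by $b_1(M_f)<b_1(M)$, together with the fixed vector $v$, to produce the extra Aomoto cocycle $\eta\otimes v$. The identity $d_\Omega(\eta\otimes v)=0$ forced by $\theta(X)(v)=0$ is precisely where the fixed-vector hypothesis is used, and it ensures that $b_1(\Omega,\theta)$ strictly exceeds $b_1(\omega,\theta)$, so that any $r$ in that gap separates $\Omega$ from $\bigcup_{g\in E(M)}\Phi_g^{*}\RR^1_r(H^{\hdot}(M_g),\theta)$, using the rank $1$ transversality \cite[Thm.~C(2)]{DPS-duke} to rule out pullback from maps other than $f$.
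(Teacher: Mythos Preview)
Your proposal is correct and follows exactly the paper's approach: the theorem is obtained by combining Lemmas~\ref{lem:karr} and~\ref{lem:rk2strict}, and your added verifications (that both classes of manifolds are quasi-K\"ahler, and that a nonzero $G$-fixed vector is $\g$-annihilated) are precisely what is needed to match the hypotheses. Your third paragraph is a faithful recap of the mechanism inside Lemma~\ref{lem:rk2strict} rather than new content for the theorem's proof.
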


\begin{example}
\label{ex:arrk}
Suppose $M$ is the complement of a central arrangement in $\C^3$.  There are then 
two cases to consider.  If the lines of the associated projective arrangement in 
$\CP^2$ intersect only in double points, it is well-known that the group 
$\pi=\pi_1(M)$ is free abelian. Hence, 
$\abf^{*} \colon \VV^1_r (\pi_{\abf},\iota) \to \VV^1_r (\pi_,\iota)$ 
is an isomorphism, and the inclusion \eqref{eq:vincldeg1} becomes a global equality, 
for any $\iota$ and all $r\ge 0$.  

On the other hand, if the lines in $\CP^2$ have an intersection point 
of multiplicity $m\ge 3$, we claim that Theorem \ref{thm:depth2} 
applies to $M$. Indeed,  \cite[Lem.~3.14]{Fa97} implies that 
$\RR^1_1(H^{\hdot}(M))$ has an irreducible component of dimension $m-1$. 
By \cite[Thm.~C(3)]{DPS-duke}, this component is of the form $\im (H^1(f))$, for some
$f\in \cE(M)$. Finally, $b_1(M_f)<b_1(M)$, since otherwise clearly 
$\RR^1_1(H^{\hdot}(M))=H^1(M)$, in contradiction with \cite[Thm.~2.8]{Fa97}.
\end{example}

Compact examples for Theorem \ref{thm:depth2} are also easy to construct. 

\begin{example}
\label{ex:cpt}
Let $M$ be the product $\Sigma_g \times N$, where $\Sigma_g$ 
is a projective curve of genus $g>1$ and $N$ is a projective manifold 
with $b_1(N)>0$. Plainly, the canonical projection 
$f\colon M\to \Sigma_g=M_f$ gives an element $f\in \cE(M)$ 
with $b_1(M_f)<b_1(M)$. 
\end{example}

\begin{ack}
We are grateful to Alex Dimca for very useful discussions related to the 
material from \S\ref{sect:local}.  We are also grateful to the referee for 
helpful comments and suggestions.  Some of the initial work on this paper 
was done while the second author visited the Institute of Mathematics of 
the Romanian Academy in June, 2016.  He thanks IMAR for its hospitality, 
support, and excellent research atmosphere.
\end{ack}

\newcommand{\arxiv}[1]
{\texttt{\href{http://arxiv.org/abs/#1}{arxiv:#1}}}
\newcommand{\arx}[1]
{\texttt{\href{http://arxiv.org/abs/#1}{arXiv:}}
\texttt{\href{http://arxiv.org/abs/#1}{#1}}}
\newcommand{\doi}[1]
{\texttt{\href{http://dx.doi.org/#1}{doi:#1}}}
\renewcommand{\MR}[1]
{\href{http://www.ams.org/mathscinet-getitem?mr=#1}{MR#1}}

\end{document}